\newtheorem{theorem}{Theorem}[section]
\newtheorem{lemma}[theorem]{Lemma}
\newtheorem{remark}[theorem]{Remark}
\newtheorem{definition}[theorem]{Definition}
\journal{Journal of Mathematical Analysis and Applications}
\begin{document}

 \begin{frontmatter}

 \title{Existence and Stability of Traveling Waves for a Class of Nonlocal Nonlinear Equations}

\author{H. A. Erbay$^{1}$\corref{cor1}}
    \ead{husnuata.erbay@ozyegin.edu.tr}
\author{S. Erbay$^1$}
    \ead{saadet.erbay@ozyegin.edu.tr}
\author{A. Erkip$^2$}
    \ead{albert@sabanciuniv.edu}
\cortext[cor1]{Corresponding author. Tel: +90 216 564 9489 Fax: +90 216 564 9057}

\address{$^1$ Department of Natural and Mathematical Sciences, Faculty of Engineering, Ozyegin University,  Cekmekoy 34794, Istanbul, Turkey}

\address{$^2$ Faculty of Engineering and Natural Sciences, Sabanci University,  Tuzla 34956,  Istanbul,    Turkey}

  \begin{abstract}
 In this article we are concerned with the existence and orbital stability of traveling wave solutions of a general class of nonlocal wave equations: $~u_{tt}-Lu_{xx}=B(\pm |u|^{p-1}u)_{xx}$,  $~p>1$. The main characteristic of this class of equations is the existence of two sources of dispersion, characterized by two coercive pseudo-differential operators $L$ and $B$. Members of the class arise as mathematical models for the propagation of dispersive waves in a wide variety of situations.  For instance, all Boussinesq-type equations and the so-called double-dispersion equation are members of the class.  We first establish the existence of traveling wave solutions to the nonlocal wave equations considered. We then obtain results on the orbital stability or instability of  traveling waves. For  the case  $L=I$, corresponding to a class of Klein-Gordon-type equations,  we give an almost complete characterization of  the values of the wave velocity for which the traveling waves are orbitally stable or unstable by blow-up.
 \end{abstract}

\begin{keyword}
    Solitary waves  \sep Orbital stability \sep  Boussinesq equation \sep Double dispersion equation  \sep Concentration-compactness \sep Instability by blow-up \sep Klein-Gordon equation.
    \MSC  74H20 \sep 74J30 \sep 74B20
\end{keyword}
\end{frontmatter}

\setcounter{equation}{0}
\section{Introduction}
\noindent
The present paper is concerned with the existence and stability of traveling wave solutions $u(x,t)=\phi _{c}(x-ct)$ of a general class of nonlocal nonlinear equations of the form
\begin{equation}
    u_{tt}-Lu_{xx}=B(g(u))_{xx}, ~~~x\in \mathbb{R}, ~~~t>0, \label{nonlocal}
\end{equation}%
where $c\in \mathbb{R}$ is the wave velocity, $u=u(x,t)$ is a real-valued function,  $g(u)=\pm |u|^{p-1}u$ with $p>1$, and  $L$ and $B$ are linear  pseudo-differential operators with smooth symbols $l(\xi )$ and $b(\xi )$, respectively. The orders of $L$ and $B$ will be denoted by $\rho$ and $-r$, respectively. Here, and throughout this paper, we assume that {\it (i)} $r\geq 0$,   {\it (ii)} for all $k$  the symbols $l(\xi )$ and $b(\xi )$ satisfy the decay properties
\begin{equation}
    \frac{d^{k}}{d\xi ^{k}}l(\xi )=\mathcal{O}(|\xi |^{\rho -k}),~~~~~\frac{d^{k}}{d\xi ^{k}}b(\xi )
            =\mathcal{O}(|\xi |^{-r-k})~~~~\mbox{as}~~|\xi | \rightarrow \infty,   \label{derivestimate}
\end{equation}
and  {\it (iii)} the pseudo-differential operators $L$ and $B$  are coercive elliptic operators; namely there exist positive constants $~c_{1}, c_{2}, c_{3}$ and $c_{4}~$ such that
\begin{eqnarray}
 &&   c_{1}^{2}(1+\xi ^{2})^{\rho /2}\leq l(\xi )\leq c_{2}^{2}(1+\xi ^{2})^{\rho/2}, \label{bn-a}\\
 &&  c_{3}^{2}(1+\xi ^{2})^{-r/2} \leq b(\xi )\leq c_{4}^{2}(1+\xi^{2})^{-r/2},  \label{bn-b}
\end{eqnarray}%
for all $\xi \in \mathbb{R}$. Throughout the study we assume that the above constants $c_{i}$ are chosen as the best constants. The aim of the present study is twofold: first to show the existence of  traveling wave solutions $u(x,t)= \phi_c(x-ct)$ of (\ref{nonlocal}) for the above-defined class of pseudo-differential operators $L$ and $B$, and then to investigate the orbital stability and instability of those traveling wave solutions.

Equation (\ref{nonlocal}) was first proposed in \cite{babaoglu} as a general equation governing the propagation of doubly dispersive nonlinear waves.
The local existence, global existence and blow-up results for solutions of the Cauchy problem of (\ref{nonlocal}) with initial data in suitable Sobolev spaces were provided in \cite{babaoglu}. In a recent study \cite{erbay2}, thresholds for global existence  versus blow-up were established for (\ref{nonlocal}) with power-type nonlinearities.

To illustrate the double nature of dispersion we rewrite (\ref{nonlocal}) in the form $B^{-1}u_{tt}-LB^{-1}u_{xx}=(g(u))_{xx}$, where the first and second terms on the left-hand side represent two sources of dispersive effect.  Clearly, for suitable choices of $L$ and $B$, (\ref{nonlocal}) will reduce to the well-known Boussinesq-type equations, including the Boussinesq equation \cite{bous}, the improved Boussinesq equation \cite{ost} and the double dispersion equation  \cite{samsonov, porubov} (see Section 3 of the present study and \cite{babaoglu} for further details). In recent years, higher-order effects leading to higher-order spatial derivatives become more important for   modeling dispersive wave propagation in micro-structured solids \cite{berezovski}. We also observe that  nonlocal operators with non-polynomial symbols  appear in modeling studies of nanotechnology applications \cite{narendar}. It is interesting to note that both of these models give rise to wave equations of the form (\ref{nonlocal}). Another reduction of (\ref{nonlocal}) is established considering the operator $B$ as a convolution integral
\begin{equation}
    (Bv)(x)= (\beta * v)(x)= \int_{\mathbb{R}} \beta(x-y) v(y) dy
\end{equation}
with the kernel function $\beta(x)$ and taking $L=B$. The resulting nonlocal nonlinear wave equation
\begin{equation}
    u_{tt}=[\beta * (u+g(u)) ]_{xx}
\end{equation}
describes the propagation of nonlinear strain waves in a one-dimensional, nonlocally elastic medium \cite{duruk1} (We refer the reader to \cite{erbay1, duruk2} for two different extensions of the model). Considering (\ref{nonlocal}) in such a general context provides several advantages. First, it gives results for the whole class of equation; for example our existence proof for traveling waves covers not only those given in the literature but all possible higher-order cases as well as nonlocal ones. Secondly, this generality allows us to observe behavior types, rather than particular phenomena; for instance we observe two different regimes in which traveling waves exist.

Existence and stability of traveling wave solutions of nonlinear wave equations are well studied in the literature starting from \cite{benjamin, bona1} (see \cite{pava1} for a recent overview of previous work). There have been a number of reliable existence, stability and instability results on the topic of solitary wave solutions of Boussinesq-type equations:  \cite{bona2, liu1, pego1, esfahani}. There are some studies addressing   similar issues for unidirectional nonlocal wave equations involving  pseudo-differential operators, see e.g.,   \cite{bona3, albert0, souganidis, albert1, albert2, albert3,  zeng, mats}. With specific forms of $L$ and $B$,  the same questions for the  nonlocal bidirectional wave equation  (\ref{nonlocal})    were studied in \cite{stubbe}. In that respect we should mention recent works \cite{bronski,hakkaev,stanislavova} where the authors developed a general theory to investigate spectral/linear stability of solitary wave solutions for Boussinesq-type equations similar to ours. The purpose of the present study is to investigate existence and stability properties of traveling waves for  the general class (\ref{nonlocal}). We emphasize that the present study does not require any homogeneity and similar assumptions  on the symbols $l(\xi )$ and $b(\xi )$.

 It is well known that wave velocity ranges of the solitary waves are different for the Boussinesq equation (\ref{gen-bouss}) and the improved Boussinesq equation (\ref{imp-boussinesq})  (for details, see the examples in Section 3). To summarize, the Boussinesq equation has solitary waves for small values of $c^{2}$ when $g(u)=- |u|^{p-1}u$, while the improved Boussinesq equation has solitary waves for large values of $c^{2}$ when $g(u)=|u|^{p-1}u$. In the present study, we first observe that this is a general phenomena; traveling wave solutions of the class (\ref{nonlocal}) with power nonlinearities  exist for two different regimes. In the first regime,  $c^{2}$ is small and $g(u)=- |u|^{p-1}u$ while in the second regime $c^{2}$ is large and $g(u)=|u|^{p-1}u$. Clearly, the Boussinesq equation and the improved Boussinesq equation are the most representative  and studied examples of these two regimes, respectively. In the case of power nonlinearities, $g(u)=\pm |u|^{p-1}u$, the traveling wave solutions $u=\phi _{c}(x-ct)$ of (\ref{nonlocal}) satisfy  the equation
\begin{equation}
    (L-c^{2}I)B^{-1}\phi_{c}\pm | \phi_{c} |^{p-1}\phi_{c}=0,   \label{ode}
\end{equation}
where $I$ is the identity operator. Then the order of $L$, i.e. $\rho$, is the determining parameter  in this distinction regarding (\ref{nonlocal}): for $\rho > 0$ the first regime occurs  and for $\rho < 0$ the second regime occurs. The case $\rho=0$ is of particular interest because both regimes occur. That is, when $\rho=0$,  traveling waves exist either for small $c^{2}$  and $g(u)=- |u|^{p-1}u$ or for large $c^{2}$ and $g(u)=|u|^{p-1}u$, as is observed for the double dispersion equation (\ref{gen-doubly}). In short,  $\rho$ determines the sign of $g(u)$ for which the traveling waves exist as well as the allowed values of $c$.  Therefore, in the sequel, we consider  the two regimes separately, which we will refer to shortly as the cases $\rho \geq 0$ and $\rho \leq 0$.

We first prove the existence of traveling wave solutions of (\ref{nonlocal}) for both  $\rho \geq 0$ and $\rho \leq 0$, separately. In both cases, the proof is based on a constrained variational problem, where traveling wave solutions appear as the critical points. We note that, in order to compensate for the non-homogeneity of the symbols, we use functionals that are not conserved integrals of (\ref{nonlocal}).  The concentration-compactness lemma of Lions \cite{lions1, lions2} is the main tool in establishing the existence of a minimizer of the constrained variational problem.  In the case of $\rho \geq 0$ the traveling wave solution is also a minimizer of a certain conserved quantity allowing us to go further. On the other hand, for  $\rho \leq 0$ the traveling wave solution turns out to be a saddle point and hence, as in the case of the improved Boussinesq equation, it does not allow us to get a stability result.

For orbital stability, in the case $\rho \geq 0$,  we adopt a well-known general criteria  in terms  of convexity of a certain function $d(c)$ related to conserved quantities. In particular cases of (\ref{nonlocal}), one can compute $d(c)$ explicitly, and obtain stability intervals for the wave velocity $c$. In our general case, this is not possible unless one makes further assumptions on the pseudo-differential operators $L$ and $B$. Nevertheless, we are able to show that for general $L$ and $B$ the function $d(c)$ is not convex when $c^{2}$ is sufficiently small. Moreover, for $c=0$ we further show the instability by blow-up using the blow-up threshold obtained in \cite{erbay2}.

Finally, we restrict our attention to the case  $L=I$ and general $B$. This gives rise to a class of Klein-Gordon-type equations.  We thus compute $d(c)$ explicitly, and obtain the orbital stability interval. Moreover, in this particular case, we are able to improve the blow-up result mentioned above for $c=0$ and obtain an interval of $c$ for instability by blow-up. It turns out that these two intervals complement one another. Hence,  for this class of Klein-Gordon-type equations, we have an almost complete characterization of stability/instability regions in terms of $c$. We note that our instability result is stronger than orbital instability as it involves blow-up in finite time. We believe that this is a new result even in the simpler case $B=(1-D_{x}^2)^{-1}$.

The structure of  the paper is as follows. Section 2 reviews some previously known results, including the local existence theorem and the conserved quantities for (\ref{nonlocal}). In Section 3, we start with some well-known examples that lead us to two regimes: $\rho \geq 0$  and $\rho \leq 0$. We then establish the existence of traveling wave solutions of (\ref{nonlocal}) in both regimes by introducing  constrained variational problems in a Sobolev  space setting and using the concentration-compactness lemma of Lions \cite{lions1, lions2}.  In Section 4, for the case  $\rho \geq 0$, we prove some orbital stability and instability by blow-up results for the traveling wave solutions of (\ref{nonlocal}).  In Section 5, for the case $L=I$, we provide an almost complete characterization of stability/instability regions.

The remaining part of this section is devoted to the notation that is used in the rest of the paper. Throughout the paper, the symbol $\widehat u$ represents the Fourier transform of $u$, defined by $\widehat u(\xi)=\int_\mathbb{R} u(x) e^{-i\xi x}dx$. The $L^p,~1\leq p <\infty$ and $L^{\infty}$ norms of $u$ on $\mathbb{R}$ are denoted by  $\Vert u\Vert_{L^p}$ and $ \Vert u\Vert_{L^\infty}$, respectively. The inner product of $u$ and $v$ in $L^2(\mathbb{R})$ is represented by $\langle u, v\rangle$. The $L^{2}$ Sobolev space of order $s$ on $\mathbb{R}$ is denoted by  $H^{s}=H^s(\mathbb{R})$ with the norm $\Vert u\Vert_{H^{s}}^2=\int_\mathbb{R} (1+\xi^2)^s |\widehat u(\xi)|^2 d\xi$.  The symbol $\mathbb{R}$ in $\int_{\mathbb{R}}$ will be mostly suppressed to simplify exposition. $C$ is a generic positive constant. $D_{x}$ is the partial derivative with respective to $x$.

\setcounter{equation}{0}
\section{Preliminaries: Local Existence and Conserved Quantities}
\noindent
In the study of existence and stability of traveling wave solutions of nonlinear dispersive equations both the local well-posedness theory of the inital-value problem and the conservation laws of energy and momentum play a key role. For the convenience of the reader, this section contains background material on these issues that will be used in later sections.

To make our exposition self-contained we start with the statement of the local existence theorem proved in \cite{babaoglu} for the Cauchy problem
\begin{eqnarray}
    && u_{tt}-Lu_{xx}=B(g(u))_{xx},~~~x\in \mathbb{R},~~~t>0  \label{nonlocal1} \\
    && u(x,0)=u_{0}(x),~~~u_{t}(x,0)=u_{1}(x),~~~x\in \mathbb{R} \label{nonlocal2}
\end{eqnarray}
with a general nonlinear function $g(u)\in C^{[s]+1}$ where $[.]$ denotes the ceiling function.
\begin{theorem}\label{theo2.1}\cite{babaoglu}
    Let $s>\frac{1}{2}$, $u_{0}\in H^{s}$, $u_{1}\in H^{s-1-\frac{\rho }{2}}$ and  $g\in C^{[s]+1}$. Assume that $L$ and $B$ satisfy (\ref{bn-a})-(\ref{bn-b}) with $\rho \geq 0$ and $r+\frac{\rho }{2}\geq 1$. Then, there exists some $T>0$ so that the Cauchy problem (\ref{nonlocal1})-(\ref{nonlocal2}) is locally well-posed with solution $u\in C([0,T),H^{s})\cap C^{1}([0,T),H^{s-1-\frac{\rho }{2}})$.
\end{theorem}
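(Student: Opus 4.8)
The plan is to recast (\ref{nonlocal1})--(\ref{nonlocal2}) as an equivalent Duhamel integral equation and to solve the latter by the contraction mapping principle in $C([0,T],H^{s})$. Taking the Fourier transform in $x$ turns (\ref{nonlocal1}) into the frequency-by-frequency ordinary differential equation $\widehat{u}_{tt}+\omega(\xi)^{2}\widehat{u}=-\xi^{2}b(\xi)\,\widehat{g(u)}$, in which the dispersion symbol $\omega(\xi):=|\xi|\sqrt{l(\xi)}$ is real and nonnegative by (\ref{bn-a}). Writing $\Omega$ for the Fourier multiplier with symbol $\omega(\xi)$ and applying the variation-of-parameters formula in each frequency, one arrives at the fixed-point problem
\begin{equation*}
u(t)=\cos(t\Omega)u_{0}+\frac{\sin(t\Omega)}{\Omega}\,u_{1}+\int_{0}^{t}\frac{\sin\bigl((t-\tau)\Omega\bigr)}{\Omega}\,D_{x}^{2}B\,g\bigl(u(\tau)\bigr)\,d\tau=:\Phi(u)(t).
\end{equation*}
The first step is to verify that, within the class $C([0,T],H^{s})\cap C^{1}([0,T],H^{s-1-\rho/2})$, a function solves (\ref{nonlocal1})--(\ref{nonlocal2}) if and only if it is a fixed point of $\Phi$; once the propagator estimates below are in hand this is a routine matter of differentiating the Duhamel formula twice in $t$.

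The main step is to read off the mapping properties of the three linear propagators from the symbol estimates (\ref{derivestimate})--(\ref{bn-b}) and the standing hypothesis $r+\frac{\rho}{2}\geq1$. Since $|\cos(t\omega(\xi))|\leq1$, the operator $\cos(t\Omega)$ is bounded on $H^{s}$ uniformly in $t$. From $\bigl|\sin(t\omega)/\omega\bigr|\leq\min\{t,\omega^{-1}\}$ together with $\omega(\xi)\geq c_{1}|\xi|(1+\xi^{2})^{\rho/4}$ one obtains $\bigl|\sin(t\omega(\xi))/\omega(\xi)\bigr|\leq C(1+t)(1+\xi^{2})^{-(1+\rho/2)/2}$, so that $\sin(t\Omega)/\Omega$ carries $H^{s-1-\rho/2}$ into $H^{s}$ with operator norm at most $C(1+t)$; this is precisely where the loss of $1+\frac{\rho}{2}$ derivatives carried by the datum $u_{1}$ is absorbed. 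Finally, the symbol of $\frac{\sin((t-\tau)\Omega)}{\Omega}D_{x}^{2}B$ is, in absolute value, at most $\frac{\xi^{2}b(\xi)}{\omega(\xi)}\leq\frac{c_{4}^{2}}{c_{1}}\,|\xi|\,(1+\xi^{2})^{-r/2-\rho/4}\leq\frac{c_{4}^{2}}{c_{1}}\,(1+\xi^{2})^{(1-r-\rho/2)/2}$, which is a bounded function of $\xi$ exactly because $r+\frac{\rho}{2}\geq1$; hence this operator is bounded on $H^{s}$ uniformly in $t$ and $\tau$ (it is even smoothing when $r+\frac{\rho}{2}>1$, but uniform boundedness is all the argument needs).

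The remaining step handles the nonlinearity and closes the contraction. Since $s>\frac{1}{2}$, the space $H^{s}(\mathbb{R})$ is a Banach algebra continuously embedded in $L^{\infty}(\mathbb{R})$, and together with the hypothesis $g\in C^{[s]+1}$ this gives a Moser/Schauder-type composition estimate: $u\mapsto g(u)$ (after subtracting the constant $g(0)$, which leaves $D_{x}^{2}Bg(u)$ unchanged) maps $H^{s}$ into itself and is Lipschitz on bounded sets, with constants controlled by $\Vert u\Vert_{H^{s}}$ and by $g$ on the corresponding ball. Combining this with the propagator bounds of the previous step, one checks in the usual way that for $T$ small enough (depending only on $\Vert u_{0}\Vert_{H^{s}}+\Vert u_{1}\Vert_{H^{s-1-\rho/2}}$ and on $g$) the map $\Phi$ sends the ball $\{u\in C([0,T],H^{s}):\Vert u\Vert_{C([0,T],H^{s})}\leq R\}$ into itself and is a strict contraction there, producing a unique fixed point $u\in C([0,T],H^{s})$. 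Differentiating the Duhamel formula in $t$ and using that $D_{x}^{2}B$ has symbol bounded by $c_{4}^{2}(1+\xi^{2})^{(2-r)/2}\leq c_{4}^{2}(1+\xi^{2})^{(1+\rho/2)/2}$ (again because $r+\frac{\rho}{2}\geq1$) while $\Omega$ has symbol bounded by $c_{2}(1+\xi^{2})^{(1+\rho/2)/2}$ then gives $u_{t}\in C([0,T],H^{s-1-\rho/2})$. Continuous dependence of the solution on the data follows from the same estimates combined with Gronwall's inequality.

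The step I expect to be the main obstacle is the second one: setting up a functional-analytic framework in which all three propagators act consistently between $H^{s}$ and $H^{s-1-\rho/2}$. Everything there rests on the single scaling inequality $r+\frac{\rho}{2}\geq1$, which encodes that the order-$r$ smoothing of $B$ together with the order-$\frac{\rho}{2}$ smoothing of $\Omega^{-1}$ exactly offsets the two $x$-derivatives in $(g(u))_{xx}$. Once this is secured, the algebra property of $H^{s}$, the composition estimate, the contraction, and the Gronwall argument are all routine.
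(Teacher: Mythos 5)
Your proposal is correct and follows essentially the same route as the cited proof in \cite{babaoglu}: a Duhamel/contraction-mapping argument built on the propagator with symbol $\sin\bigl(t\xi\sqrt{l(\xi)}\bigr)/\bigl(\xi\sqrt{l(\xi)}\bigr)$, which is exactly the operator $\mathcal{S}(t)$ the paper identifies in its remark following Theorem~\ref{theo2.1}, with the condition $r+\frac{\rho}{2}\geq 1$ absorbing the two spatial derivatives on the nonlinearity just as you describe. The propagator bounds, the Banach-algebra/composition estimate for $s>\frac{1}{2}$ with $g\in C^{[s]+1}$, and the bookkeeping for $u_{t}\in C([0,T),H^{s-1-\rho/2})$ all check out.
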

Before giving the conserved quantities, we make two remarks regarding Theorem \ref{theo2.1}.  First,  even though it was proved for $\rho \geq 0$ in \cite{babaoglu}, here we remark that the proof also works when $\rho >-2$. This is due to the acting semigroup
\begin{equation*}
    \mathcal{S}(t)v=\mathcal{F}^{-1}\left( \frac{\sin \left(t\xi \sqrt{l(\xi )}\right)}{\xi\sqrt{l(\xi )}}\right) \mathcal{F}v,
\end{equation*}
where $\mathcal{F}$ and $\mathcal{F}^{-1}$ are the Fourier and inverse Fourier transform operators. We note that $\rho +2$ is in fact the order of the operator $\partial_x^{2}L$. Observing that one may prove this new assertion in the same fashion as Theorem \ref{theo2.1} was proved, we leave the details to the reader. Secondly, when $\rho \leq -2$, (\ref{nonlocal1}) becomes an $H^{s}$-valued ordinary differential equation and then the local well-posedness proof of \cite{duruk1} applies. Below we state these two observations as a theorem:
\begin{theorem}\label{theo2.2}
    Let $s>\frac{1}{2}$, and  $g\in C^{[s]+1}$.
    \begin{enumerate}
    \item[(i)]  If $L$ and $B$ satisfy (\ref{bn-a})-(\ref{bn-b}) with $\rho >-2$ and $r+\frac{\rho }{2}\geq 1$, then there exists some $T>0$ so that the Cauchy problem (\ref{nonlocal1})-(\ref{nonlocal2}) is locally well-posed with solution $u\in C([0,T),H^{s})\cap C^{1}([0,T),H^{s-1-\frac{\rho}{2}})$ for initial data $u_{0}\in H^{s}$ and $u_{1}\in H^{s-1-\frac{\rho }{2}}$.
    \item[(ii)] If $L$ and $B$ satisfy (\ref{bn-a})-(\ref{bn-b}) with $~\rho \leq -2$ and $r\geq 2$,  then there exists some $T>0$ so that the Cauchy problem (\ref{nonlocal1})-(\ref{nonlocal2}) is locally well-posed with solution   $u\in C^{1}([0,T),H^{s})$  for initial data
        $u_{0}\in H^{s}$ and $u_{1}\in H^{s}$.
    \end{enumerate}
\end{theorem}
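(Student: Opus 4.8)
The plan is to handle the two parts by their natural methods. For part~(i) I would recast the Cauchy problem (\ref{nonlocal1})--(\ref{nonlocal2}) as the Duhamel integral equation associated with the dispersive group, and run a contraction-mapping argument in $C([0,T],H^{s})$, exactly as in the proof of Theorem~\ref{theo2.1} in \cite{babaoglu}; for part~(ii) I would observe that the equation is an $H^{s}$-valued second-order ODE with locally Lipschitz right-hand side and invoke Picard--Lindel\"of, exactly as in \cite{duruk1}.

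For part~(i), write $\omega(\xi)=|\xi|\sqrt{l(\xi)}$ for the dispersion relation, put $\mathcal{C}(t)=\mathcal{F}^{-1}\cos(t\,\omega(\xi))\mathcal{F}$ together with the operator $\mathcal{S}(t)$ displayed before the statement, and consider
\begin{equation*}
  u(t)=\mathcal{C}(t)u_{0}+\mathcal{S}(t)u_{1}+\int_{0}^{t}\mathcal{S}(t-\tau)\,\partial_{x}^{2}B\,g(u(\tau))\,d\tau .
\end{equation*}
The coercivity bounds (\ref{bn-a}) give $\omega(\xi)\asymp|\xi|(1+\xi^{2})^{\rho/4}$, so $|\cos(t\omega)|\le1$ makes $\mathcal{C}(t)$ bounded on each $H^{\sigma}$, uniformly for $t$ in a bounded interval, while the estimate $|\sin(t\omega)/\omega|\le\min(t,1/\omega)\le C(1+t)(1+\xi^{2})^{-(2+\rho)/4}$ shows $\mathcal{S}(t)\colon H^{\sigma}\to H^{\sigma+(2+\rho)/2}$ with the same uniformity; the apparent low-frequency singularity of $1/\omega$ near $\xi=0$ is harmless precisely because of the $\min(t,1/\omega)$ bound. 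This is where $\rho>-2$ is used: it is exactly the requirement that $\partial_{x}^{2}L$, an operator of order $\rho+2$, have positive order, so that $\mathcal{S}(t)$ genuinely smooths by $(2+\rho)/2$ derivatives. Since (\ref{bn-b}) gives $\partial_{x}^{2}B$ order $2-r$, the composition $\mathcal{S}(t-\tau)\,\partial_{x}^{2}B$ maps $H^{s}$ into $H^{s+r+\rho/2-1}\hookrightarrow H^{s}$ under $r+\frac{\rho}{2}\ge1$, and likewise $\mathcal{S}(t)\colon H^{s-1-\rho/2}\to H^{s}$. Combining these mapping properties with the Banach-algebra property of $H^{s}$ for $s>\tfrac12$ and the standard Moser-type estimate for $g\in C^{[s]+1}$, the right-hand side is a contraction on a small ball of $C([0,T],H^{s})$ for $T$ small, producing $u\in C([0,T),H^{s})$. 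Differentiating the integral equation in $t$ (the boundary term vanishes since $\mathcal{S}(0)=0$) and using $\dot{\mathcal{C}}(t)\colon H^{s}\to H^{s-1-\rho/2}$, $\dot{\mathcal{S}}(t)=\mathcal{C}(t)$, and boundedness of $\mathcal{C}(t-\tau)\,\partial_{x}^{2}B\colon H^{s}\to H^{s-2+r}\hookrightarrow H^{s-1-\rho/2}$ (again $r+\frac{\rho}{2}\ge1$) yields $u\in C^{1}([0,T),H^{s-1-\rho/2})$. None of these steps differs in substance from \cite{babaoglu}; only the admissible range of $\rho$ widens, so I would present the modified index bookkeeping and refer to \cite{babaoglu} for the routine remainder.

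For part~(ii), when $\rho\le-2$ the operator $\partial_{x}^{2}L$ has order $\rho+2\le0$ and is hence bounded on $H^{s}$, and when $r\ge2$ the operator $\partial_{x}^{2}B$ has order $2-r\le0$ and is likewise bounded on $H^{s}$. Setting $v=u_{t}$, the problem becomes the first-order system
\begin{equation*}
  \frac{d}{dt}\begin{pmatrix}u\\ v\end{pmatrix}
  =\begin{pmatrix}v\\ \partial_{x}^{2}Lu+\partial_{x}^{2}B\,g(u)\end{pmatrix},
  \qquad (u,v)\big|_{t=0}=(u_{0},u_{1}),
\end{equation*}
an ODE in the Banach space $H^{s}\times H^{s}$ whose right-hand side is locally Lipschitz: the linear part consists of bounded operators, and $u\mapsto g(u)$ is locally Lipschitz from $H^{s}$ to $H^{s}$ by the algebra property and $g\in C^{[s]+1}$, $s>\tfrac12$. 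The Picard--Lindel\"of theorem then gives a unique local solution $(u,v)\in C^{1}([0,T),H^{s}\times H^{s})$, i.e.\ $u\in C^{1}([0,T),H^{s})$ (in fact $C^{2}$), as in \cite{duruk1}.

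I do not expect an essential obstacle: the substance of the theorem is the observation that the two existing proofs go through verbatim on the enlarged parameter ranges. The only point that needs care is the index arithmetic in part~(i) --- verifying that $\mathcal{C}(t)$, $\mathcal{S}(t)$, their $t$-derivatives and $\partial_{x}^{2}B$ all map between the spaces $H^{s}$ and $H^{s-1-\rho/2}$ dictated by the statement --- and it is exactly this arithmetic that forces the hypotheses $\rho>-2$, $r+\frac{\rho}{2}\ge1$ in~(i) and $r\ge2$ in~(ii).
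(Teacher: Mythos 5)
Your proposal is correct and follows exactly the route the paper intends: the paper itself only sketches Theorem \ref{theo2.2} by pointing to the Duhamel/contraction argument of \cite{babaoglu} with the semigroup $\mathcal{S}(t)$ for part (i) and to the $H^{s}$-valued ODE argument of \cite{duruk1} for part (ii), and your index bookkeeping (the smoothing order $(2+\rho)/2$ of $\mathcal{S}(t)$, the order $2-r$ of $\partial_{x}^{2}B$, and the boundedness of $\partial_{x}^{2}L$ and $\partial_{x}^{2}B$ when $\rho\le-2$, $r\ge2$) is the precise content of the remarks the paper leaves to the reader. Nothing further is needed.
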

As it was done in \cite{erbay2}, for convenience we  rewrite  (\ref{nonlocal1}) as a system of  equations  and consider the Cauchy problem
\begin{eqnarray}
    && u_t=w_x,~~~x\in \mathbb{R},~~t>0  \label{sys1} \\
    && w_t= Lu_x+ B(g(u))_x,~~~x\in \mathbb{R},~~t>0  \label{sys2} \\
    && u(x,0)=u_{0}(x),~~~w(x,0)=w_{0}(x),~~x\in \mathbb{R}. \label{sys3}
\end{eqnarray}
Below we state the local well-posedness theorem of the Cauchy problem (\ref{sys1})-(\ref{sys3}) in terms of the pair $(u, w)$.
\begin{theorem}\label{theo2.3}
    Let $s>\frac{1}{2}$, and  $g\in C^{[s]+1}$.
    \begin{enumerate}
    \item[(i)]  If $L$ and $B$ satisfy (\ref{bn-a})-(\ref{bn-b}) with $\rho >-2$ and $r+\frac{\rho }{2}\geq 1$, then there exists some $T>0$ so that the Cauchy problem (\ref{sys1})-(\ref{sys3}) is locally well-posed with solution
        $(u,w)\in C([0,T),H^{s})\times C([0,T),H^{s-\frac{\rho}{2}})$ for initial data $(u_{0},w_{0})\in H^{s}\times H^{s-\frac{\rho }{2}}$.
    \item[(ii)] If $L$ and $B$ satisfy (\ref{bn-a})-(\ref{bn-b}) with $~\rho \leq -2$ and $r\geq 2$,  then there exists some $T>0$ so that the Cauchy problem (\ref{sys1})-(\ref{sys3}) is locally well-posed with solution
        $(u,w)\in C([0,T),H^{s})\times C([0,T),H^{s+1})$ for initial data $(u_{0},w_{0})\in H^{s}\times H^{s+1}$.
    \end{enumerate}
\end{theorem}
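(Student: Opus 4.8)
The plan is to treat the system (\ref{sys1})--(\ref{sys3}) directly, along the same lines as the proof of Theorem~\ref{theo2.1}, rather than deducing it from Theorem~\ref{theo2.2}; note, however, that the two problems are equivalent under the identification $u_{1}=\partial_{x}w_{0}$, and the deduction is also possible once one checks the regularity of the recovered component $w$ (see below). Writing $V=(u,w)^{\mathrm{T}}$, the system takes the form $\partial_{t}V=\mathcal{A}V+\mathcal{N}(V)$ with linear part $\mathcal{A}V=(w_{x},Lu_{x})^{\mathrm{T}}$ and nonlinearity $\mathcal{N}(V)=(0,\partial_{x}B(g(u)))^{\mathrm{T}}$, and the argument rests on the Duhamel formulation
\begin{equation*}
    V(t)=e^{t\mathcal{A}}V_{0}+\int_{0}^{t}e^{(t-\tau)\mathcal{A}}\mathcal{N}(V(\tau))\,d\tau ,
\end{equation*}
to which a contraction mapping argument is to be applied in $C([0,T],X)$ for the appropriate product space $X$ and $T>0$ small.

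For part (i), take $X=X_{s}:=H^{s}\times H^{s-\rho/2}$. On the Fourier side $\mathcal{A}$ has symbol $M(\xi)=\left(\begin{smallmatrix}0 & i\xi\\ i\xi\,l(\xi) & 0\end{smallmatrix}\right)$, and since $M(\xi)^{2}=-\xi^{2}l(\xi)\,I$ one gets
\begin{equation*}
    e^{tM(\xi)}=\cos\!\big(t\xi\sqrt{l(\xi)}\big)\,I+\frac{\sin\!\big(t\xi\sqrt{l(\xi)}\big)}{\xi\sqrt{l(\xi)}}\,M(\xi),
\end{equation*}
which is the semigroup $\mathcal{S}(t)$ quoted in the remark after Theorem~\ref{theo2.1}, now in first-order form. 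The one point that really uses the coercivity (\ref{bn-a}) is the boundedness of $e^{t\mathcal{A}}$ on the asymmetric space $X_{s}$: conjugating $e^{tM(\xi)}$ by the weights $((1+\xi^{2})^{s/2},(1+\xi^{2})^{(s-\rho/2)/2})$ associated with $X_{s}$, the off-diagonal entries become $i\sin(t\xi\sqrt{l(\xi)})\,(1+\xi^{2})^{\rho/4}/\sqrt{l(\xi)}$ and $i\sin(t\xi\sqrt{l(\xi)})\,\sqrt{l(\xi)}\,(1+\xi^{2})^{-\rho/4}$, and these are uniformly bounded in $\xi$ precisely because $\sqrt{l(\xi)}$ is comparable to $(1+\xi^{2})^{\rho/4}$ by (\ref{bn-a}); hence $\{e^{t\mathcal{A}}\}$ is a strongly continuous group on $X_{s}$, bounded on compact time intervals. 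Next, $\mathcal{N}\colon X_{s}\to X_{s}$ is locally Lipschitz: for $s>\tfrac{1}{2}$ the space $H^{s}$ is an algebra and, with $g\in C^{[s]+1}$, the Nemytskii map $u\mapsto g(u)$ is locally Lipschitz from $H^{s}$ to $H^{s}$, while $\partial_{x}B$ has order $1-r\le\rho/2$ by the hypothesis $r+\tfrac{\rho}{2}\ge1$ together with (\ref{bn-b}), hence maps $H^{s}$ boundedly into $H^{s-\rho/2}$. Banach's fixed point theorem then gives a unique solution $V\in C([0,T],X_{s})$ for $T$ small, which extends to a maximal interval, and the usual continuation and continuous-dependence arguments finish part (i).

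For part (ii), with $\rho\le-2$ and $r\ge2$, take $X=H^{s}\times H^{s+1}$. Here the full right-hand side $\Phi(u,w)=(w_{x},\,Lu_{x}+\partial_{x}B(g(u)))$ maps $X$ into $X$: indeed $w\in H^{s+1}$ gives $w_{x}\in H^{s}$, while $L\partial_{x}$ has order $\rho+1\le-1$ and $\partial_{x}B$ has order $1-r\le-1$, so by (\ref{bn-a})--(\ref{bn-b}) and $g(u)\in H^{s}$ both $Lu_{x}$ and $\partial_{x}B(g(u))$ lie in $H^{s+1}$. Moreover $\Phi$ is locally Lipschitz on $X$ (again by the algebra property of $H^{s}$ and $g\in C^{[s]+1}$). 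Thus (\ref{sys1})--(\ref{sys3}) is an ordinary differential equation in the Banach space $X$ with locally Lipschitz vector field, and the Cauchy--Lipschitz theorem produces a unique maximal solution $(u,w)\in C^{1}([0,T),X)\subset C([0,T),H^{s})\times C([0,T),H^{s+1})$; this is the argument used in \cite{duruk1}.

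I expect the only real obstacle to be the boundedness of $e^{t\mathcal{A}}$ on $X_{s}=H^{s}\times H^{s-\rho/2}$ in part (i); once the symbol bookkeeping above is carried out — using both the lower and the upper bound in (\ref{bn-a}) — the remaining ingredients (the Nemytskii estimate for $g$, the fixed point argument, the blow-up alternative) are routine. The split at $\rho=-2$ merely records which framework is natural: a genuine evolution semigroup when $\partial_{x}^{2}L$ has positive order, and a Banach-space ODE with bounded (in fact smoothing) vector field otherwise; the conditions $r+\tfrac{\rho}{2}\ge1$ and $r\ge2$ enter only to make $\partial_{x}B$ — and, in part (ii), also $L\partial_{x}$ — act between the two components of the phase space. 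If one instead prefers to invoke Theorem~\ref{theo2.2}, the accompanying subtlety is to check that $w(t):=w_{0}+\int_{0}^{t}\big(Lu_{x}+\partial_{x}B(g(u))\big)(\tau)\,d\tau$, which a priori lies only in $H^{s-\rho-1}$, in fact belongs to $H^{s-\rho/2}$; this holds because $\partial_{x}w(t)=u_{1}+\int_{0}^{t}u_{tt}\,d\tau=u_{t}(t)$ telescopes, so that $\widehat{w}(\xi,t)=\widehat{u_{t}}(\xi,t)/(i\xi)=\widehat{w_{0}}(\xi)+\mathcal{O}(\xi)$ near $\xi=0$ while matching the $H^{s-1-\rho/2}$-regularity of $u_{t}$ at high frequencies.
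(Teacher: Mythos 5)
Your argument is correct and follows essentially the route the paper intends but leaves to the reader: for $\rho>-2$ a Duhamel/contraction argument built on the oscillatory group with symbol involving $\sin\bigl(t\xi\sqrt{l(\xi)}\bigr)/\bigl(\xi\sqrt{l(\xi)}\bigr)$ (your first-order matrix exponential is exactly this, and your weight bookkeeping with both inequalities in (\ref{bn-a}) is the right check for boundedness on $H^{s}\times H^{s-\rho/2}$), and for $\rho\le-2$ the Banach-space ODE argument of \cite{duruk1}. Working directly with the system rather than the scalar equation is only a presentational difference, and it has the advantage of delivering the regularity of the $w$-component without the extra verification you sketch at the end.
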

\begin{remark}\label{rem2.4}
Clearly, the solution predicted by Theorem \ref{theo2.3} can be extended to the maximal time interval $[0, T_{\max})$ where $T_{\max}$, if finite, is characterized by the blow-up conditions
\begin{displaymath}
    \limsup_{t \rightarrow T_{\max}^{-}} ~\left( \left\Vert u(t) \right\Vert_{s}+\left\Vert w(t)\right\Vert_{s-{\rho\over 2}} \right)= \infty  ~~~~\mbox{in case (i)}
\end{displaymath}
and
\begin{displaymath}
    \limsup_{t \rightarrow T_{\max}^{-}} ~\left( \left\Vert u(t) \right\Vert_{s}+\left\Vert w(t)\right\Vert_{s+1} \right)= \infty  ~~~~\mbox{in case (ii)}.
\end{displaymath}
\end{remark}
\begin{remark}\label{rem2.5}
In what follows we will be considering the power nonlinearity $g(u)=\pm |u|^{p-1}u$. Clearly $g\in C^{[s]+1}$ if and only if $p\geq [s]+1$ so that Theorems \ref{theo2.1}-\ref{theo2.3} apply for power nonlinearities when $p \geq [s]+1$. This restriction will be required only  in Theorem 4.8 and Theorem 5.1 where we consider a particular solution of the Cauchy problem.
\end{remark}
\begin{remark}\label{rem2.6}
 The above restriction on the pair $s,p$ may be weakened. The condition  $g\in C^{[s]+1}$ is used to prove that the map $u \rightarrow g(u)$ is locally Lipschitz on $H^{s}$. On the other hand, we believe that the same proof can be carried out by only  imposing the local Lipschitz condition  on the $[s]$ derivative of $g$, that is, $g\in C^{[s],0}$.
\end{remark}

The laws of conservation of energy and momentum for the system (\ref{sys1})-(\ref{sys3}) with $g(u)=\pm |u|^{p-1}u$ are
\begin{eqnarray}
    \mathcal{E}(u(t), w(t))&=&\frac{1}{2} \left\Vert B^{-1/2} w(t)\right\Vert^2_{L^{2}}
                    +\frac{1}{2} \left\Vert L^{1/2}B^{-1/2}u(t)\right\Vert^2_{L^{2}}
                    \pm \frac{1}{p+1}\left\Vert u(t)\right\Vert^{p+1}_{L^{p+1}}   \nonumber \\
                &=&\mathcal{E}(u_{0}, w_{0})  \label{energy}\\
    \mathcal{M}(u(t), w(t))&=&  \int \left( B^{-1/2} u(t)\right) \left(B^{-1/2} w(t)\right) dx=\mathcal{M}(u_{0}, w_{0}), \label{momentum}
\end{eqnarray}
respectively. For the details of deriving these conservation laws we refer the reader to \cite{erbay2}.

\setcounter{equation}{0}
\section{Existence of traveling waves}
\noindent

In this section we  prove that (\ref{nonlocal}) with $g(u)=\pm |u|^{p-1}u$, $p>1$  has traveling wave solutions of the form $u(x,t)=\phi_{c}(x-ct)$ for suitable values of  wave velocity $c$ and the appropriate choice of the sign $\pm$. Assuming that $\phi_{c}$, $LB^{-1}\phi_{c}$, $ B^{-1}\phi_{c}$ and their first-order derivatives decay sufficiently rapidly at infinity, it is readily seen that $ u(x,t)=\phi_{c}(x-ct)$ satisfies (\ref{nonlocal}) if $\phi_c$ solves (\ref{ode}). We will prove the existence of solutions of  (\ref{ode}) through a constrained variational problem.

To motivate our investigation we first consider the following three classical examples. \\
{\it Example 1. (The Boussinesq Equation)} If we take $L = I-\partial_{x}^{2}$, $B = I$ (for which $\rho = 2$ and $r = 0$, respectively) and $g(u)=-|u|^{p-1}u$, then (\ref{nonlocal}) reduces to the (generalized) Boussinesq equation \cite{bous}
    \begin{equation}
    u_{tt} - u_{xx} + u_{xxxx} = -(|u|^{p-1}u)_{xx}. \label{gen-bouss}
    \end{equation}
    Solitary wave solutions to the Boussinesq equation satisfy
    \begin{equation}
    \phi_{c}^{\prime\prime}-(1-c^{2})\phi_{c}+| \phi_{c} |^{p-1}\phi_{c}=0,  \label{gen-travel}
    \end{equation}%
    where the prime represents  the derivative with respect to $\zeta = x-ct$. When $c^{2} < 1$, the explicit solution  is given by
    \begin{equation}
    \phi_{c}(\zeta)=\left[{1\over 2}(p+1)(1-c^{2})\right]^{\frac{1}{p-1}}\mbox{sech} ^{\frac{2}{p-1}}\left[{1\over 2}(p-1)(1-c^{2})^{1\over 2}\zeta \right].  \label{gen-solution}
    \end{equation}%
{\it Example 2. (The Improved Boussinesq Equation)} If we take $L =B=(I-\partial_{x}^{2})^{-1}$ (for which $\rho = -2$ and $r = 2$ ) and $g(u)=|u|^{p-1}u$, then (\ref{nonlocal}) reduces to the improved Boussinesq equation \cite{ost}
    \begin{equation}
    u_{tt} - u_{xx} - u_{xxtt} = (|u|^{p-1}u)_{xx}. \label{imp-boussinesq}
    \end{equation}
    Solitary wave solutions to the improved Boussinesq equation satisfy
    \begin{equation}
    c^{2}\phi_{c}^{\prime\prime}-(c^{2}-1)\phi_{c}+| \phi_{c} |^{p-1}\phi_{c}=0.  \label{imp-travel}
    \end{equation}%
    When $c^{2} >1$, the explicit solution  is given by
    \begin{equation}
    \phi_{c}(\zeta)=\left[{1\over 2}(p+1)(c^{2}-1)\right]^{\frac{1}{p-1}}\mbox{sech}^{\frac{2}{p-1}}\left[{1\over {2}}(p-1)(1-{1\over c^{2}})^{1\over 2}\zeta \right].  \label{imp-solution}
    \end{equation}%
{\it Example 3. (The Double Dispersion Equation)}  Let  $L = (I-a_{1}\partial_{x}^{2})^{-1}(I-a_{2}\partial_{x}^{2})$ and $B = (I-a_{1}\partial_{x}^{2})^{-1}$ for two positive constants $a_{1}$ and $a_{2}$ in which $\rho = 0$ and $r =2$.  Then (\ref{nonlocal}) reduces to the double dispersion  equation \cite{samsonov, porubov}
    \begin{equation}
    u_{tt} -u_{xx}-a_{1}u_{xxtt}+a_{2}u_{xxxx}  =  (g(u))_{xx}. \label{gen-doubly}
    \end{equation}
    Solitary wave solutions to the double dispersion equation satisfy
    \begin{equation}
    (a_{2}-a_{1}c^{2})\phi_{c}^{\prime\prime}-(1-c^{2})\phi_{c}=g(\phi_{c}).  \label{dou-travel}
    \end{equation}%
     It is worth noting that $\mbox{sech} $-type solitary wave solutions to (\ref{dou-travel}) may be obtained  in two regimes. The first regime is identified by the equations
    \begin{equation}
    c^{2} <\min \{ 1, {a_{2}\over a_{1}}\}, ~~~~~ g( \phi_{c})=-|\phi_{c}|^{p-1}\phi_{c} \label{dd-min}
    \end{equation}
    and with the solitary wave solutions
    \begin{equation}
    \phi_{c}(\zeta)=\left[{1\over 2}(p+1)(1-c^{2})\right]^{\frac{1}{p-1}}
        \mbox{sech}^{\frac{2}{p-1}}\left[{1\over 2}(p-1)\left({{1-c^{2}}\over {a_{2}-a_{1}c^{2}}}\right)^{1\over 2}\zeta \right],  \label{double-solution-a}
    \end{equation}%
    whereas the second regime is described by
    \begin{equation}
    c^{2} >\max \{ 1, {a_{2}\over a_{1}}\}, ~~~~~ g( \phi_{c})=+|\phi_{c}|^{p-1}\phi_{c}  \label{dd-max}
    \end{equation}
    and with the solitary wave solutions
     \begin{equation}
    \phi_{c}(\zeta)=\left[{1\over 2}(p+1)(c^{2}-1)\right]^{\frac{1}{p-1}}
        \mbox{sech}^{\frac{2}{p-1}}\left[{1\over {2}}(p-1)\left({{c^{2}-1}\over {a_{1}c^{2}-a_{2}}}\right)^{1\over 2}\zeta \right].  \label{double-solution-b}
    \end{equation}%
   We note that the coercivity constants of $L$ in this particular case are
    \begin{displaymath}
    c_{1}^{2} =\min \{ 1, {a_{2}\over a_{1}} \}, ~~~~  c_{2}^{2} =\max \{ 1, {a_{2}\over a_{1}} \},
    \end{displaymath}
    hence the inequalities of (\ref{dd-min}) and  (\ref{dd-max}) can be expressed as  $c^{2} <  c_{1}^{2}$ and  $c^{2} >  c_{2}^{2}$, respectively.
    Note that, in the limiting cases $(a_{1}, a_{2})=(0,1)$ or $(a_{1}, a_{2})=(1,0)$, (\ref{gen-doubly}) reduces to the Boussinesq equation or the improved Boussinesq equation, respectively. Indeed,  in those limiting cases, one of the two regimes disappears.

As the above examples show, the sign of the  order of the operator $L$  and the sign of the nonlinear term determine together the range of $c$ for which  a traveling wave solution exists. The general case of (\ref{nonlocal}) can be handled in much the same way by identifying two regimes.  We describe the two regimes characterized by the equations
\begin{equation}
    \rho \geq 0, ~~~~ g( \phi_{c})=-|\phi_{c}|^{p-1}\phi_{c},  \label{case-gen}
\end{equation}
and by
\begin{equation}
    \rho \leq 0, ~~~~ g( \phi_{c})=+|\phi_{c}|^{p-1}\phi_{c},  \label{case-imp}
\end{equation}
respectively. While the Boussinesq equation serves as a prototype equation for the  case defined  in (\ref{case-gen}), the improved Boussinesq equation provides a prototype equation for the  case (\ref{case-imp}). In the same manner, we see that the double dispersion equation for which $\rho=0$ belongs to both of the two regimes.  In the next two subsections we will prove the existence of traveling wave solutions of (\ref{nonlocal}) for the regimes defined by (\ref{case-gen}) and (\ref{case-imp}), respectively.

\noindent
\subsection{The case $\rho\geq 0$ and $g(u)=-|u|^{p-1}u$  }

Throughout this subsection we assume that we are in the regime described by (\ref{case-gen}). To satisfy the requirements imposed by Theorem  \ref{theo2.1} we also assume that $L$ and $B$ satisfy (\ref{derivestimate})-(\ref{bn-b}) with  $r+{\rho \over 2}\geq 1$ in addition to $\rho \geq 0$. Let
\begin{equation}
    s_{0}={r \over 2}+{\rho \over 2}.
\end{equation}
Note that the above inequalities imply $s_{0}\geq {1\over 2}$. For $\psi \in H^{s_{0}}$, we now define the following functionals
\begin{eqnarray}
    &&\mathcal{I}_{c}(\psi )=\frac{1}{2}\int_{\mathbb{R}}(L^{1/2}B^{-1/2}\psi)^{2}dx
                            - \frac{c^{2}}{2}\int_{\mathbb{R}}(B^{-1/2}\psi)^{2}dx
    \label{ic0} \\
    &&\mathcal{Q}(\psi )=\int_{\mathbb{R}}|\psi |^{p+1}dx.  \label{momentum-t}
\end{eqnarray}%
It is worth pointing out that they are not conserved integrals of (\ref{nonlocal}). By the Sobolev embedding theorem, we have $H^{s_{0}}\subset H^{1/2}\subset L^{q}$ for all $q\geq 2$. This insures that the functionals $\mathcal{I}_{c}(\psi)$ and $\mathcal{Q}(\psi)$ are well-defined on $H^{s_{0}}$. We also note that the space $H^{s_{0}}\times H^{s_{0}-{\rho \over 2}}$ is the natural space for the energy and momentum functionals in (\ref{energy}) and (\ref{momentum}).

We begin by proving a coercivity estimate for $\mathcal{I}_{c}(\psi)$,  which holds only for $c^{2} < c_{1}^{2}$ where $c_{1}$ is the ellipticity constant of $L$.
\begin{lemma}\label{lem3.1}
    Let $c^{2} < c_{1}^{2}$. Then there are positive constants $\gamma _{1},\gamma_{2}$ such that
    \begin{displaymath}
        \gamma_{1}\Vert \psi \Vert_{H^{s_{0}}}^{2}\leq \mathcal{I}_{c}(\psi )\leq \gamma_{2}\Vert \psi \Vert_{H^{s_{0}}}^{2}.
    \end{displaymath}
\end{lemma}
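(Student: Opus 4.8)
The plan is to work directly on the Fourier side, where $\mathcal{I}_c$ becomes a weighted $L^2$-type quadratic form. Using Plancherel's theorem and the definitions of $L^{1/2}B^{-1/2}$ and $B^{-1/2}$ as Fourier multipliers, I would write
\begin{equation*}
    \mathcal{I}_{c}(\psi)=\frac{1}{2}\int_{\mathbb{R}}\frac{l(\xi)-c^{2}}{b(\xi)}\,|\widehat{\psi}(\xi)|^{2}\,d\xi .
\end{equation*}
The whole lemma then reduces to showing that the symbol $m_c(\xi):=\bigl(l(\xi)-c^{2}\bigr)/b(\xi)$ is comparable to $(1+\xi^{2})^{s_{0}}=(1+\xi^{2})^{(r+\rho)/2}$, uniformly in $\xi$, when $c^{2}<c_{1}^{2}$.

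For the upper bound I would use the right-hand inequalities in (\ref{bn-a}) and (\ref{bn-b}): $l(\xi)-c^{2}\le l(\xi)\le c_{2}^{2}(1+\xi^{2})^{\rho/2}$ and $1/b(\xi)\le c_{3}^{-2}(1+\xi^{2})^{r/2}$ (recall $b>0$), so $m_c(\xi)\le (c_{2}^{2}/c_{3}^{2})(1+\xi^{2})^{(r+\rho)/2}$, giving $\gamma_2=\tfrac12 c_2^2/c_3^2$. For the lower bound I would use the left-hand inequalities: $1/b(\xi)\ge c_{4}^{-2}(1+\xi^{2})^{r/2}$, and for the numerator I need a lower bound of the form $l(\xi)-c^{2}\ge\delta(1+\xi^{2})^{\rho/2}$ for some $\delta>0$. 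This is where the hypothesis $c^{2}<c_{1}^{2}$ enters: since $l(\xi)\ge c_{1}^{2}(1+\xi^{2})^{\rho/2}$ and $\rho\ge 0$ so $(1+\xi^{2})^{\rho/2}\ge 1$, we get $l(\xi)-c^{2}\ge c_{1}^{2}(1+\xi^{2})^{\rho/2}-c^{2}\ge (c_{1}^{2}-c^{2})(1+\xi^{2})^{\rho/2}$, using $c^{2}=c^{2}\cdot 1\le c^{2}(1+\xi^{2})^{\rho/2}$. Hence $\delta=c_1^2-c^2>0$ and $m_c(\xi)\ge \bigl((c_{1}^{2}-c^{2})/c_{4}^{2}\bigr)(1+\xi^{2})^{(r+\rho)/2}$, giving $\gamma_1=\tfrac12 (c_1^2-c^2)/c_4^2$. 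Integrating the two-sided bound on $m_c$ against $|\widehat\psi(\xi)|^2$ and recognizing $\int(1+\xi^2)^{s_0}|\widehat\psi|^2\,d\xi=\|\psi\|_{H^{s_0}}^2$ finishes the argument.

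The only genuinely delicate point — and the one I would flag as the "main obstacle," though it is mild — is the lower bound on the numerator: one must exploit $\rho\ge 0$ (so that $(1+\xi^{2})^{\rho/2}$ is bounded below by $1$) to absorb the constant $c^{2}$ into the leading term, and one must know that $c^{2}<c_{1}^{2}$ strictly so that the resulting coefficient $c_1^2-c^2$ is positive. If $\rho=0$ the estimate is even cleaner since $l(\xi)-c^2\ge c_1^2-c^2$ directly. Everything else is a routine application of Plancherel and the coercivity bounds (\ref{bn-a})–(\ref{bn-b}), together with the observation noted in the text that $s_{0}\ge\tfrac12$ guarantees $\psi\in H^{s_0}$ makes all quantities finite.
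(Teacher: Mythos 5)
Your proof is correct and is essentially identical to the paper's: both pass to the Fourier side via Parseval, bound the symbol $(l(\xi)-c^{2})b^{-1}(\xi)$ between constant multiples of $(1+\xi^{2})^{s_{0}}$ using (\ref{bn-a})--(\ref{bn-b}), and obtain the same constants $\gamma_{1}=(c_{1}^{2}-c^{2})/(2c_{4}^{2})$ and $\gamma_{2}=c_{2}^{2}/(2c_{3}^{2})$. The point you flag about needing $\rho\geq 0$ to absorb $c^{2}$ into $(c_{1}^{2}-c^{2})(1+\xi^{2})^{\rho/2}$ is exactly the observation the paper records in Remark \ref{rem3.2}.
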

\begin{proof}
    By (\ref{bn-a}) and  (\ref{bn-b}) we have
    \begin{equation}
        (c_{1}^{2}-c^{2})(1+\xi^{2})^{\rho / 2} \leq c_{1}^{2}(1+\xi^{2})^{\rho / 2}-c^{2}
            \leq l(\xi)-c^{2}\leq c_{2}^{2}(1+\xi^{2})^{\rho / 2},  \label{uuu1}
    \end{equation}
    and
    \begin{equation}
       {1\over  {c^{2}_{4}}}(1+\xi^{2})^{r / 2} \leq b^{-1}(\xi)\leq {1\over {c^{2}_{3}}}(1+\xi^{2})^{r / 2},  \label{uuu2}
    \end{equation}
    respectively. Using Parseval's theorem for (\ref{ic0}) and combining
    \begin{displaymath}
        \mathcal{I}_{c}(\psi )=\frac{1}{2}\int \left( l(\xi)-c^{2}\right)b^{-1}(\xi) |\widehat{\psi}(\xi)|^{2}d\xi
    \end{displaymath}
    with (\ref{uuu1}) and (\ref{uuu2}) yields
    \begin{displaymath}
        {{c_{1}^{2}-c^{2}}\over {2c_{4}^{2}}}\Vert \psi \Vert_{H^{s_{0}}}^{2}\leq \mathcal{I}_{c}(\psi )
            \leq  {{c_{2}^{2}}\over {2c_{3}^{2}}}\Vert \psi \Vert_{H^{s_{0}}}^{2}.
    \end{displaymath}
\end{proof}
\begin{remark}\label{rem3.2}
The important point to note here is  that  the above proof works only under the assumption  $\rho \geq 0$.
\end{remark}
\begin{remark}\label{rem3.3}
    From Lemma \ref{lem3.1}  it follows that when $c^{2} < c^{2}_{1}$, $\sqrt{\mathcal{I}_{c}(\psi )}$ defines a norm equivalent to the $H^{s_{0}}$ norm.
\end{remark}
For $c^{2} < c^{2}_{1}$ we now consider the variational problem
\begin{equation}
    m_{1}(c)=\inf \left\{ \mathcal{I}_{c}(\psi ):\psi \in H^{s_{0}},~~\mathcal{Q}(\psi )=1\right\}.  \label{m1}
\end{equation}%
A sequence $\{\psi _{n}\}$ in $H^{s_{0}}$ is called a minimizing sequence for $m_{1}(c)$, if $\mathcal{Q}(\psi _{n})=1$ for all $n$ and $\lim\limits_{n\rightarrow \infty }\mathcal{I}_{c}(\psi_{n})=m_{1}(c)$. Let $\{\tilde{\psi}_{n}\}$ be a sequence in $H^{s_{0}}$ such that
 $\lim\limits_{n\rightarrow \infty }\mathcal{I}_{c}(\tilde{\psi}_{n})=m_{1}(c)$ and $\mathcal{Q}(\tilde{\psi}_{n})=\lambda_{n}$ with $\lim\limits_{n\rightarrow \infty }\lambda_{n}=1$. Then $\psi _{n}=\lambda _{n}^{-1/(p+1)}\tilde{\psi}_{n}$ will be a minimizing sequence and the sequences $\{\psi _{n}\}$ and $\{\tilde{\psi}_{n}\}$ have the same limiting behavior.  We will henceforth abuse the terminology and refer also to $\{\tilde{\psi}_{n}\}$ as a minimizing sequence.

We emphasize  here two aspects of the variational problem. First, $m_{1}(c)>0$. Since $\mathcal{Q}(\psi)=\Vert \psi \Vert_{L^{p+1}}^{p+1}=1$, we have $1= \Vert \psi\Vert_{L^{p+1}}^{p+1} \leq C \Vert \psi\Vert_{H^{s_{0}}}^{p+1}$  where $C$ is the Sobolev embedding constant. By Lemma \ref{lem3.1}, $\mathcal{I}_{c}(\psi )\geq \gamma_{1}\Vert \psi\Vert_{H^{s_{0}}}^{2}\geq \gamma_{1}C^{-1}>0$ so that $m_{1}(c)>0$. Second, note that a minimizing sequence $\{\psi _{n}\}$ is always bounded in $H^{s_{0}}$. This is a direct consequence of  $\Vert \psi_{n} \Vert_{H^{s_{0}}}^{2}\leq \gamma_{1}^{-1}\mathcal{I}_{c}(\psi_{n} )$ together with the fact that $\mathcal{I}_{c}(\psi_{n} )$ is convergent.

The main results of this subsection are Theorem \ref{theo3.11} establishing the existence of minimizers of (\ref{m1}) and Theorem \ref{theo3.13} showing that the minimizers are in fact traveling wave solutions of (\ref{nonlocal}). The rest of this section will be devoted mainly to the proof of Theorem \ref{theo3.11}, which is based on the Concentration Compactness Lemma of Lions \cite{lions1, lions2} given below.
\begin{lemma}\label{lem3.4} (Concentration Compactness Lemma)
    Let $\{\rho_n\}$ be a sequence of nonnegative functions in $L^{1}$ satisfying $\int \rho_n(x) dx=\mu$ for all $n$ and some $\mu>0$. Then there is a subsequence $\rho_{n_k}$ satisfying one of the following conditions:
    \begin{enumerate}
    \item[(i)] (Compactness) There are real numbers $y_k$ for $k=1,2,\cdots$, such that for any $\epsilon>0$, there is a $R>0$ large enough that
        \begin{displaymath}
            \int_{|x-y_k| \leq R} \rho_{n_k}(x) dx \geq \mu-\epsilon.
        \end{displaymath}
    \item[(ii)] (Vanishing) For any $R>0$, $\lim\limits_{k\rightarrow \infty} \sup\limits_{y\in \mathbb{R}}\int_{|x-y| \leq R} \rho_{n_k}(x) dx=0. $
    \item[(iii)] (Dichotomy) There exists $\tilde{\mu} \in (0,\mu)$ such that for any $\epsilon>0$, there exists $k_0\geq 1$, and $\rho^1_k$, $\rho^2_k\geq 0$ such that for $k\geq k_0$
        \begin{eqnarray*}
        && \|\rho_{n_k}-(\rho_k^1+\rho_k^2)\|_{L^{1}} \leq \epsilon, \\
        && \left| \int \rho_k^1(x) dx-\tilde{\mu} \right|\leq \epsilon,~~~~~ \left| \int \rho_k^2(x) dx-(\mu-\tilde{\mu}) \right|\leq \epsilon,~~~ \\
        && \mbox{supp}~ \rho_k^1\cap \mbox{supp}~ \rho_k^2 =\emptyset,~~~~~ \mbox{dist}\{ \mbox{supp}~ \rho_k^1, \mbox{supp}~ \rho_k^2\}\rightarrow \infty~    \mbox{as}~k\rightarrow \infty.
        \end{eqnarray*}
    \end{enumerate}
\end{lemma}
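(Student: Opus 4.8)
The plan is to follow Lions' original argument, the engine of which is the family of Lévy concentration functions. For each $n$ I would introduce the nondecreasing function
\[
   Q_n(R)=\sup_{y\in\mathbb{R}}\int_{|x-y|\leq R}\rho_n(x)\,dx,\qquad R\geq 0,
\]
which satisfies $0\leq Q_n(R)\leq\mu$. Since $\{Q_n\}$ is a uniformly bounded family of monotone functions, Helly's selection theorem provides a subsequence $\{Q_{n_k}\}$ converging pointwise on $[0,\infty)$ to some nondecreasing $Q$ with $0\leq Q\leq\mu$. Put $\tilde\mu=\lim_{R\to\infty}Q(R)\in[0,\mu]$. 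The trichotomy in the statement then corresponds precisely to the three mutually exclusive possibilities $\tilde\mu=0$, $\tilde\mu=\mu$ and $0<\tilde\mu<\mu$.

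The vanishing case $\tilde\mu=0$ is immediate: monotonicity forces $Q(R)=0$ for every $R$, hence $\lim_k Q_{n_k}(R)=0$ for each fixed $R$, which is exactly (ii). For the compactness case $\tilde\mu=\mu$, given $\epsilon>0$ I would choose $R_\epsilon$ with $Q(R_\epsilon)>\mu-\epsilon$; then for $k$ large $Q_{n_k}(R_\epsilon)>\mu-\epsilon$, so there is a center $y_k$ with $\int_{|x-y_k|\leq R_\epsilon}\rho_{n_k}>\mu-\epsilon$. The one fussy point is that these centers a priori depend on $\epsilon$; to get a single sequence $\{y_k\}$ valid for all $\epsilon$ one uses the elementary fact that two balls each carrying more than half of the total mass $\mu$ must intersect, so centers belonging to different values of $\epsilon$ (at a fixed $k$) stay within a bounded distance, and one may fix $y_k$ once and then enlarge the radius. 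This yields (i).

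The dichotomy case $0<\tilde\mu<\mu$ is the heart of the matter. Fix $\epsilon>0$ and choose $R$ with $Q(R)>\tilde\mu-\epsilon$, so for large $k$ there is $y_k$ with $\int_{|x-y_k|\leq R}\rho_{n_k}>\tilde\mu-\epsilon$. The delicate step is to select radii $R_k\to\infty$ that still keep the accumulated mass close to $\tilde\mu$: since $Q(m)\leq\tilde\mu<\tilde\mu+\epsilon$ for every integer $m$ and $Q_{n_k}(m)\to Q(m)$, a diagonal argument yields a nondecreasing sequence $R_k\to\infty$ with $Q_{n_k}(R_k)<\tilde\mu+\epsilon$ for all large $k$, hence $\int_{|x-y_k|\leq R_k}\rho_{n_k}<\tilde\mu+\epsilon$ as well. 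Setting $\rho_k^1=\rho_{n_k}\,\mathbf 1_{\{|x-y_k|\leq R\}}$ and $\rho_k^2=\rho_{n_k}\,\mathbf 1_{\{|x-y_k|\geq R_k\}}$, I would then check directly that $\|\rho_{n_k}-(\rho_k^1+\rho_k^2)\|_{L^1}=\int_{R<|x-y_k|<R_k}\rho_{n_k}<2\epsilon$, that $\int\rho_k^1$ and $\int\rho_k^2$ lie within $\epsilon$ of $\tilde\mu$ and $\mu-\tilde\mu$ respectively, and that the two supports are disjoint with separation at least $R_k-R\to\infty$; replacing $\epsilon$ by $\epsilon/2$ throughout gives (iii) verbatim. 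I expect this last case — in particular the simultaneous handling of the fixed inner radius $R$ and the growing outer radius $R_k$ through the diagonal extraction, and to a lesser extent the reconciliation of centers in the compactness case — to be the only genuine obstacle; everything else is routine measure theory.
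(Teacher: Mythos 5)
Your sketch is the standard proof of Lions' concentration-compactness lemma via the L\'evy concentration functions $Q_n(R)=\sup_y\int_{|x-y|\leq R}\rho_n$, Helly's selection theorem, and the trichotomy on $\tilde\mu=\lim_{R\to\infty}Q(R)$; all the delicate points you flag (reconciling centers when $\tilde\mu=\mu$ via the two-half-mass-balls-must-intersect observation, and the diagonal extraction of $R_k\to\infty$ with $Q_{n_k}(R_k)<\tilde\mu+\epsilon$ in the dichotomy case) are handled correctly. The paper itself gives no proof of this lemma --- it is quoted as a known result from Lions \cite{lions1,lions2} --- and your argument is precisely the one in that cited source, so there is nothing to compare beyond noting that your reconstruction is sound.
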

\begin{remark}\label{rem3.5}
Lemma \ref{lem3.4} also holds under the weaker condition $\lim_{n \rightarrow \infty} \int \rho_{n}(x) dx = \mu$  for some $\mu >0$.
\end{remark}
For later analysis, it will be convenient to express the functional $\mathcal{I}_{c}$ in the form
\begin{displaymath}
    \mathcal{I}_{c}(\psi )={1\over 2}\Vert K_{c}\psi \Vert_{L^{2}}^{2}+{1\over 2}\gamma_{c}\Vert \psi \Vert_{L^{2}}^{2}
\end{displaymath}
where $K_{c}$ is a suitable coercive operator with the symbol $k_{c}(\xi)$ and $\gamma_{c}$ is a positive constant. This is equivalent to saying that
\begin{displaymath}
    (L-c^{2}I)B^{-1}=K_{c}^{2}+\gamma_{c}I
\end{displaymath}
or, in terms of the symbols $ \left(l(\xi)-c^{2}\right)b^{-1}(\xi)=k_{c}^{2}(\xi)+\gamma_{c}$.
By (\ref{uuu1}) and (\ref{uuu2}) it is obvious that $(l(\xi)-c^{2})b^{-1}(\xi)\geq (c_{1}^{2}-c^{2})c_{4}^{-2}$. So taking $\gamma_{c}=(c_{1}^{2}-c^{2})/(2c_{4}^{2})$ we get
\begin{displaymath}
    k_{c}^{2}(\xi)=(l(\xi)-c^{2})b^{-1}(\xi)-{{c_{1}^{2}-c^{2}}\over {2c_{4}^{2}}}\geq {{c_{1}^{2}-c^{2}}\over {2c_{4}^{2}}}.
\end{displaymath}
Clearly $K_{c}$ is a pseudo-differential operator of order $s_{0}$, exhibiting decay properties similar to those in (\ref{derivestimate}).

Let the sequence $\{\rho_{n}(x)\}$ be defined by
\begin{displaymath}
    \rho_{n}(x)={1\over 2} \vert K_{c}\psi_{n}(x)\vert^{2} +{1\over 2}\gamma_{c} \vert \psi_{n}(x) \vert^{2}
\end{displaymath}
for a minimizing sequence $\{\psi _{n}\}$.  By the definition of a minimizing sequence we have $\lim_{n \rightarrow \infty} \int \rho_{n}dx=m_{1}(c)>0$. In what follows, we will apply the concentration-compactness principle of Lions to the above-defined sequence $\rho_{n}$. We follow the classical approach and show that neither vanishing nor dichotomy holds. To this end, we have divided our task into a sequence of lemmas. To rule out vanishing we will use the following lemma \cite{pava1} (pp 125), which is a variant of Lemma I.1 in \cite{lions2}:
\begin{lemma}\label{lem3.6}
    Suppose $\alpha >0$ and $\delta >0 $ are given. Then there exists $\eta=\eta(\alpha, \delta)>0$ such that if $f_{n}\in H^{1/2}$ with $\Vert f_{n} \Vert_{H^{1/2}}\leq \alpha$ and $\Vert f_{n} \Vert_{L^{p+1}}\geq \delta$, then
    \begin{displaymath}
    \lim_{n \rightarrow \infty}\sup_{y\in\mathbb{R}} \int_{y-2}^{y+2} \mid f_n(x) \mid^{p+1} dx \geq \eta.
    \end{displaymath}
\end{lemma}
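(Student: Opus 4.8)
The plan is to reduce the lemma to a single ``global from local'' estimate: there is a constant $C=C(p)>0$ such that
\[
    \Vert f\Vert_{L^{p+1}(\mathbb{R})}^{p+1}\;\le\; C\,\Vert f\Vert_{H^{1/2}}^{2}\,
    \Big(\sup_{y\in\mathbb{R}}\int_{y-2}^{y+2}|f(x)|^{p+1}\,dx\Big)^{(p-1)/(p+1)}
\]
for every $f\in H^{1/2}(\mathbb{R})$. Granting this, the lemma is immediate: applying it to $f=f_{n}$ together with $\Vert f_{n}\Vert_{H^{1/2}}\le\alpha$ and $\Vert f_{n}\Vert_{L^{p+1}}\ge\delta$ gives $\delta^{p+1}\le C\alpha^{2}\big(\sup_{y}\int_{y-2}^{y+2}|f_{n}|^{p+1}\big)^{(p-1)/(p+1)}$, hence $\sup_{y}\int_{y-2}^{y+2}|f_{n}|^{p+1}\ge\big(\delta^{p+1}/(C\alpha^{2})\big)^{(p+1)/(p-1)}=:\eta(\alpha,\delta)>0$ for \emph{every} $n$ (with $C$, and hence $\eta$, depending also on the fixed exponent $p$). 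In particular this lower bound persists as $n\to\infty$, and since it already holds termwise it is irrelevant whether the $\lim$ in the statement is read as a genuine limit or as a $\liminf$.

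To prove the estimate I would cover $\mathbb{R}$ by the intervals $Q_{j}=(2j-2,\,2j+2)$, $j\in\mathbb{Z}$, each of length $4$ and of the form $(y-2,y+2)$, so that $\bigcup_{j}Q_{j}=\mathbb{R}$ while every point of $\mathbb{R}$ lies in at most two of the $Q_{j}$ and every pair at distance less than $4$ lies in at most two of the products $Q_{j}\times Q_{j}$. On a fixed interval the one-dimensional Sobolev embedding in the borderline case $sq=n$, namely $H^{1/2}(Q_{j})\hookrightarrow L^{q}(Q_{j})$ for every $q<\infty$, gives $\Vert f\Vert_{L^{p+1}(Q_{j})}^{2}\le C\Vert f\Vert_{H^{1/2}(Q_{j})}^{2}$ with $C$ independent of $j$ by translation invariance, where $\Vert f\Vert_{H^{1/2}(Q_{j})}^{2}=\Vert f\Vert_{L^{2}(Q_{j})}^{2}+\iint_{Q_{j}\times Q_{j}}|f(x)-f(y)|^{2}|x-y|^{-2}\,dx\,dy$ is the intrinsic Gagliardo norm. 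Splitting $\Vert f\Vert_{L^{p+1}(Q_{j})}^{p+1}=\Vert f\Vert_{L^{p+1}(Q_{j})}^{p-1}\,\Vert f\Vert_{L^{p+1}(Q_{j})}^{2}$, bounding the last factor by the embedding and the penultimate one by $\big(\sup_{k}\Vert f\Vert_{L^{p+1}(Q_{k})}\big)^{p-1}$, and summing over $j$ (using that the $Q_{j}$ cover $\mathbb{R}$ and $|f|^{p+1}\ge0$) yields
\[
    \Vert f\Vert_{L^{p+1}(\mathbb{R})}^{p+1}\;\le\;\sum_{j}\Vert f\Vert_{L^{p+1}(Q_{j})}^{p+1}
    \;\le\; C\Big(\sup_{k}\Vert f\Vert_{L^{p+1}(Q_{k})}\Big)^{p-1}\sum_{j}\Vert f\Vert_{H^{1/2}(Q_{j})}^{2}.
\]

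It then remains to show $\sum_{j}\Vert f\Vert_{H^{1/2}(Q_{j})}^{2}\le C\Vert f\Vert_{H^{1/2}(\mathbb{R})}^{2}$. The $L^{2}$ pieces sum to at most $2\Vert f\Vert_{L^{2}}^{2}$ by bounded overlap; for the Gagliardo pieces, since $(x,y)\in Q_{j}\times Q_{j}$ forces $|x-y|<4$ and each such pair belongs to at most two of the $Q_{j}\times Q_{j}$, one gets $\sum_{j}\iint_{Q_{j}\times Q_{j}}\frac{|f(x)-f(y)|^{2}}{|x-y|^{2}}\le 2\iint_{\mathbb{R}^{2}}\frac{|f(x)-f(y)|^{2}}{|x-y|^{2}}\le C\Vert f\Vert_{H^{1/2}(\mathbb{R})}^{2}$. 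Feeding this into the previous display, and rewriting the prefactor as $\big(\sup_{k}\Vert f\Vert_{L^{p+1}(Q_{k})}\big)^{p-1}=\big(\sup_{k}\Vert f\Vert_{L^{p+1}(Q_{k})}^{p+1}\big)^{(p-1)/(p+1)}\le\big(\sup_{y}\int_{y-2}^{y+2}|f|^{p+1}\big)^{(p-1)/(p+1)}$, produces the claimed estimate. The genuinely delicate point is precisely this last step — controlling the \emph{nonlocal} Gagliardo seminorm once it is dissected along the cover — since the seminorm does not localize additively; everything else (uniformity of the borderline Sobolev constant over translates of a fixed interval, the bounded-overlap bookkeeping, the elementary interpolation split) is routine. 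As an alternative one could invoke the fractional form of Lions' vanishing lemma (a variant of Lemma I.1 in \cite{lions2}) with exponent $2$ to conclude $f_{n}\to0$ in $L^{p+1}$ under the vanishing hypothesis, which is the contrapositive of the assertion; but the self-contained covering argument above seems the more transparent route.
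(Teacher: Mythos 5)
Your proof is correct. Note first that the paper itself does not prove this lemma at all: it is quoted verbatim from Angulo Pava's monograph (p.~125), as a variant of Lemma~I.1 of Lions, so there is no in-paper argument to compare against. What you have written is essentially the standard proof of that cited result, and it is sound. The reduction to the quantitative estimate $\Vert f\Vert_{L^{p+1}}^{p+1}\le C\Vert f\Vert_{H^{1/2}}^{2}\bigl(\sup_{y}\int_{y-2}^{y+2}|f|^{p+1}\bigr)^{(p-1)/(p+1)}$ is the right move (it even gives the bound termwise in $n$, which is stronger than the stated $\lim$/$\liminf$ form and sidesteps the question of whether the limit exists), and the covering argument goes through: the bounded overlap of the $Q_{j}$ handles the $L^{2}$ pieces, the bounded overlap of the diagonal blocks $Q_{j}\times Q_{j}$ inside $\{|x-y|<4\}$ handles the Gagliardo pieces (here nonnegativity of the integrand is what saves you from the non-additivity of the seminorm, exactly as you say), and the identity $\iint_{\mathbb{R}^{2}}|f(x)-f(y)|^{2}|x-y|^{-2}\,dx\,dy=c\int|\xi|\,|\widehat f(\xi)|^{2}\,d\xi$ converts the result back to the Fourier-defined $H^{1/2}$ norm used in the hypothesis. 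The only points a careful referee would ask you to expand are routine: that a fixed-length interval is an extension domain so that the borderline embedding $H^{1/2}(Q_{j})\hookrightarrow L^{p+1}(Q_{j})$ holds with the intrinsic Gagliardo norm and a translation-invariant constant, and that $p>1$ is needed for the interpolation split $\Vert f\Vert^{p+1}=\Vert f\Vert^{p-1}\Vert f\Vert^{2}$ to point in the right direction (it is, since $p>1$ is a standing assumption of the paper). Your final sentence correctly identifies the alternative (quoting the fractional Lions vanishing lemma directly), which is precisely what the authors in fact do.
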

We can now state and prove the following.
\begin{lemma}\label{lem3.7}
    Vanishing does not occur.
\end{lemma}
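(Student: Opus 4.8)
The plan is to argue by contradiction, ruling out the vanishing alternative (ii) of Lemma \ref{lem3.4} for the subsequence $\{\rho_{n_k}\}$ it produces. I will use two properties of the minimizing sequence $\{\psi_n\}$ recorded above: it is bounded in $H^{s_{0}}$, hence (since $s_{0}\geq \frac12$) bounded in $H^{1/2}$, say $\Vert\psi_n\Vert_{H^{1/2}}\leq M$; and it is normalized, $\Vert\psi_n\Vert_{L^{p+1}}=1$. These are exactly the hypotheses of Lemma \ref{lem3.6}, applied to $\{\psi_{n_k}\}$ with $\alpha=M$ and $\delta=1$, which therefore furnishes a constant $\eta=\eta(M,p)>0$ with
\[
\lim_{k\to\infty}\ \sup_{y\in\mathbb{R}}\int_{y-2}^{y+2}|\psi_{n_k}(x)|^{p+1}\,dx\ \geq\ \eta\ >\ 0 .
\]
The whole point is then to show that vanishing of $\{\rho_{n_k}\}$ contradicts this lower bound.

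The mechanism is a local $L^{2}$-to-$L^{p+1}$ interpolation. Since $\rho_n(x)=\frac12|K_{c}\psi_n(x)|^{2}+\frac12\gamma_{c}|\psi_n(x)|^{2}\geq \frac{\gamma_{c}}{2}|\psi_n(x)|^{2}$ pointwise and $\gamma_{c}>0$, the vanishing hypothesis with $R=2$ gives
\[
\sup_{y\in\mathbb{R}}\int_{y-2}^{y+2}|\psi_{n_k}(x)|^{2}\,dx\ \leq\ \frac{2}{\gamma_{c}}\,\sup_{y\in\mathbb{R}}\int_{|x-y|\leq 2}\rho_{n_k}(x)\,dx\ \longrightarrow\ 0 .
\]
(Only the $L^{2}$-part of $\rho_n$ is used here; the $K_{c}$-term will be needed later, to exclude dichotomy.) Now fix $q$ with $p+1<q<\infty$ and let $\theta\in(0,1)$ solve $\frac{1}{p+1}=\frac{1-\theta}{2}+\frac{\theta}{q}$. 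For every $y$, the interpolation inequality for $L^{p}$-norms on the interval $I_{y}=(y-2,y+2)$ together with the global Sobolev embedding $H^{1/2}(\mathbb{R})\hookrightarrow L^{q}(\mathbb{R})$ yields
\[
\int_{y-2}^{y+2}|\psi_{n_k}|^{p+1}\,dx\ \leq\ \Vert\psi_{n_k}\Vert_{L^{2}(I_{y})}^{(1-\theta)(p+1)}\,\Vert\psi_{n_k}\Vert_{L^{q}(\mathbb{R})}^{\theta(p+1)}\ \leq\ C\,M^{\theta(p+1)}\,\Vert\psi_{n_k}\Vert_{L^{2}(I_{y})}^{(1-\theta)(p+1)},
\]
with $C$ independent of $y$ by translation invariance of $I_{y}$. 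Taking the supremum over $y$ and letting $k\to\infty$, the right-hand side tends to $0$ because $(1-\theta)(p+1)>0$; this contradicts the lower bound from Lemma \ref{lem3.6}, so vanishing cannot occur.

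Beyond this there is little to do: the argument is standard once Lemma \ref{lem3.6} is in hand. The only points demanding a modicum of care are that the Sobolev and interpolation constants be chosen uniformly in the translation parameter $y$ (immediate from translation invariance), and that Lemma \ref{lem3.6} be invoked along the very subsequence $\{n_k\}$ singled out by Lemma \ref{lem3.4}. I do not anticipate a genuine obstacle in this lemma; the substantive step of the concentration--compactness analysis is the exclusion of dichotomy, carried out in the lemmas that follow.
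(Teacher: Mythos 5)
Your proof is correct and follows essentially the same route as the paper: contradiction via Lemma \ref{lem3.6}, passing from vanishing of $\rho_{n_k}$ to vanishing of the local $L^{2}$ masses through $\rho_{n}\geq\frac{\gamma_{c}}{2}|\psi_{n}|^{2}$, and then a local $L^{2}$--$L^{q}$ interpolation with the Sobolev bound $H^{1/2}\hookrightarrow L^{q}$. The paper's Cauchy--Schwarz step $\int_{I_y}|\psi|^{p+1}\leq\Vert\psi\Vert_{L^{2p}}^{p}\Vert\psi\Vert_{L^{2}(I_y)}$ is just your interpolation with the particular choice $q=2p$, so the two arguments are the same in substance.
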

\begin{proof}
    We proceed by contradiction and assume that  vanishing occurs. Then
    \begin{displaymath}
    \lim_{k \rightarrow \infty}\sup_{y\in\mathbb{R}} \int_{y-2}^{y+2} \mid \psi_{n_{k}}(x) \mid^{2} dx =0.
    \end{displaymath}
    Since $\psi_{n_{k}}$ is bounded in $H^{s_{0}}\subset H^{1/2}$, we have $\Vert  \psi_{n_{k}} \Vert_{H^{1/2}} \leq \alpha$   and $\Vert \psi_{n_{k}} \Vert_{L^{p+1}}=1$. It follows from Lemma \ref{lem3.6} that there is some $\eta >0$ for which
     \begin{displaymath}
   \lim_{k \rightarrow \infty} \sup_{y\in\mathbb{R}} \int_{y-2}^{y+2} \mid \psi_{n_{k}}(x) \mid^{p+1} dx \geq \eta.
    \end{displaymath}
    On the other hand,
    \begin{eqnarray*}
    \left( \int_{y-2}^{y+2} \mid \psi_{n_{k}}(x) \mid^{p+1} dx \right)^{2}
     & \leq & \left(\int_{y-2}^{y+2} \mid \psi_{n_{k}}(x) \mid^{2p} dx \right)   \left(\int_{y-2}^{y+2} \mid \psi_{n_{k}}(x) \mid^{2} dx \right)\\
     & \leq &   \Vert  \psi_{n_{k}} \Vert_{L^{2p}}^{2p} \int_{y-2}^{y+2} \mid \psi_{n_{k}}(x) \mid^{2} dx  \\
     & \leq &   C \Vert  \psi_{n_{k}} \Vert_{H^{1/2}}^{2p} \int_{y-2}^{y+2} \mid \psi_{n_{k}}(x) \mid^{2} dx  \\
     & \leq &   C \alpha^{2p} \int_{y-2}^{y+2} \mid \psi_{n_{k}}(x) \mid^{2} dx,
    \end{eqnarray*}
    which implies
    \begin{displaymath}
     \eta^{2} \leq \lim_{k \rightarrow \infty}\sup_{y\in\mathbb{R}} \left(\int_{y-2}^{y+2} \mid \psi_{n_{k}}(x) \mid^{p+1} dx\right)^{2} \leq
     C \alpha^{2p} \lim_{k \rightarrow \infty}\sup_{y\in\mathbb{R}} \int_{y-2}^{y+2} \mid \psi_{n_{k}}(x) \mid^{2} dx.
    \end{displaymath}
   This contradicts our assumption.
\end{proof}

To prove that dichotomy does not occur, it is convenient to define the family of variational problems
\begin{equation}
    m_{\lambda}(c)=\inf \left\{ \mathcal{I}_{c}(\phi ):\phi \in H^{s_{0}},~~\mathcal{Q}(\phi )=\lambda \right\}  \label{var2}
\end{equation}%
where $\lambda > 0$. Note that as $\mathcal{I}_{c}$ and $\mathcal{Q}$ are homogeneous of degrees 2 and $p+1$, respectively, we have the scaling $m_{\lambda}(c) = \lambda^{2\over {p+1}}m_{1}(c)$. Moreover, since $g(\eta) = \eta^{2\over {p+1}}+(1-\eta)^{2\over {p+1}}>1$ for all $\eta \in \left(0, 1\right)$, we obtain the strict subadditivity condition of $m_{\lambda}(c)$ described in the following lemma:
\begin{lemma}\label{lem3.8}
    For any $\lambda \in \left(0, 1\right)$,
    \begin{displaymath}
    m_{\lambda}(c)+m_{1-\lambda}(c)>m_{1}(c).
    \end{displaymath}
\end{lemma}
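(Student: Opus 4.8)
\textbf{Proof proposal for Lemma~\ref{lem3.8}.}

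The plan is to exploit the homogeneity-induced scaling identity $m_{\lambda}(c)=\lambda^{2/(p+1)}m_{1}(c)$ that was just recorded before the statement. Since $\mathcal{I}_{c}$ is homogeneous of degree $2$ and $\mathcal{Q}$ is homogeneous of degree $p+1$, if $\phi$ satisfies $\mathcal{Q}(\phi)=1$ then $\mu^{1/(p+1)}\phi$ satisfies $\mathcal{Q}(\mu^{1/(p+1)}\phi)=\mu$ and $\mathcal{I}_{c}(\mu^{1/(p+1)}\phi)=\mu^{2/(p+1)}\mathcal{I}_{c}(\phi)$; taking the infimum over admissible $\phi$ in both directions gives the scaling identity for every $\mu>0$. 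I would state this first and then reduce the lemma to a purely numerical inequality.

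Next, substituting the scaling identity, the claimed strict subadditivity
\begin{displaymath}
    m_{\lambda}(c)+m_{1-\lambda}(c)>m_{1}(c)
\end{displaymath}
becomes
\begin{displaymath}
    \bigl(\lambda^{2/(p+1)}+(1-\lambda)^{2/(p+1)}\bigr)\,m_{1}(c)>m_{1}(c),
\end{displaymath}
which, because $m_{1}(c)>0$ (established in the paragraph following the definition of $m_{1}(c)$ via Lemma~\ref{lem3.1} and Sobolev embedding), is equivalent to showing that $g(\lambda):=\lambda^{\theta}+(1-\lambda)^{\theta}>1$ for all $\lambda\in(0,1)$, where $\theta=2/(p+1)\in(0,1)$ since $p>1$. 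This last inequality is elementary: for $\theta\in(0,1)$ and $a\in(0,1)$ one has $a^{\theta}>a$, so $g(\lambda)=\lambda^{\theta}+(1-\lambda)^{\theta}>\lambda+(1-\lambda)=1$. (The excerpt already asserts $g(\eta)>1$ on $(0,1)$, so one may simply cite that observation.)

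Finally I would assemble the pieces: positivity of $m_{1}(c)$, the scaling identity, and the numerical inequality $g(\lambda)>1$ together yield $m_{\lambda}(c)+m_{1-\lambda}(c)=g(\lambda)\,m_{1}(c)>m_{1}(c)$, completing the proof. There is essentially no obstacle here — the only point requiring a word of care is making sure $m_{1}(c)>0$ is genuinely available (it is, from the earlier discussion, and it depends on $c^{2}<c_{1}^{2}$, which is the standing hypothesis in this subsection), since strict subadditivity would fail if $m_{1}(c)$ were zero. The strict inequality in $g(\lambda)>1$ is what upgrades subadditivity to \emph{strict} subadditivity, which is the form needed later to exclude dichotomy in the concentration-compactness argument.
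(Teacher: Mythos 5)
Your proposal is correct and follows exactly the paper's argument: the scaling identity $m_{\lambda}(c)=\lambda^{2/(p+1)}m_{1}(c)$ from homogeneity, the elementary inequality $\lambda^{2/(p+1)}+(1-\lambda)^{2/(p+1)}>1$ for $\lambda\in(0,1)$, and the positivity $m_{1}(c)>0$ together give strict subadditivity. Your added care about where $m_{1}(c)>0$ comes from is a worthwhile explicit remark, but the route is the same as in the paper.
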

  We need commutator estimates for pseudo-differential operators  to control nonlocal terms. The following lemma is due to \cite{zeng} (Lemma 2.12). Below we give an alternative proof relying, as in \cite{zeng}, on the commutator  estimate of Coifman and Meyer (Theorem 35 of \cite{coifman}). We note that for $N=s_{0}=0$ the assertion of  Lemma \ref{lem3.9} reduces to Coifman and Meyer's estimate.
\begin{lemma}\label{lem3.9}
    Let $u\in H^{s_{0}}$ and $\theta\in C^{\infty}(\mathbb{R})$ with bounded derivatives of all orders.  Then, for the commutator $  \left[K_{c},\theta \right]u=K_{c}(\theta u)-\theta K_{c}u$ we have the estimate
    \begin{displaymath}
        \Vert \left[K_{c},\theta \right]u \Vert_{L^2}
            \leq C \left( \sum_{n=1}^{N+1} \Vert \theta^{(n)}\Vert_{L^\infty} \right) \Vert u\Vert_{H^{s_{0}}},
    \end{displaymath}
    where $N=[s_{0}]$ and $C$ is a positive constant.
\end{lemma}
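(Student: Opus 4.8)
The plan is to reduce the statement to the Coifman--Meyer commutator estimate (Theorem 35 of \cite{coifman}) by a Fourier-analytic decomposition that peels off the top-order part of $K_c$. Write $k_c(\xi) = \langle \xi \rangle^{s_0} a(\xi)$ where $\langle \xi \rangle = (1+\xi^2)^{1/2}$ and $a(\xi) = k_c(\xi)\langle\xi\rangle^{-s_0}$; by the ellipticity of $L$ and $B$ and the decay estimates (\ref{derivestimate}), $a$ is a symbol of order $0$, i.e.\ $|a^{(j)}(\xi)| \le C_j \langle\xi\rangle^{-j}$ for all $j$. Thus $K_c = A \Lambda^{s_0}$ (in any order, up to lower order terms, but it is cleanest to just keep it as the product of the two pseudo-differential operators with these symbols via their composition) where $\Lambda^{s_0}$ has symbol $\langle\xi\rangle^{s_0}$ and $A$ has symbol $a(\xi)$ of order $0$. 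First I would handle the integer case: for $N=[s_0]$ and the operator $\Lambda^{N}$ one expands the commutator $[\Lambda^N \text{-like operators},\theta]$ by the Leibniz rule, since $\Lambda^N$ is (up to a $0$-order elliptic factor, again absorbable) a differential operator of order $N$, producing terms $\theta^{(n)} \cdot (\text{operator of order } N-n)$ for $n=1,\dots,N$, each bounded on $L^2$ by $\|\theta^{(n)}\|_{L^\infty}\|u\|_{H^{N}} \le \|\theta^{(n)}\|_{L^\infty}\|u\|_{H^{s_0}}$ since $N \le s_0$.

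The genuinely fractional piece, carrying the exponent $s_0 - N \in [0,1)$, is where I expect the main obstacle, and this is exactly what Coifman--Meyer is designed for. Write $K_c = B_1 \Lambda^{N}$ where $B_1$ has symbol of order $s_0 - N \in [0,1)$ (again with all the required symbol bounds from (\ref{derivestimate})--(\ref{bn-b})), and expand
$$[K_c,\theta]u = [B_1,\theta](\Lambda^N u) + B_1\big([\Lambda^N,\theta]u\big).$$
The second term is handled by the integer-order expansion above (its commutator produces $\theta^{(n)}$, $n=1,\dots,N$, times operators of order $\le N-1$ applied to $u$, then $B_1$ of order $<1$ pushes us up by less than $1$, staying inside $H^{s_0}$; keep careful track that the total loss is at most $N$ derivatives on $u$). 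For the first term, $[B_1,\theta]$ is precisely a commutator of a symbol of order strictly less than $1$ with multiplication by a smooth bounded function, and Coifman--Meyer's theorem gives $\|[B_1,\theta]v\|_{L^2} \le C\|\theta'\|_{L^\infty}\|v\|_{H^{s_0-N-1}}$ — wait, the gain is one derivative, so with $v = \Lambda^N u$ we get $\|[B_1,\theta](\Lambda^N u)\|_{L^2} \le C\|\theta'\|_{L^\infty}\|\Lambda^N u\|_{H^{s_0-N-1}} \le C\|\theta'\|_{L^\infty}\|u\|_{H^{s_0-1}} \le C\|\theta'\|_{L^\infty}\|u\|_{H^{s_0}}$. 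Collecting all contributions yields the bound with the sum $\sum_{n=1}^{N+1}\|\theta^{(n)}\|_{L^\infty}$: the top index $N+1$ arises only if one prefers to distribute the fractional gain differently, or from iterating the Leibniz expansion one extra step; in any case only derivatives of $\theta$ up to order $N+1$ enter.

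An alternative, and perhaps cleaner, route is to bypass the split entirely and invoke \cite{zeng} (Lemma 2.12) directly, since the hypotheses there match ours verbatim once one checks that $K_c$ has a symbol satisfying the Coifman--Meyer-type bounds $|\xi^{|\alpha|}\partial_\xi^\alpha k_c(\xi)| \le C_\alpha \langle\xi\rangle^{s_0}$ for $|\alpha| \le N+1$; this verification is immediate from (\ref{derivestimate}), (\ref{bn-a})--(\ref{bn-b}) and the explicit formula $k_c^2(\xi) = (l(\xi)-c^2)b^{-1}(\xi) - (c_1^2-c^2)/(2c_4^2)$, differentiating the square root and using the lower bound $k_c^2(\xi) \ge (c_1^2-c^2)/(2c_4^2) > 0$ to control the denominators. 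I would present the self-contained argument sketched above, as the paper announces an "alternative proof relying on the commutator estimate of Coifman and Meyer," and note at the end that for $N = s_0 = 0$ there is no $\Lambda^N$ factor at all, $K_c$ itself is of order $0$, and the claim is literally Theorem 35 of \cite{coifman}, consistent with the remark preceding the lemma.
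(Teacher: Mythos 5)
Your overall strategy---peel off an integer-order differential piece of $K_{c}$, handle it by the Leibniz rule, and reserve Coifman--Meyer for the genuinely nonlocal remainder---is the same as the paper's, but your specific decomposition $K_{c}=B_{1}\Lambda^{N}$ leaves a real gap at the key step. The version of Coifman--Meyer used here (Theorem 35 of \cite{coifman}, as quoted in the paper) bounds $\Vert [R,\theta]f'\Vert_{L^{2}}\leq C\Vert\theta'\Vert_{L^{\infty}}\Vert f\Vert_{L^{2}}$ for $R$ of \emph{nonpositive} order with symbol decay $|D_{\xi}^{n}r(\xi)|=\mathcal{O}(|\xi|^{-n})$. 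Your $B_{1}$ has order $s_{0}-N\in[0,1)$, which is in general strictly positive, and the estimate you want, $\Vert[B_{1},\theta]v\Vert_{L^{2}}\leq C\Vert\theta'\Vert_{L^{\infty}}\Vert v\Vert_{H^{s_{0}-N-1}}$, is a Calder\'on-type commutator bound for operators of positive fractional order. It is true, but it is not the cited theorem; to get it you would need a further factorization $B_{1}=R'\Lambda^{s_{0}-N}$ plus a fractional commutator estimate for $[\Lambda^{s_{0}-N},\theta]$ with exponent in $(0,1)$, i.e.\ exactly the kind of statement you were trying to avoid proving. Your own ``wait, the gain is one derivative'' aside is the symptom of this. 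A secondary imprecision: $\Lambda^{N}$ with symbol $(1+\xi^{2})^{N/2}$ is a differential operator ``up to an absorbable order-zero elliptic factor'' only for even $N$; for odd $N$ the factor $(1+\xi^{2})^{N/2}\xi^{-N}$ is singular at $\xi=0$, so the Leibniz expansion of $[\Lambda^{N},\theta]$ needs its own justification.

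The paper sidesteps both issues with a different decomposition: it Taylor-expands the \emph{symbol} at the origin, $k_{c}(\xi)=k_{c}(0)+\sum_{j=1}^{N}\frac{k_{c}^{(j)}(0)}{j!}\xi^{j}+\xi^{N+1}r(\xi)$, so that $K_{c}=k_{c}(0)I+P(D_{x})+D_{x}^{N+1}R$ with $P(D_{x})$ a genuine differential operator of order $N$ and $R$ of order $s_{0}-N-1\leq 0$ satisfying the required decay. Then $[P(D_{x}),\theta]$ is pure Leibniz, $[RD_{x}^{N+1},\theta]$ is expanded by Leibniz on $D_{x}^{N+1}(\theta u)$ (this is where $\theta^{(N+1)}$ enters, with $R$ merely $L^{2}$-bounded on each term), and the single remaining nonlocal term $[R,\theta]D_{x}^{N+1}u=[R,\theta](D_{x}^{N}u)'$ is exactly in the form covered by the quoted Theorem 35. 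If you want to keep your factorization, you must either prove the fractional-order commutator estimate you are invoking or reduce to the nonpositive-order case as the paper does; otherwise I would recommend adopting the Taylor-expansion decomposition, which makes the cited result apply verbatim.
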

\begin{proof}
    Before embarking on the proof, let us write down $k_{c}(\xi )$ in the form:
    \begin{displaymath}
        k_{c}(\xi )=k_{c}(0)+\sum_{j=1}^{N}\frac{k_{c}^{(j)}(0)}{j!}\xi^{j}+\xi^{N+1}r(\xi )
    \end{displaymath}%
    where a superscript in parenthesis indicates order of the derivative. We thus get $K_{c}=k_{c}(0)I+P(D_{x})+D_{x}^{N+1}R$, where $P(D_{x})$ is  the differential operator of order $N$ with vanishing constant term and $R$ is the operator with symbol $r(\xi)$ of nonpositive order.  Also we have the decay estimates
    \begin{displaymath}
        |D_{\xi}^{n}r(\xi)|={\cal O}(| \xi |^{-n})  ~~~\mbox{as} ~~ | \xi | \rightarrow \infty ~~\mbox{for every} ~~n \in \mathbb{N}.
    \end{displaymath}%
    Hence $R$ satisfies the hypotheses of Theorem 35 in \cite{coifman} and thus there exists a constant $C$ such that
    \begin{displaymath}
    \Vert [ R,\theta ]f^{\prime }\Vert_{L^{2}}\leq C\Vert \theta^{\prime}\Vert_{L^{\infty }}\Vert f\Vert_{L^{2}}.
    \end{displaymath}
    An easy computation shows that the commutator satisfies
    \begin{equation}
    [ K_{c}, \theta ]=[P(D_{x}),\theta ]+[RD_{x}^{N+1},\theta ].  \label{ltheta}
    \end{equation}
     Note that $D_{x}$ commutes with $R$. By the Leibniz rule we have
    \begin{displaymath}
        [ P(D_{x}),\theta ]u=P(D_{x})(\theta u)-\theta P(D_{x})u=\sum_{n=1}^{N}\theta^{(n)}P_{N-n}(D_{x})u,
    \end{displaymath}
    where $P_{N-n}(D_{x})$ is a differential operator of order $N-n$. We thus get
    \begin{eqnarray}
        \Vert [ P(D_{x}),\theta ]u\Vert_{L^{2}}
            &\leq & \sum_{n=1}^{N}\Vert \theta^{(n)}P_{N-n}(D_{x})u\Vert_{L^{2}}
            \leq C\left( \sum_{n=1}^{N}\Vert \theta^{(n)}\Vert_{L^{\infty }}\Vert D_{x}^{N-n}u\Vert_{L^{2}}\right)  \nonumber \\
            &\leq &C\left( \sum_{n=1}^{N}\Vert \theta ^{(n)}\Vert_{L^{\infty }}\right)\Vert u\Vert_{H^{N}}.  \label{ptheta}
    \end{eqnarray}%
    Using the Leibniz rule again we obtain
    \begin{eqnarray*}
        [ RD_{x}^{N+1},\theta ]u &=& RD_{x}^{N+1}(\theta u)-\theta (RD_{x}^{N+1}u)
            =R\left(\sum_{n=0}^{N+1}C_{N+1}^{n}\theta^{(n)}D_{x}^{N+1-n}u\right) -\theta (RD_{x}^{N+1}u) \\
        &=&\sum_{n=1}^{N+1}C_{N+1}^{n}R(\theta^{(n)}D_{x}^{N+1-n}u)+R(\theta D_{x}^{N+1}u)-\theta (RD_{x}^{N+1}u) \\
        &=&\sum_{n=1}^{N+1}C_{N+1}^{n}R(\theta^{(n)}D_{x}^{N+1-n}u)+[R,\theta ]D_{x}^{N+1}u,
    \end{eqnarray*}%
    where the $C_{N+1}^{n}$'s are constants.     We proceed to show that
    \begin{eqnarray}
        \left\Vert \sum_{n=1}^{N+1}C_{N+1}^{n}R\left(\theta^{(n)}D_{x}^{N+1-n}u\right)\right\Vert_{L^{2}}
            &\leq &\sum_{n=1}^{N+1}C_{N+1}^{n}\Vert R(\theta^{(n)}D_{x}^{N+1-n}u)\Vert_{L^{2}} \nonumber \\
            &\leq & C \sum_{n=1}^{N+1}\Vert \theta^{(n)}\Vert_{L^{\infty }}
                \Vert D_{x}^{N+1-n}u\Vert_{L^{2}} \nonumber \\
            &\leq & C\left( \sum_{n=1}^{N+1}\Vert \theta^{(n)}\Vert_{L^{\infty}}\right) \Vert u\Vert_{H^{N}}. \label{yyy}
    \end{eqnarray}%
    By Coifman and Meyer's theorem \cite{coifman} it follows that
    \begin{equation}
        \Vert [ R,\theta ]D_{x}^{N+1}u\Vert_{L^{2}}=\Vert [ R,\theta](D_{x}^{N}u)^{\prime }\Vert_{L^{2}}
            \leq C\Vert \theta^{\prime }\Vert_{L^{\infty }}\Vert D_{x}^{N}u\Vert_{L^{2}}
            \leq C\Vert \theta^{\prime }\Vert_{L^{\infty }}\Vert u\Vert _{H^{N}}.  \label{ttheta}
\end{equation}%
Finally, combining (\ref{ltheta}), (\ref{ptheta}), (\ref{yyy}) and (\ref{ttheta})  yields the result.
\end{proof}
Next, we rule out dichotomy through the following lemma.
\begin{lemma}\label{lem3.10}
    Dichotomy does not occur.
\end{lemma}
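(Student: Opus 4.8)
The plan is to argue by contradiction, exactly as for vanishing: assuming the dichotomy alternative (iii) of Lemma~\ref{lem3.4} holds for $\rho_{n}=\frac12|K_{c}\psi_{n}|^{2}+\frac12\gamma_{c}|\psi_{n}|^{2}$ (legitimate via Remark~\ref{rem3.5}, since $\lim_{n}\int\rho_{n}=m_{1}(c)>0$), I would localize a minimizing sequence onto two pieces living far apart in space, show that $\mathcal{I}_{c}$ and $\mathcal{Q}$ are asymptotically additive over these pieces, and reach a contradiction with the strict subadditivity of Lemma~\ref{lem3.8} — equivalently with the strict inequality $g(\eta)=\eta^{\frac{2}{p+1}}+(1-\eta)^{\frac{2}{p+1}}>1$ on $(0,1)$ noted just before that lemma.

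Concretely, let $\tilde{\mu}\in(0,m_{1}(c))$ be the splitting value. Passing to a subsequence, I would extract from the dichotomy centres $y_{n}\in\mathbb{R}$, a (large) fixed radius $R$ and radii $R_{n}\to\infty$ such that $\int_{|x-y_{n}|\le R}\rho_{n}$ and $\int_{|x-y_{n}|\le 2R}\rho_{n}$ both converge to within $\mathcal{O}(\epsilon)$ of $\tilde{\mu}$, and $\int_{\{R\le|x-y_{n}|\le R_{n}\}}\rho_{n}=\mathcal{O}(\epsilon)$, where $\epsilon$ is the dichotomy tolerance. I then pick smooth cutoffs $\theta_{1},\theta_{2}$ with $0\le\theta_{i}\le1$, $\theta_{1}\equiv1$ on $\{|x-y_{n}|\le R\}$ and supported in $\{|x-y_{n}|\le 2R\}$, $\theta_{2}\equiv1$ on $\{|x-y_{n}|\ge 3R\}$ and vanishing on $\{|x-y_{n}|\le 2R\}$, and $\theta_{3}=1-\theta_{1}-\theta_{2}$, supported in the annulus $A_{n}=\{R\le|x-y_{n}|\le 3R\}$; set $\psi_{n}^{i}=\theta_{i}\psi_{n}$. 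Two of the three required splittings are routine. Since $|\psi_{n}|^{2}$ is pointwise, $\|\psi_{n}^{3}\|_{L^{2}}^{2}\le\frac{2}{\gamma_{c}}\int_{A_{n}}\rho_{n}=\mathcal{O}(\epsilon)$ (once $3R<R_{n}$), and the $L^{2}$ cross terms $\langle\psi_{n}^{i},\psi_{n}^{j}\rangle$, $i\ne j$, are $\mathcal{O}(\epsilon)$ as well. For the constraint, the Hölder bound used in the proof of Lemma~\ref{lem3.7}, namely $\int_{A_{n}}|\psi_{n}|^{p+1}\le\|\psi_{n}\|_{L^{2p}}^{p}\big(\int_{A_{n}}|\psi_{n}|^{2}\big)^{1/2}\le C\big(\int_{A_{n}}|\psi_{n}|^{2}\big)^{1/2}$, gives $\mathcal{Q}(\psi_{n}^{3})\to0$, hence $\mathcal{Q}(\psi_{n}^{1})+\mathcal{Q}(\psi_{n}^{2})\to1$.

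The heart of the argument is the splitting of $\mathcal{I}_{c}$, where the nonlocality of $K_{c}$ shows up. Writing $K_{c}(\theta_{i}\psi_{n})=\theta_{i}K_{c}\psi_{n}+[K_{c},\theta_{i}]\psi_{n}$, Lemma~\ref{lem3.9} bounds the commutator by $C\big(\sum_{j}\|\theta_{i}^{(j)}\|_{L^{\infty}}\big)\|\psi_{n}\|_{H^{s_{0}}}=\mathcal{O}(R^{-1})$, the minimizing sequence being bounded in $H^{s_{0}}$. Expanding $\mathcal{I}_{c}(\psi_{n})=\frac12\|K_{c}\psi_{n}\|_{L^{2}}^{2}+\frac12\gamma_{c}\|\psi_{n}\|_{L^{2}}^{2}$ in $\psi_{n}=\psi_{n}^{1}+\psi_{n}^{2}+\psi_{n}^{3}$, I would discard the nonnegative $\psi_{n}^{3}$ self-terms and estimate the cross terms using the disjointness $\theta_{1}\theta_{2}=0$ (which kills the leading part $\langle\theta_{1}K_{c}\psi_{n},\theta_{2}K_{c}\psi_{n}\rangle=0$), the commutator bound $\mathcal{O}(R^{-1})$, and the smallness of $\rho_{n}$ and $|\psi_{n}|^{2}$ on $A_{n}$. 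This yields
\[
\mathcal{I}_{c}(\psi_{n})\ \ge\ \mathcal{I}_{c}(\psi_{n}^{1})+\mathcal{I}_{c}(\psi_{n}^{2})-\delta ,\qquad \mathcal{I}_{c}(\psi_{n}^{i})=\int\theta_{i}^{2}\rho_{n}+\mathcal{O}(R^{-1}),
\]
with $\delta=\mathcal{O}(\epsilon)+\mathcal{O}(R^{-1})$; in particular $\mathcal{I}_{c}(\psi_{n}^{1})\to\mu_{1}$, $\mathcal{I}_{c}(\psi_{n}^{2})\to\mu_{2}$ (further subsequence) with $|\mu_{1}-\tilde{\mu}|\le\delta$ and $|\mu_{2}-(m_{1}(c)-\tilde{\mu})|\le\delta$. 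Using the definition of $m_{\lambda}(c)$ and the scaling $m_{\lambda}(c)=\lambda^{\frac{2}{p+1}}m_{1}(c)$, one has $\mathcal{I}_{c}(\psi_{n}^{i})\ge\mathcal{Q}(\psi_{n}^{i})^{\frac{2}{p+1}}m_{1}(c)$ for every $n$; letting $\mathcal{Q}(\psi_{n}^{1})\to a$, so $\mathcal{Q}(\psi_{n}^{2})\to1-a$, the bounds $\mu_{1}\le\tilde{\mu}+\delta<m_{1}(c)$ and $\mu_{2}\le m_{1}(c)-\tilde{\mu}+\delta<m_{1}(c)$ (valid for $\epsilon$ small and $R$ large) pin $a$ into a compact subinterval $[\delta_{0},1-\delta_{0}]\subset(0,1)$ depending only on $\tilde{\mu},m_{1}(c),p$. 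Passing to the limit in the two inequalities and adding,
\[
m_{1}(c)+\delta\ \ge\ \mu_{1}+\mu_{2}\ \ge\ \big(a^{\frac{2}{p+1}}+(1-a)^{\frac{2}{p+1}}\big)m_{1}(c)\ \ge\ (1+c_{0})\,m_{1}(c),
\]
where $c_{0}=\min_{[\delta_{0},1-\delta_{0}]}(g-1)>0$; letting $\epsilon\to0$ and $R\to\infty$, so $\delta\to0$, contradicts $m_{1}(c)>0$.

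I expect the main obstacle to be the $\mathcal{I}_{c}$-splitting of the third paragraph: because $K_{c}$ is a genuine pseudo-differential operator, the localized pieces $\theta_{i}\psi_{n}$ do not have disjoint ``energy supports'', so one must separate the local leading terms (which cancel by the disjointness of $\operatorname{supp}\theta_{1}$ and $\operatorname{supp}\theta_{2}$) from a remainder absorbed into an $\mathcal{O}(R^{-1})$ error via Lemma~\ref{lem3.9} — this is precisely where the symbol decay (\ref{derivestimate}) and the ellipticity of $K_{c}$ enter. A secondary point needing care is the simultaneous bookkeeping of the two small parameters $\epsilon$ and $R^{-1}$, together with checking that $\mathcal{Q}(\psi_{n}^{1})$ and $\mathcal{Q}(\psi_{n}^{2})$ stay uniformly bounded away from $0$ and $1$, since only then does the strict inequality $g(a)>1$ close the argument.
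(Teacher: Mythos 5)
Your proposal follows essentially the same route as the paper's proof: localize the minimizing sequence with smooth cutoffs adapted to the dichotomy supports, control the nonlocal cross terms of $\mathcal{I}_{c}$ via the commutator estimate of Lemma \ref{lem3.9}, establish asymptotic additivity of $\mathcal{I}_{c}$ and $\mathcal{Q}$ over the two distant pieces, and contradict the strict subadditivity encoded in $\eta^{\frac{2}{p+1}}+(1-\eta)^{\frac{2}{p+1}}>1$. The only real organizational difference is that you exclude the degenerate splittings $a\in\{0,1\}$ by pairing the two-sided localization $\mathcal{I}_{c}(\psi_{n}^{i})\approx\tilde{\mu}$, $m_{1}(c)-\tilde{\mu}$ with the scaling bound $\mathcal{I}_{c}(\psi_{n}^{i})\ge\mathcal{Q}(\psi_{n}^{i})^{\frac{2}{p+1}}m_{1}(c)$, whereas the paper runs a separate contradiction argument for $\lambda_{1}=0$; both rest on the same estimates, so this is a stylistic rather than substantive divergence.
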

\begin{proof}
    Suppose dichotomy occurs. Then, by Lemma \ref{lem3.4}  there is $\tilde \mu \in (0,\mu)$ such that for any $\epsilon>0$, there exists $k_0\geq 1$, and $\rho^1_k, \rho^2_k\geq 0$ such that for $k\geq k_0$
    \begin{eqnarray*}
        && \|\rho_{n_{k}}-(\rho_k^1+\rho_k^2)\|_{L^1} \leq \epsilon, \\
        && | \int \rho_k^1 dx-\tilde\mu |\leq \epsilon,~~~ | \int \rho_k^2 dx-(\mu-\tilde \mu) |\leq \epsilon,~~~ \\
        && \mbox{supp}~ \rho_k^1\cap \mbox{supp}~ \rho_k^2 =\emptyset,~~ \mbox{dist}
            \{ \mbox{supp}~ \rho_k^1, \mbox{supp}~ \rho_k^2\}\rightarrow \infty,~\mbox{as}~k\rightarrow \infty.
    \end{eqnarray*}
    As in Lions \cite{lions1}, assume that the supports of $\rho_k^1$ and $\rho_k^2$ are of the form:
    \begin{displaymath}
        \mbox{supp}~ \rho_k^1\subset (y_k-R_k, y_k+R_k),~~~~~ \mbox{supp}~\rho_k^2\subset (-\infty, y_k-2 R_k) \cup(y_k+2 R_k, \infty)
    \end{displaymath}
    for some $R_k \rightarrow \infty$. Thus we have for $k\geq k_0$
    \begin{displaymath}
        \int_{R_k\leq \mid x-y_k\mid \leq 2R_k} \rho_{n_k} dx \leq \|\rho_{n_k}-(\rho_k^1+\rho_k^2)\|_{L^1}\leq \epsilon.
    \end{displaymath}
    We now choose a function $\theta ^{1}(x)\in C^{\infty }(\mathbb{R})$ so that $0\leq \theta^{1}\leq 1$.  Let $\theta^{1}(x)=1$ for $|x|\leq 1$ and $\theta^{1}(x)=0$ for $|x|\geq 2$. Let $\theta^{2}(x)$ be defined by $\theta ^{2}(x)=1-\theta ^{1}(x)$.  Define $\theta_{k}^{i}(x)=\theta^{i}(\frac{x-y_{k}}{R_{k}})$ and  $\psi_{k}^{i}(x)=\theta_{k}^{i}(x)\psi_{n_{k}}(x)$ for $i=1,2$. Hence we have $\psi_{n_{k}}(x)=\psi_{k}^{1}(x)+\psi_{k}^{2}(x)$ and
    \begin{equation}
        \mathcal{I}_{c}(\psi _{n_{k}})
            =\mathcal{I}_{c}(\psi_{k}^{1})+\mathcal{I}_{c}(\psi_{k}^{2})
                +\int(K_{c}\psi_{k}^{1})\left( K_{c}\psi_{k}^{2}\right) dx
                +\gamma_{c}\int\psi_{k}^{1}\psi_{k}^{2}dx.  \label{Ic}
    \end{equation}
    We first rewrite the first integral term as follows:
    \begin{eqnarray*}
        \int (K_{c}\psi _{k}^{1})\left( K_{c}\psi _{k}^{2}\right) dx
        &=&\int (K_{c}\theta _{k}^{1}\psi _{n_{k}})\left( K_{c}\theta _{k}^{2}\psi_{n_{k}}\right) dx \\
        &=&\int \left\{ \theta _{k}^{1}K_{c}\psi _{n_{k}}+[K_{c},\theta_{k}^{1}]\psi_{n_{k}}\right\}
                \left\{ \theta_{k}^{2}K_{c}\psi_{n_{k}}+[K_{c},\theta_{k}^{2}]\psi _{n_{k}}\right\} dx \\
        &=&\int \left\{ \theta _{k}^{1}\theta_{k}^{2}\left( K_{c}\psi_{n_{k}}\right)^{2}
            +\left([K_{c},\theta_{k}^{1}]\psi_{n_{k}}\right) \left([K_{c},\theta_{k}^{2}] \psi _{n_{k}} \right) \right. \\
        &&~~~~~~  \left.  +\left( \theta_{k}^{1}[K_{c},\theta _{k}^{2}]\psi_{n_{k}}
                +\theta_{k}^{2}[K_{c},\theta_{k}^{1}]\psi_{n_{k}}\right) K_{c}\psi_{n_{k}}
            \right\} dx .
    \end{eqnarray*}
    For large $k$ we estimate
    \begin{displaymath}
        \int \theta_{k}^{1}\theta_{k}^{2}(K_{c}\psi_{n_{k}})^{2}dx
        \leq     \int_{R_{k}\leq |x-y_{k}|\leq 2R_{k}}(K_{c}\psi_{n_{k}})^{2}dx
        \leq     \int_{R_{k}\leq |x-y_{k}|\leq 2R_{k}}\rho_{n_{k}}dx\leq \epsilon .
    \end{displaymath}%
    Note that we have
    \begin{eqnarray*}
        \int \left([K_{c},\theta_{k}^{1}]\psi_{n_{k}}\right) \left([K_{c},\theta_{k}^{2}]\psi_{n_{k}}\right) dx
        &\leq &\Vert \lbrack K_{c},\theta_{k}^{1}]\psi_{n_{k}}\Vert_{L^{2}}\Vert \lbrack K_{c},\theta_{k}^{2}]\psi _{n_{k}}\Vert _{L^{2}}, \\
        \int\left( \theta_{k}^{1}[K_{c},\theta_{k}^{2}]\psi_{n_{k}}+\theta_{k}^{2}[K_{c},\theta_{k}^{1}]\psi _{n_{k}}\right) K_{c}\psi_{n_{k}}dx
        &\leq &\Vert K_{c}\psi_{n_{k}}\Vert_{L^{2}}\left( \Vert \lbrack K_{c},\theta_{k}^{1}]\psi_{n_{k}}\Vert_{L^{2}}+\Vert \lbrack K_{c},\theta_{k}^{2}]\psi_{n_{k}}\Vert_{L^{2}}\right) ,
    \end{eqnarray*}
    By the commutator estimate of Lemma \ref{lem3.9},  we get
    \begin{displaymath}
        \Vert [ K_{c},\theta_{k}^{i}]\psi_{n_{k}}\Vert_{L^{2}}\leq C\left(\sum_{n=1}^{N+1}\Vert \theta_{k}^{i(n)}\Vert_{L^{\infty}}\right) \Vert \psi_{n_{k}}\Vert_{H^{s_{0}}}\leq \frac{C^{i}}{R_{k}}
    \end{displaymath}
    for $i=1,2$.  Having disposed of the above results, we now return to the first integral term  in (\ref{Ic}). Thus, for large $k$ we have
    \begin{displaymath}
        \int (K_{c}\psi_{k}^{1})\left( K_{c}\psi_{k}^{2}\right) dx=\mathcal{O}(\epsilon ).
    \end{displaymath}
    The last integral term in (\ref{Ic}) can be handled similarly. From what has already been proved, we deduce that
    \begin{displaymath}
        \mathcal{I}_{c}(\psi_{n_{k}})=\mathcal{I}_{c}(\psi_{k}^{1})+\mathcal{I}_{c}(\psi_{k}^{2})+\mathcal{O}(\epsilon )+\mathcal{O}({1\over R_{k}}).
    \end{displaymath}
    Since $\epsilon >0 $ is arbitrary, it follows from (\ref{m1}) that
    \begin{equation}
        m_{1}(c)=\lim_{k\rightarrow \infty }\mathcal{I}_{c}(\psi_{n_{k}})
        \geq \lim_{k\rightarrow \infty }\inf \mathcal{I}_{c}(\psi_{k}^{1})
        +\lim_{k\rightarrow \infty }\inf \mathcal{I}_{c}(\psi _{k}^{2}). \label{mlambda}
    \end{equation}
    Since $\Vert \psi_{n_{k}}\Vert _{H^{s_{0}}}$ and $\Vert \psi_{n_{k}}\Vert _{L^{2p }}$ are uniformly bounded, we see that
    \begin{eqnarray*}
        \int (|\psi_{n_{k}}|^{p+1}-|\psi_{k}^{1}|^{p+1}-|\psi_{k}^{2}|^{p+1})dx
        &=&\int_{R_{k}\leq |x-y_{k}|
            \leq 2R_{k}}|\psi_{n_{k}}|^{p+1}|1-(\theta_{k}^{1})^{p+1}-(\theta_{k}^{2})^{p+1}|dx \\
        &\leq &\sup_{k}\Vert \psi_{n_{k}}\Vert_{L^{2p}}^{p}\left(\int_{R_{k}
            \leq|x-y_{k}|\leq 2R_{k}}|\psi _{n_{k}}|^{2}dx\right)^{1/2} \\
        &\leq &\sup_{k}\Vert \psi _{n_{k}}\Vert _{L^{2p }}^{p}\left(\int_{R_{k}
            \leq |x-y_{k}|\leq 2R_{k}}\rho _{n_{k}}dx \right)^{1/2}=\mathcal{O}(\epsilon ).
    \end{eqnarray*}
    Combining this with (\ref{momentum-t}) yields
    \begin{displaymath}
        1=\mathcal{Q}(\psi_{n_{k}})=\mathcal{Q}(\psi_{k}^{1})+\mathcal{Q}(\psi_{k}^{2})+\mathcal{O}(\epsilon ).
    \end{displaymath}
    By passing to a subsequence if necessary, we can assume that, for $i=1,2$,  $\lim_{k\rightarrow \infty }\mathcal{Q}(\psi_{k}^{i})=\lambda _{i}$  with $\lambda_{1}+\lambda _{2}=1$. Note that
    \begin{displaymath}
    \lim\limits_{k\rightarrow \infty }\inf\mathcal{I}_{c}(\psi_{k}^{i})\geq m_{\lambda_{i}}(c) ~~~ \mbox{for} ~~ i=1,2.
    \end{displaymath}
    We now show that $\lambda _{1}$ (and similarly $\lambda _{2}$) is non-zero. To this end, suppose $\lambda _{1}=0$. This gives $\lambda _{2}=1$ and $\lim\limits_{k\rightarrow \infty }\inf \mathcal{I}_{c}(\psi _{k}^{2})\geq m_{1}(c)$.    On the other hand, by the commutator estimates we have
    \begin{eqnarray*}
        \mathcal{I}_{c}(\psi_{k}^{1})
        & = & {1\over 2}\Vert K_{c}\psi_{k}^{1}\Vert_{L^{2}}^{2}+{1\over 2}\gamma_{c}\Vert \psi_{k}^{1}\Vert_{L^{2}}^{2} \\
        &\geq & {1\over 2}  \Vert \theta_{k}^{1}K_{c}\psi_{n_{k}}\Vert_{L^{2}}^{2}+ {1\over 2} \gamma_{c}\Vert \theta_{k}^{1}\psi_{n_{k}}\Vert_{L^{2}}^{2}
            -\Vert [K_{c},\theta_{k}^{1}]\psi_{n_{k}}\Vert_{L^{2}}\Vert K_{c}\psi_{n_{k}}\Vert_{L^{2}}    \\
        &\geq &\int \theta_{k}^{1}\rho_{n_{k}}dx-\mathcal{O}(\epsilon )\\
        &\geq &\int_{|x-y_{k}|\leq R_{k}}\rho_{n_{k}}dx-\mathcal{O}(\epsilon ) \\
        &\geq &\int_{|x-y_{k}|\leq R_{k}}\rho_{k}^{1}dx
            -\Vert \rho_{n_{k}}-(\rho_{k}^{1}+\rho_{k}^{2})\Vert_{L^{1}}-\mathcal{O}(\epsilon ),
    \end{eqnarray*}
    where we have used the fact that $\rho_{k}^{1}$ has support in $|x-y_{k}|\leq R_{k}$ and $\rho_{k}^{2} $ vanishes there. As $k\rightarrow \infty $ this yields
    \begin{displaymath}
        \lim_{k\rightarrow \infty }\inf \mathcal{I}_{c}(\psi_{k}^{1}) \geq \tilde{\mu},
    \end{displaymath}
    and by (\ref{mlambda}), we obtain $m_{1}(c)\geq  \tilde{\mu}+m_{1}(c)$, contradicting $\ \ \tilde{\mu}>0$. Then it follows that $\ \lambda _{i}\neq 0$ for $i=1,2$. We thus get
    \begin{displaymath}
        m_{1}(c)\geq m_{\lambda _{1}}(c)+m_{1-\lambda _{1}}(c)
    \end{displaymath}
   which contradicts the subadditivity property of Lemma \ref{lem3.8}. This completes the proof that the dichotomy does not occur.
\end{proof}
 So far, with Lemmas  \ref{lem3.7} and  \ref{lem3.10} we have ruled out the possibility of both vanishing and dichotomy. The Concentration-Compactness Lemma implies that "compactness" occurs. We are then in a position to prove the following theorem  establishing the existence of global minimizers.
\begin{theorem}\label{theo3.11}
    Assume that $\rho \geq 0$, $~r+{\rho \over 2}\geq 1$ and $c^{2}<c_{1}^{2}$. Let $\{\psi _{n}\}$ be a minimizing sequence for (\ref{m1}). Then there exists a subsequence $\{\psi_{n_{k}}\}$ and a sequence $\{y_{n_{k}}\}$ of real numbers such that $\psi _{n_{k}}(.+y_{n_{k}})$ converges to some $\psi \in H^{s_{0}}$ and $\psi$ is a minimizer for (\ref{m1}).
\end{theorem}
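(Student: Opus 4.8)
The plan is to carry out the concluding step of the standard concentration-compactness scheme. By Lemma \ref{lem3.4} together with Remark \ref{rem3.5}, applied to the nonnegative sequence $\rho_n=\frac12|K_c\psi_n|^2+\frac12\gamma_c|\psi_n|^2$ (which satisfies $\int\rho_n\to m_1(c)=:\mu>0$), some subsequence $\{\psi_{n_k}\}$ falls into exactly one of the three alternatives, and Lemmas \ref{lem3.7} and \ref{lem3.10} have already excluded vanishing and dichotomy; hence compactness holds. So there are points $y_{n_k}\in\mathbb{R}$ such that for every $\epsilon>0$ there is an $R>0$ with $\int_{|x-y_{n_k}|\le R}\rho_{n_k}\,dx\ge\mu-\epsilon$ for all large $k$. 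First I would use the translation invariance of $\mathcal{I}_c$ and $\mathcal{Q}$ to replace $\psi_{n_k}$ by $\psi_{n_k}(\,\cdot\,+y_{n_k})$, still a minimizing sequence, so that the densities concentrate near the origin. Since $\int\rho_{n_k}\to\mu$, the mass of $\rho_{n_k}$ outside $|x|\le R$ is $O(\epsilon)$, and because $\rho_{n_k}\ge\frac12\gamma_c|\psi_{n_k}|^2$ this yields the uniform tightness bound $\int_{|x|>R}|\psi_{n_k}|^2\,dx\le C\epsilon$.

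Next I would extract a weak limit. The minimizing sequence is bounded in $H^{s_0}$ (recall $\Vert\psi_{n_k}\Vert_{H^{s_0}}^2\le\gamma_1^{-1}\mathcal{I}_c(\psi_{n_k})$ with $\mathcal{I}_c(\psi_{n_k})$ convergent), so after passing to a further subsequence $\psi_{n_k}\rightharpoonup\psi$ weakly in $H^{s_0}$. On each bounded interval $(-R,R)$ the embedding $H^{s_0}(-R,R)\hookrightarrow L^{p+1}(-R,R)$ is compact (here $s_0\ge\frac12$, so this is valid for every finite exponent), hence $\psi_{n_k}\to\psi$ strongly in $L^{p+1}(-R,R)$. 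Combining this with the tail estimate $\int_{|x|>R}|\psi_{n_k}|^{p+1}\,dx\le\Vert\psi_{n_k}\Vert_{L^{2p}}^{p}\big(\int_{|x|>R}|\psi_{n_k}|^2\,dx\big)^{1/2}\le C\epsilon^{1/2}$ (and the same bound for $\psi$, using $H^{s_0}\subset H^{1/2}\subset L^{2p}$), I obtain $\psi_{n_k}\to\psi$ strongly in $L^{p+1}(\mathbb{R})$. Therefore $\mathcal{Q}(\psi)=\lim_k\mathcal{Q}(\psi_{n_k})=1$, so $\psi$ is admissible for the variational problem (\ref{m1}).

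Finally I would identify $\psi$ as a minimizer and upgrade the convergence to $H^{s_0}$. The functional $\mathcal{I}_c$ is a positive definite quadratic form whose square root is, by Remark \ref{rem3.3}, a norm equivalent to $\Vert\cdot\Vert_{H^{s_0}}$; being convex and continuous it is weakly lower semicontinuous, so $\mathcal{I}_c(\psi)\le\liminf_k\mathcal{I}_c(\psi_{n_k})=m_1(c)$. Since $\mathcal{Q}(\psi)=1$, the definition of $m_1(c)$ as an infimum forces $\mathcal{I}_c(\psi)\ge m_1(c)$, whence $\mathcal{I}_c(\psi)=m_1(c)$ and $\psi$ is a minimizer. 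Moreover $\psi_{n_k}\rightharpoonup\psi$ together with $\mathcal{I}_c(\psi_{n_k})\to\mathcal{I}_c(\psi)$ is precisely convergence of norms in the Hilbert space $H^{s_0}$ equipped with the equivalent inner product associated to $\mathcal{I}_c$, and in a Hilbert space weak convergence plus norm convergence implies strong convergence; thus $\psi_{n_k}(\,\cdot\,+y_{n_k})\to\psi$ in $H^{s_0}$.

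The genuinely hard part — ruling out vanishing and dichotomy, where the nonlocality of $K_c$ must be tamed through the commutator estimate of Lemma \ref{lem3.9} — has already been dispatched in Lemmas \ref{lem3.7} and \ref{lem3.10}, so the argument above is largely bookkeeping. The one point that deserves care is the transfer of tightness from the auxiliary densities $\rho_{n_k}$ to the functions $\psi_{n_k}$: this works because $\rho_{n_k}$ dominates both $\frac12\gamma_c|\psi_{n_k}|^2$ and $\frac12|K_c\psi_{n_k}|^2$, so control of its mass at infinity controls the full $H^{s_0}$ tail, and hence — via Hölder and the Sobolev embedding — the $L^{p+1}$ tail used to pass from local to global strong convergence.
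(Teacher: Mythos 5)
Your argument is correct and follows essentially the same route as the paper: invoke compactness from the concentration-compactness alternative, extract a weak $H^{s_0}$ limit, upgrade to strong $L^{p+1}$ convergence via local compact embedding plus the tail bound to get $\mathcal{Q}(\psi)=1$, and then use weak lower semicontinuity of the equivalent Hilbertian norm $\sqrt{\mathcal{I}_c}$ to close the two-sided inequality $\mathcal{I}_c(\psi)=m_1(c)$. The only cosmetic differences are that the paper passes through strong $L^2$ convergence and interpolates to $L^{p+1}$ globally, and proves the lower semicontinuity step by an explicit Cauchy--Schwarz computation with the inner product $\langle\cdot,\cdot\rangle_c$ rather than citing it abstractly.
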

\begin{proof}
    Let $\{\psi _{n}\}$ be a minimizing sequence for (\ref{m1}). Since vanishing and dichotomy are ruled out, the concentration-compactness lemma  implies  that there is a subsequence $\{ \psi_{n_{k}} \}$ such that for any $\epsilon > 0$ there are $R > 0$ and real numbers $y_{k}$ satisfying
    \begin{displaymath}
        \int_{|x|\geq R} \mid \psi_{n_{k}}(x+y_{n_{k}})\mid^{2}dx < \epsilon.
    \end{displaymath}
    Since the sequence $\{\psi_{n}(.+y_{n_{k}})\}$ is bounded in $H^{s_{0}}$, replacing it by a subsequence if necessary, we can assume that it converges weakly to some $\psi \in H^{s_{0}}$. The tails of the functions $\psi_{n}(.+y_{n_{k}})$ are uniformly bounded by $\epsilon$ outside some interval $[-R,R]$ in the $L^{2}$ norm. $H^{s_{0}}([-R,R])$ is compactly embedded in $L^{2}([-R,R])$ so that $\psi_{n_{k}}(.+y_{n_{k}})$ restricted to $[-R,R]$ converges strongly to $\psi$ restricted to $[-R,R]$, in $L^{2}([-R,R])$. But then we have
    \begin{equation}
    \Vert \psi_{n_{k}}(.+y_{{n_{k}}})-\psi \Vert_{L^{2}}\leq \Vert \psi_{n_{k}}(.+y_{{n_{k}}})-\psi \Vert _{L^{2}([-R,R])}+2\epsilon.
    \end{equation}
    This shows that $\psi_{n_{k}}(.+y_{{n_{k}}})$ converges strongly to $\psi $ in $L^{2}$. Moreover, it follows from the embedding $H^{s_{0}}\subset L^{2p}$ that there is some $C>0$ so that $\Vert \psi_{n_{k}}(.+y_{{n_{k}}})\Vert_{L^{2p}} \leq C$ for all $n_{k}$. Then we have
    \begin{eqnarray*}
    \Vert \psi _{n_{k}}(.+y_{{n_{k}}})-\psi \Vert _{L^{p+1}}^{p+1}
        &\leq &\Vert\psi _{n_{k}}(.+y_{{n_{k}}})-\psi \Vert _{L^{2p}}^{p}\Vert \psi_{n_{k}}(.+y_{{n_{k}}})-\psi \Vert _{L^{2}}   \\
        &\leq & (2C)^{p}\Vert \psi _{n_{k}}(.+y_{{n_{k}}})-\psi \Vert _{L^{2}}.
        \nonumber
    \end{eqnarray*}%
    Hence $\psi_{n_{k}}(.+y_{{n_{k}}})$ also converges to $\psi \in L^{p+1}$ strongly and hence $Q(\psi )=1$. By the definition of $m_{1}(c)$, we get $\mathcal{I}_{c}(\psi )\geq m_{1}(c)$.  As it has already been stated in Remark \ref{rem3.3},  $\sqrt{\mathcal{I}_{c}(\psi )}$ defines a Hilbertian norm on $H^{s_{0}}$ equivalent to the standard norm. Denoting the corresponding inner product by $\langle . , . \rangle_{c}$ and recalling that $\psi_{n_{k}}(.+y_{n_{k}})$ is also a minimizing sequence, we get
   \begin{eqnarray*}
    \mathcal{I}_{c}(\psi ) =\langle \psi , \psi \rangle_{c}=\lim_{k \rightarrow \infty}  \langle \psi , \psi_{n_{k}}(.+y_{{n_{k}}}) \rangle_{c}
       &\leq & \lim_{k \rightarrow \infty }\sup \sqrt{\mathcal{I}_{c}(\psi)} \sqrt{\mathcal{I}_{c}(\psi_{n_{k}}(.+y_{n_{k}}))}    \\
       & =   & \sqrt{\mathcal{I}_{c}(\psi)} \sqrt{m_{1}(c)}
    \end{eqnarray*}%
    so that $\mathcal{I}_{c}(\psi ) \leq m_{1}(c)$. Combining with the reverse inequality above we obtain $\mathcal{I}_{c}(\psi )=m_{1}(c)$, so $\psi$ is the minimizer. This completes the proof.
\end{proof}
\begin{remark}\label{rem3.12}
    Note that in the above proof we have
    \begin{equation*}
    \mathcal{I}_{c}(\psi)=\lim_{k \rightarrow \infty }\mathcal{I}_{c}(\psi_{n_{k}}(.+y_{n_{k}})),
    \end{equation*}
    so the weak limit preserves the norm. Then it follows that it is a strong limit; in other words $\psi_{n_{k}}(.+y_{n_{k}})$ converges strongly to $ \psi \in H^{s_{0}}$.
\end{remark}
With Theorem \ref{theo3.11} in hand, we can now prove the following main result, namely, the existence of traveling wave solutions:
\begin{theorem}\label{theo3.13}
    Assume that $\rho \geq 0$ and $~r+{\rho \over 2}\geq 1$. Let $c^{2}<c_{1}^{2}$ and $g(u)=- |u|^{p-1}u$. Then  the traveling wave solutions of (\ref{nonlocal}) exist.
\end{theorem}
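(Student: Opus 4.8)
The plan is to show that the minimizer $\psi\in H^{s_{0}}$ of the constrained problem (\ref{m1}) produced by Theorem \ref{theo3.11} satisfies, after a simple rescaling, the profile equation (\ref{ode}) with $g(\phi_{c})=-|\phi_{c}|^{p-1}\phi_{c}$; once $\phi_{c}$ solves (\ref{ode}) and has sufficient decay, the opening remark of Section 3 yields that $u(x,t)=\phi_{c}(x-ct)$ solves (\ref{nonlocal}). Since $\rho\geq 0$ and $r+\rho/2\geq 1$, the embeddings $H^{s_{0}}\subset H^{1/2}\subset L^{q}$ for $q\geq 2$ show that $\mathcal{I}_{c}$ and $\mathcal{Q}$ are well defined and of class $C^{1}$ on $H^{s_{0}}$, with derivatives
\[
    \mathcal{I}_{c}'(\psi)=(L-c^{2}I)B^{-1}\psi,\qquad \mathcal{Q}'(\psi)=(p+1)\,|\psi|^{p-1}\psi,
\]
both lying in $H^{-s_{0}}=(H^{s_{0}})^{*}$. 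Because $\mathcal{Q}(\psi)=1$ we have $\mathcal{Q}'(\psi)\neq 0$, so the Lagrange multiplier rule gives a real $\lambda$ with $(L-c^{2}I)B^{-1}\psi=\lambda(p+1)\,|\psi|^{p-1}\psi$.

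Pairing this identity with $\psi$ in the $H^{-s_{0}}$--$H^{s_{0}}$ duality and using (\ref{ic0}), (\ref{momentum-t}) gives $2\,\mathcal{I}_{c}(\psi)=\lambda(p+1)\,\mathcal{Q}(\psi)$, that is $\lambda=\tfrac{2}{p+1}m_{1}(c)$, which is strictly positive because $m_{1}(c)>0$ was established right after (\ref{m1}). Now set $\phi_{c}=\mu\psi$ with $\mu>0$ still free. Then $(L-c^{2}I)B^{-1}\phi_{c}=\mu\lambda(p+1)|\psi|^{p-1}\psi$ while $|\phi_{c}|^{p-1}\phi_{c}=\mu^{p}|\psi|^{p-1}\psi$, so choosing $\mu^{p-1}=\lambda(p+1)=2\,m_{1}(c)$, i.e. $\mu=(2\,m_{1}(c))^{1/(p-1)}$, yields
\[
    (L-c^{2}I)B^{-1}\phi_{c}-|\phi_{c}|^{p-1}\phi_{c}=0,
\]
which is precisely (\ref{ode}) in the present regime; moreover $\phi_{c}\not\equiv 0$ since $\mathcal{Q}(\phi_{c})=\mu^{p+1}\neq 0$.

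To finish I would verify the decay/regularity needed in the opening paragraph of Section 3. Rewriting (\ref{ode}) as $\phi_{c}=B(L-c^{2}I)^{-1}\bigl(|\phi_{c}|^{p-1}\phi_{c}\bigr)$ and observing that, by (\ref{bn-a})--(\ref{bn-b}), the symbol $b(\xi)/(l(\xi)-c^{2})$ is $\mathcal{O}\bigl((1+\xi^{2})^{-s_{0}}\bigr)$, so that $B(L-c^{2}I)^{-1}$ gains $2s_{0}\geq 1$ derivatives, one bootstraps from $\phi_{c}\in H^{s_{0}}\subset L^{\infty}\cap L^{2p}$ (hence $|\phi_{c}|^{p-1}\phi_{c}\in L^{2}$) to $\phi_{c}\in H^{2s_{0}}$, and iterates (as far as the smoothness of $t\mapsto|t|^{p-1}t$ permits) to $\phi_{c}\in H^{s}$ with $s$ as large as needed. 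Since $H^{s}\subset C_{0}$ for $s>1/2$, this gives the decay of $\phi_{c}$, of $B^{-1}\phi_{c}$, and of their first derivatives, after which $LB^{-1}\phi_{c}=c^{2}B^{-1}\phi_{c}+|\phi_{c}|^{p-1}\phi_{c}$ decays as well, so $u(x,t)=\phi_{c}(x-ct)$ is a traveling wave solution of (\ref{nonlocal}).

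The step I expect to be most delicate is not the scaling bookkeeping above but the two ``soft'' ingredients: the $C^{1}$ differentiability of $\mathcal{I}_{c}$ and $\mathcal{Q}$ on $H^{s_{0}}$ together with the non-degeneracy $\mathcal{Q}'(\psi)\neq 0$ that legitimizes the multiplier, and the regularity bootstrap, which is only as clean as the smoothness of the power nonlinearity allows and may therefore require the restriction $p\geq[s]+1$ (cf. Remark \ref{rem2.5}) or a short interpolation argument to reach a Sobolev space in which pointwise decay at infinity is guaranteed.
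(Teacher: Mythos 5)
Your proposal is correct and follows essentially the same route as the paper: minimizer from the concentration-compactness argument, Euler--Lagrange equation with a Lagrange multiplier evaluated by pairing with $\psi$ (giving $2m_{1}(c)=\theta(p+1)$), the rescaling $\phi_{c}=[2m_{1}(c)]^{1/(p-1)}\psi$, and then a regularity bootstrap using that $(L-c^{2}I)^{-1}B$ gains $2s_{0}=\rho+r\geq 1$ derivatives. The only nitpick is that $H^{s_{0}}\subset L^{\infty}$ fails in the borderline case $s_{0}=\tfrac12$, but this is harmless since the first bootstrap step only needs $\phi_{c}\in L^{2p}$, exactly as in the paper.
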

\begin{proof}
    The proof consists of two steps, first we show that a proper scaling of the minimizer is a weak solution of (\ref{ode}). Then applying a regularity argument, we deduce that this weak solution is actually strong and exhibits the necessary decay properties.     A minimizer $\psi \in H^{s_{0}}$ of the variational problem (\ref{m1}) is a weak solution of the Euler-Lagrange equation
    \begin{equation}
    (L-c^{2}I)B^{-1}\psi-\theta(p+1)| \psi |^{p-1}\psi=0,  \label{el-imp}
    \end{equation}
    where $\theta$ denotes a Lagrange multiplier. Multiplying (\ref{el-imp}) by $\psi $ and integrating gives  $2m_{1}(c)=\theta (p+1)$. Then
    \begin{displaymath}
    \phi_{c} =[2m_{1}(c)]^{1/(p-1)}\psi \in H^{s_{0}}
    \end{displaymath}
    is a weak solution of (\ref{ode}):
    \begin{equation}
    (L-c^{2}I)B^{-1}\phi_{c}- | \phi_{c}|^{p-1}\phi_{c}=0.  \label{ooo}
    \end{equation}
    As $s_{0}\geq \frac{1}{2}$ and $p>1$, we have $|\phi_{c} |^{p-1}\phi_{c} \in L^{2}$. Then, $(L-c^{2}I)^{-1}B$ is an operator of order $-(\rho +r)$, we get
     \begin{displaymath}
    \phi_{c}=(L-c^{2}I)^{-1}B(|\phi_{c} |^{p-1}\phi_{c} )\in H^{\rho +r}=H^{2s_{0}}.
    \end{displaymath}
    Thus $\phi_{c}$ is a strong solution of (\ref{ode}). We note that the regularity of $\phi_{c}$ may be improved: since $2s_{0}\geq 1$ so $\phi_{c} \in L^{\infty}$ and $D_{x}\phi_{c}\in L^{2}$. This in turn shows that $D_{x}(|\phi_{c} |^{p-1}\phi_{c} )=p|\phi_{c} |^{p-1}D_{x}\phi_{c}\in L^{2}$, implying that $|\phi_{c} |^{p-1}\phi_{c} \in H^{1}$. But then $\phi_{c} =(L-c^{2}I)^{-1}B(|\phi_{c} |^{p-1}\phi_{c} )\in H^{2s_{0}+1} \subset H^{2}$. This bootstrap argument can be repeated for larger $p$. In fact, when $p$ is odd,  $\phi_{c}\in C^{\infty}$.
\end{proof}

\noindent

\subsection{The case $\rho\leq 0$ and $g(u)=|u|^{p-1}u$  }
Throughout this subsection we assume that we are in the regime described by (\ref{case-imp}). In addition to $\rho \leq 0$ we also assume that either $\rho \leq -2$  and $r\geq 2$  or $\rho >-2 $ and $\frac{\rho }{2}+r\geq 1$. Under the assumption that $L$ and $B$ satisfy (\ref{derivestimate})-(\ref{bn-b}) the requirements of Theorem  \ref{theo2.3} are satisfied.  In what follows we take
\begin{displaymath}
    s_{0}=\frac{r}{2}.
\end{displaymath}
 The important point to note here is that $s_{0}\geq \frac{1}{2}$ for both sets of parameter values.  An immediate consequence of this fact is that the Sobolev embeddings of in the previous subsection also apply to the present case.

The crucial fact about $\mathcal{I}_{c}(\psi )$ for the present case is  that, when $\rho <0$, or when $\rho=0$ and $c^{2}$ is large, the term $\Vert B^{-1/2}\psi \Vert^{2}_{L^{2}}$ in (\ref{ic0}) dominates the others in $\mathcal{I}_{c}(\psi )$. Hence $\mathcal{I}_{c}(\psi )$  is no longer bounded from below. Nevertheless, we note that it is bounded from above for large values of $c^{2}$. This is due to the change in the sign of the nonlinear term.

Given the form of the nonlinear term, we look for a solution of the equation
\begin{equation}
    (L-c^{2}I)B^{-1}\phi_{c}+| \phi_{c} |^{p-1}\phi_{c}=0.  \label{negative}
\end{equation}%
We now define a new functional, $\mathcal{J}_{c}(\psi )$,   as the negative of  what we have considered above:
\begin{equation*}
    \mathcal{J}_{c}(\psi )=-\mathcal{I}_{c}(\psi ).
\end{equation*}
As a result, a new range of wave velocities is established to be able to prove a coercivity estimate for $\mathcal{J}_{c}(\psi )$.  The range is provided by the following lemma; the proof is very similar to that of Lemma \ref{lem3.1}.
\begin{lemma} \label{lem3.14}
    Let $c^{2}>c_{2}^{2}$. Then there are  positive constants $\gamma_{1}, \gamma_{2}$ such that
    \begin{displaymath}
    \gamma_{1}\Vert \psi \Vert_{H^{s_{0}}}^{2}\leq  \mathcal{J}_{c}(\psi )\leq \gamma_{2}\Vert \psi \Vert_{H^{s_{0}}}^{2}.
    \end{displaymath}
\end{lemma}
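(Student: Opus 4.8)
The plan is to mimic the proof of Lemma~\ref{lem3.1}, now working in the opposite regime $\rho\leq 0$ and using the coercivity constant $c_2$ of $L$ in place of $c_1$. Using Parseval's theorem and the identity $\mathcal{I}_c(\psi)=\frac12\int (l(\xi)-c^2)b^{-1}(\xi)|\widehat\psi(\xi)|^2\,d\xi$, we have
\begin{displaymath}
    \mathcal{J}_c(\psi)=-\mathcal{I}_c(\psi)=\frac12\int \big(c^2-l(\xi)\big)b^{-1}(\xi)\,|\widehat\psi(\xi)|^2\,d\xi,
\end{displaymath}
so the whole point is to obtain two-sided bounds of the form $\gamma_1'(1+\xi^2)^{-\rho/2}\leq c^2-l(\xi)\leq \gamma_2'(1+\xi^2)^{-\rho/2}$ that, combined with the bounds on $b^{-1}(\xi)$ in (\ref{uuu2}), give $\gamma_1'(1+\xi^2)^{(r-\rho)/2}\cdots$ — wait, here $s_0=r/2$, so I must be careful that the exponent lands at $r$, i.e. at $2s_0$, not at $r-\rho$.

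The key observation is that for $\rho\leq 0$ the function $(1+\xi^2)^{\rho/2}$ is bounded: $0<(1+\xi^2)^{\rho/2}\leq 1$ for all $\xi$. From (\ref{bn-a}) we then get $c_1^2(1+\xi^2)^{\rho/2}\leq l(\xi)\leq c_2^2(1+\xi^2)^{\rho/2}\leq c_2^2$, so that when $c^2>c_2^2$,
\begin{displaymath}
    0<c^2-c_2^2\leq c^2-c_2^2(1+\xi^2)^{\rho/2}\leq c^2-l(\xi)\leq c^2-c_1^2(1+\xi^2)^{\rho/2}\leq c^2.
\end{displaymath}
Hence $c^2-l(\xi)$ is bounded above and below by positive constants \emph{uniformly in} $\xi$. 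Multiplying by the bounds (\ref{uuu2}) for $b^{-1}(\xi)$, namely $c_4^{-2}(1+\xi^2)^{r/2}\leq b^{-1}(\xi)\leq c_3^{-2}(1+\xi^2)^{r/2}$, yields
\begin{displaymath}
    \frac{c^2-c_2^2}{c_4^2}(1+\xi^2)^{r/2}\leq \big(c^2-l(\xi)\big)b^{-1}(\xi)\leq \frac{c^2}{c_3^2}(1+\xi^2)^{r/2},
\end{displaymath}
and integrating against $\frac12|\widehat\psi(\xi)|^2\,d\xi$ gives exactly
\begin{displaymath}
    \frac{c^2-c_2^2}{2c_4^2}\,\Vert\psi\Vert_{H^{s_0}}^2\leq \mathcal{J}_c(\psi)\leq \frac{c^2}{2c_3^2}\,\Vert\psi\Vert_{H^{s_0}}^2,
\end{displaymath}
since $s_0=r/2$. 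This proves the lemma with $\gamma_1=(c^2-c_2^2)/(2c_4^2)>0$ and $\gamma_2=c^2/(2c_3^2)$.

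There is essentially no obstacle here: the only thing to get right is that for the lower bound we need $c^2-c_2^2(1+\xi^2)^{\rho/2}\geq c^2-c_2^2>0$, which uses $(1+\xi^2)^{\rho/2}\leq 1$ (valid precisely because $\rho\leq 0$), and the hypothesis $c^2>c_2^2$ to make the constant strictly positive. This is the mirror image of Remark~\ref{rem3.2}: just as Lemma~\ref{lem3.1} needed $\rho\geq 0$ so that $(1+\xi^2)^{\rho/2}\geq 1$ and hence $l(\xi)-c^2\geq (c_1^2-c^2)(1+\xi^2)^{\rho/2}$, here we need $\rho\leq 0$ so that the dominant $b^{-1}$ factor is not spoiled by the $l(\xi)$ term. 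One should also note for completeness that, as in Remark~\ref{rem3.3}, it follows that $\sqrt{\mathcal{J}_c(\psi)}$ is a norm on $H^{s_0}$ equivalent to the standard one whenever $c^2>c_2^2$, which will be what is needed to set up the corresponding constrained minimization problem in this regime.
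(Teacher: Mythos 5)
Your proof is correct and is essentially identical to the paper's own argument: the same chain of inequalities $c^{2}-c_{2}^{2}\leq c^{2}-l(\xi)\leq c^{2}$ (using $\rho\leq 0$ for the lower bound), combined with the bounds on $b^{-1}(\xi)$ and Parseval, yielding the same constants $\gamma_{1}=(c^{2}-c_{2}^{2})/(2c_{4}^{2})$ and $\gamma_{2}=c^{2}/(2c_{3}^{2})$. Your explicit remark on where $\rho\leq 0$ enters is a nice touch but does not change the route.
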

\begin{proof}
    From (\ref{bn-a}) we have
    \begin{displaymath}
        c^{2}-c_{2}^{2}\leq c^{2}-c_{2}^{2}(1+\xi^{2})^{\rho/2} \leq c^{2}-l(\xi) \leq c^{2}-c_{1}^{2}(1+\xi^{2})^{\rho/2}\leq c^{2}.
    \end{displaymath}
    Using this inequality and (\ref{bn-b}) with
    \begin{displaymath}
        \mathcal{J}_{c}(\psi ) = {1\over 2}\int \left(c^{2}-l(\xi )\right) b^{-1}(\xi ) |\widehat{\psi}(\xi)|^{2}d\xi
    \end{displaymath}
    gives
    \begin{displaymath}
        {{c^{2}-c_{2}^{2}}\over {2c_{4}^{2}}} \Vert \psi \Vert_{H^{s_{0}}}^{2}\leq \mathcal{J}_{c}(\psi )
            \leq {{c^{2}}\over {2c_{3}^{2}}} \Vert \psi \Vert_{H^{s_{0}}}^{2}.
    \end{displaymath}
\end{proof}
Accordingly we define a new variational problem as
\begin{equation}
    \tilde{m}_{1}(c)=\inf \{ \mathcal{J}_{c}(\psi ) : \psi \in H^{s_{0}},~~~\mathcal{Q}(\psi )=1 \}.  \label{iJ}
\end{equation}
The proof of the existence of a minimizer of $\tilde{m}_{1}(c)$ goes along the same lines as the proof of  that of $m_{1}(c)$ in the previous subsection. The only modification we need is in the decomposition of $\mathcal{J}_{c}(\psi )$. To this end, we express $\mathcal{J}_{c}(\psi )$ in the form
\begin{displaymath}
       \mathcal{J}_{c}(\psi )={1\over 2} \Vert \tilde{K}_{c}\psi \Vert^{2}+{1\over 2}\gamma_{c} \Vert \psi \Vert^{2}
\end{displaymath}
where $\tilde{K}_{c}$ is a suitable coercive operator with the symbol $\tilde{k}_{c}(\xi)$ and $\gamma_{c}$ is a positive constant again. This time the symbols satisfy
\begin{displaymath}
       \left(c^{2}-l(\xi )\right) b^{-1}(\xi )= \tilde{k}_{c}^{2}(\xi)+\gamma_{c}.
\end{displaymath}
By choosing $\gamma_{c}=(c^{2}-c_{2}^{2})/(2c_{4}^{2})>0$ we get
\begin{displaymath}
\tilde{k}_{c}^{2}(\xi)=\left(c^{2}-l(\xi )\right) b^{-1}(\xi)-{{c^{2}-c_{2}^{2}}\over {2c_{4}^{2}}}.
\end{displaymath}
It is clear that with this setting all the lemmas of the previous subsection will hold yielding the existence of minimizers $\tilde{m}_{1}(c)$.

Any minimizer $\psi$ of the variational problem (\ref{iJ}) solves the Euler-Lagrange equation
\begin{displaymath}
    (L-c^{2}I)B^{-1}\psi+\theta (p+1)| \psi |^{p-1}\psi=0,
\end{displaymath}
where $\theta$ is a Lagrange multiplier.  Then a function $\phi_{c}$ obtained by a suitable scaling of the minimizer $\psi$  will be a weak solution of (\ref{negative}). Applying the regularity argument in the proof of Theorem \ref{theo3.13} we obtain  its analogue:
\begin{theorem}\label{theo3.15}
    Assume that $\rho \leq 0$ and  that either $\rho \leq -2$  and $r\geq 2$  or $\rho >-2 $ and $\frac{\rho }{2}+r\geq 1$.   Let $c^{2}>c_{2}^{2}$ and $g(u)= |u|^{p-1}u$. Then the traveling wave solutions of (\ref{nonlocal}) exist.
\end{theorem}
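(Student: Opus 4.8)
The plan is to mirror Subsection 3.1 throughout, replacing $\mathcal{I}_c$, $K_c$, $m_1(c)$ by $\mathcal{J}_c=-\mathcal{I}_c$, $\tilde K_c$, $\tilde m_1(c)$, and invoking Lemma \ref{lem3.14} wherever Lemma \ref{lem3.1} was used. First I would record that under either hypothesis on $(\rho,r)$ one has $r\ge 1$, hence $s_0=r/2\ge\frac12$, so that the Sobolev chain $H^{s_0}\subset H^{1/2}\subset L^q$ ($q\ge 2$) is available and both $\mathcal{J}_c$ and $\mathcal{Q}$ are well defined on $H^{s_0}$; the same hypotheses guarantee that the local well-posedness Theorem \ref{theo2.3} applies, so that the profiles produced really do generate traveling waves of (\ref{nonlocal}). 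For $c^2>c_2^2$, Lemma \ref{lem3.14} shows that $\sqrt{\mathcal{J}_c}$ is a norm on $H^{s_0}$ equivalent to $\Vert\cdot\Vert_{H^{s_0}}$; consequently $\tilde m_1(c)>0$ (by the Sobolev-embedding lower bound exactly as for $m_1(c)$) and every minimizing sequence for (\ref{iJ}) is bounded in $H^{s_0}$.

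The heart of the argument is the concentration-compactness analysis of $\rho_n=\frac12|\tilde K_c\psi_n|^2+\frac12\gamma_c|\psi_n|^2$ for a minimizing sequence $\{\psi_n\}$, with $\gamma_c=(c^2-c_2^2)/(2c_4^2)>0$. Before that I must check that $\tilde K_c$, whose symbol is $\tilde k_c(\xi)=\bigl((c^2-l(\xi))b^{-1}(\xi)-\gamma_c\bigr)^{1/2}$, is a bona fide pseudo-differential operator of order $s_0=r/2$ satisfying the derivative estimates (\ref{derivestimate}). This is precisely where $\rho\le 0$ and $c^2>c_2^2$ enter: since $l(\xi)$ has order $\rho\le 0$, the factor $c^2-l(\xi)$ is bounded above by $c^2$ and below by $c^2-c_2^2>0$ for all $\xi$, so $(c^2-l(\xi))b^{-1}(\xi)$ has order $r$ and stays above $\gamma_c$, which makes $\tilde k_c$ smooth, nonvanishing and of order $r/2$; differentiating $\tilde k_c$ and using (\ref{derivestimate}) for $l$ and $b$ gives the decay of the derivatives. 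Granting this, Lemmas \ref{lem3.6}--\ref{lem3.9} apply verbatim with $\tilde K_c$ in place of $K_c$: Lemma \ref{lem3.7} rules out vanishing, the homogeneities of $\mathcal{J}_c$ and $\mathcal{Q}$ give the strict subadditivity $\tilde m_\lambda(c)+\tilde m_{1-\lambda}(c)>\tilde m_1(c)$ as in Lemma \ref{lem3.8}, and the commutator estimate of Lemma \ref{lem3.9} then rules out dichotomy as in Lemma \ref{lem3.10}. Hence compactness holds, and the argument of Theorem \ref{theo3.11} gives, after a suitable sequence of translations, a strong $H^{s_0}$-limit $\psi$ which minimizes (\ref{iJ}).

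It remains to turn $\psi$ into a solution of (\ref{negative}) with the right regularity and decay. The minimizer is a weak solution of the Euler--Lagrange equation $(L-c^2I)B^{-1}\psi+\theta(p+1)|\psi|^{p-1}\psi=0$; pairing with $\psi$ gives $\theta(p+1)=2\tilde m_1(c)>0$, so $\phi_c=[2\tilde m_1(c)]^{1/(p-1)}\psi\in H^{s_0}$ is a weak solution of (\ref{negative}). For regularity I would repeat the bootstrap of Theorem \ref{theo3.13}: since $|l(\xi)-c^2|$ is bounded and bounded away from $0$ when $c^2>c_2^2$, the operator $(L-c^2I)^{-1}B$ has order $-r=-2s_0$, and $|\phi_c|^{p-1}\phi_c\in L^2$ (because $\phi_c\in H^{s_0}\subset L^{2p}$) yields $\phi_c=(L-c^2I)^{-1}B(|\phi_c|^{p-1}\phi_c)\in H^{2s_0}$; then $2s_0\ge 1$ gives $\phi_c\in L^\infty$ and $D_x\phi_c\in L^2$, hence $|\phi_c|^{p-1}\phi_c\in H^1$ and $\phi_c\in H^{2s_0+1}$, and iterating (with $\phi_c\in C^\infty$ when $p$ is odd) yields the decay needed for $\phi_c(x-ct)$ to solve (\ref{nonlocal}).

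I expect the only genuine obstacle to be the verification that $\tilde K_c$ is a well-behaved pseudo-differential operator of order $s_0$ with decaying derivatives, since everything downstream --- vanishing, dichotomy, the compactness argument, and the regularity bootstrap --- then transfers directly from Subsection 3.1; indeed the necessity of $c^2>c_2^2$ (rather than merely $\rho\le 0$) is dictated precisely by the requirement that $c^2-l(\xi)$ stay positive and bounded away from zero uniformly in $\xi$.
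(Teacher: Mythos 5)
Your proposal is correct and follows essentially the same route as the paper: the paper likewise reduces Theorem \ref{theo3.15} to the machinery of Subsection 3.1 by passing to $\mathcal{J}_c=-\mathcal{I}_c$, using Lemma \ref{lem3.14} in place of Lemma \ref{lem3.1}, decomposing $\mathcal{J}_c(\psi)=\frac12\Vert\tilde K_c\psi\Vert^2+\frac12\gamma_c\Vert\psi\Vert^2$ with $\gamma_c=(c^2-c_2^2)/(2c_4^2)$, and then rerunning the concentration-compactness lemmas and the Euler--Lagrange/regularity bootstrap of Theorem \ref{theo3.13}. Your explicit verification that $\tilde K_c$ is a pseudo-differential operator of order $s_0=r/2$ with the decay (\ref{derivestimate}) (using $c^2-c_2^2\le c^2-l(\xi)\le c^2$) is exactly the point the paper leaves implicit, and your identification of why $c^2>c_2^2$ is needed matches the paper's reasoning.
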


\setcounter{equation}{0}
\section{Stability of traveling waves: The case $\rho\geq 0$ and $g(u)=-|u|^{p-1}u$}
\noindent

In this section we will discuss  stability  of traveling waves under the assumptions of Theorem  \ref{theo3.13}. The theorem  guarantees  that traveling waves exist for $c^{2}<c_{1}^{2}$. We will first consider orbital stability which roughly speaking, means that a  solution starting close to a traveling wave remains close to some possibly other traveling wave with the same velocity. As in \cite{levandosky}, we will prove that orbital stability occurs for a velocity $c$  if  a suitably defined  function $d$ is convex in a neighborhood of $c$. We then study the function $d(c)$ and show that it is not convex for small $c^{2}$, in other words, our method will not predict orbital stability for small $c^{2}$. Moreover, we show that the standing waves, $c=0$, are never orbitally stable. To be precise, we prove that for any standing wave we can find initial data arbitrarily close to it such that the corresponding solution of (\ref{nonlocal}) blows up in finite time.

Let $G_{c}$ denote the set of all traveling wave solutions $ \phi _{c}$ with a fixed wave velocity $c$ of (\ref{nonlocal}). We denote the corresponding set of solutions $\Phi_{c}=\left( \phi_{c}, -c\phi_{c}\right) $ of the system (\ref{sys1})-(\ref{sys2}) by
\begin{displaymath}
    {\mathcal{G}}_{c}=\left\{\Phi_{c}=(\phi_{c}, -c\phi_{c}):~\phi_{c}\in G_{c}\right\}.
\end{displaymath}
By Theorem \ref{theo2.3}, for a solution $U=(u,w)$ of the system (\ref{sys1})-(\ref{sys2}), we have $U(t)\in X=H^{s_{0}}\times H^{s_{0}-{\rho \over 2}}$. Hence, we will consider ${\mathcal{G}}_{c}$ as a subset of $X$. Notice that the space $X$ is endowed with the norm $\Vert U\Vert_{X}=\Vert u\Vert_{H^{s_{0}}}+ \Vert w\Vert_{H^{s_{0}-{\rho \over 2}}}$. We consider  orbital stability in the sense of $X-$ stability defined below.
\begin{definition}\label{def4.1}
The set $\mathcal{G}_{c}$ is said to be $X$-stable, if for any $\epsilon >0$ there exists some $\delta >0$ such that whenever
\begin{displaymath}
    \inf \left\{\Vert U_{0}-\Phi_{c}\Vert_{X} : \Phi_{c}\in \mathcal{G}_{c}\right\}<\delta,
\end{displaymath}
the solution $U(t)$ of the Cauchy problem (\ref{sys1})-(\ref{sys3}) with $U(0)=(u_{0}(x), w_{0}(x))$ exists for all $t>0$, and satisfies
\begin{displaymath}
    \sup_{t>0} \inf \left\{\Vert U(t)-\Phi_{c}\Vert_{X}:\Phi_{c}\in \mathcal{G}_{c}\right\}<\epsilon.
\end{displaymath}
\end{definition}
We recall that $\phi_{c}=\left[2m_{1}(c)\right]^{\frac{1}{p-1}}\psi_{c}$ where $\psi_{c}$ was the minimizer for $m_{1}(c)$. Then we get $\mathcal{Q }(\phi_{c} )=2\mathcal{I}_{c}(\phi_{c})=2^{\frac{p+1}{p-1}}\left[m_{1}(c)\right]^{\frac{p+1}{p-1}}$.
We begin by establishing the following relationship between the conserved quantities $\mathcal{E}$, $\mathcal{M}$ of Section 2 and the functionals $\mathcal{I}_{c}$, $\mathcal{Q }$ of Section 3.
\begin{lemma}\label{lem4.2}
    Every $\Phi_{c}\in \mathcal{G}_{c}$ is a minimizer for $\mathcal{E}(U) +c\mathcal{M}(U)$ with constraint
    \begin{displaymath}
    \mathcal{Q }(u )=2^{\frac{p+1}{p-1}}\left[m_{1}(c)\right]^{\frac{p+1}{p-1}}.
    \end{displaymath}
\end{lemma}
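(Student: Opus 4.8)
The plan is to relate the constrained minimization of $\mathcal{E}+c\mathcal{M}$ to the variational problem (\ref{m1}) via a direct computation, and then to identify the minimizers. First I would compute $\mathcal{E}(U)+c\mathcal{M}(U)$ for a general $U=(u,w)\in X$ and rewrite it in terms of $\mathcal{I}_c$ and $\mathcal{Q}$. Completing the square in the momentum variable $w$, one has
\begin{displaymath}
    \mathcal{E}(U)+c\mathcal{M}(U)
      = \tfrac{1}{2}\Vert B^{-1/2}w + c B^{-1/2}u\Vert_{L^2}^2
        + \tfrac{1}{2}\Vert L^{1/2}B^{-1/2}u\Vert_{L^2}^2
        - \tfrac{c^2}{2}\Vert B^{-1/2}u\Vert_{L^2}^2
        - \tfrac{1}{p+1}\Vert u\Vert_{L^{p+1}}^{p+1},
\end{displaymath}
so that $\mathcal{E}(U)+c\mathcal{M}(U) = \tfrac{1}{2}\Vert B^{-1/2}(w+cu)\Vert_{L^2}^2 + \mathcal{I}_c(u) - \tfrac{1}{p+1}\mathcal{Q}(u)$, using the sign $g(u)=-|u|^{p-1}u$ from (\ref{case-gen}) and the definitions (\ref{ic0}), (\ref{momentum-t}). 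The first term is nonnegative and vanishes precisely when $w=-cu$, which is exactly the relation defining $\Phi_c=(\phi_c,-c\phi_c)\in\mathcal{G}_c$; hence it suffices to minimize $\mathcal{I}_c(u)-\tfrac{1}{p+1}\mathcal{Q}(u)$ over $u\in H^{s_0}$ subject to the fixed constraint $\mathcal{Q}(u)=\Lambda$, where $\Lambda=2^{(p+1)/(p-1)}[m_1(c)]^{(p+1)/(p-1)}$.

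Next, on the constraint set $\mathcal{Q}(u)=\Lambda$ the term $-\tfrac{1}{p+1}\mathcal{Q}(u)=-\tfrac{\Lambda}{p+1}$ is constant, so minimizing $\mathcal{E}+c\mathcal{M}$ reduces to minimizing $\mathcal{I}_c(u)$ subject to $\mathcal{Q}(u)=\Lambda$. By the scaling $m_\lambda(c)=\lambda^{2/(p+1)}m_1(c)$ noted before Lemma \ref{lem3.8}, this infimum equals $\Lambda^{2/(p+1)}m_1(c) = 2 [m_1(c)]^{(p+1)/(p-1)}\cdot m_1(c)^{-1}\cdot m_1(c)$; more cleanly, $\Lambda^{2/(p+1)} = 2[m_1(c)]^{2/(p-1)}$, so the minimum value is $2[m_1(c)]^{2/(p-1)}m_1(c)=2[m_1(c)]^{(p+1)/(p-1)}$, and it is attained. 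I would then check that the minimizers are exactly the rescaled minimizers of (\ref{m1}): if $\mathcal{Q}(u)=\Lambda$ and $\mathcal{I}_c(u)=\Lambda^{2/(p+1)}m_1(c)$, set $\psi = \Lambda^{-1/(p+1)}u$, so $\mathcal{Q}(\psi)=1$ and $\mathcal{I}_c(\psi)=m_1(c)$, i.e. $\psi$ is a minimizer for $m_1(c)$; conversely $u=[2m_1(c)]^{1/(p-1)}\psi_c = \phi_c$ satisfies $\mathcal{Q}(\phi_c)=\Lambda$ and $\mathcal{I}_c(\phi_c)=\Lambda^{2/(p+1)}m_1(c)$ by the identities recorded just before the lemma. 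Combining with the vanishing-first-term analysis, $U=(u,w)$ minimizes $\mathcal{E}+c\mathcal{M}$ under the constraint if and only if $w=-cu$ and $u=\phi_c\in G_c$, i.e. $U=\Phi_c\in\mathcal{G}_c$.

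The only mildly delicate point is bookkeeping with the scaling exponents and verifying that the constrained infimum of $\mathcal{I}_c$ is genuinely attained and equals the stated value; this is immediate from Theorem \ref{theo3.11} (existence of minimizers for $m_1(c)$) together with the homogeneity of $\mathcal{I}_c$ and $\mathcal{Q}$, so no real obstacle arises. I expect the bulk of the work to be the algebraic identity $\mathcal{E}(U)+c\mathcal{M}(U) = \tfrac{1}{2}\Vert B^{-1/2}(w+cu)\Vert_{L^2}^2 + \mathcal{I}_c(u) - \tfrac{1}{p+1}\mathcal{Q}(u)$, after which the statement follows by restricting to the constraint and invoking the results of Section 3.
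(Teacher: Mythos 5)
Your proposal is correct and follows essentially the same route as the paper: the key identity $\mathcal{E}(U)+c\mathcal{M}(U)=\tfrac{1}{2}\Vert B^{-1/2}(w+cu)\Vert_{L^{2}}^{2}+\mathcal{I}_{c}(u)-\tfrac{1}{p+1}\mathcal{Q}(u)$, followed by the observation that on the constraint set the last term is constant and the infimum of $\mathcal{I}_{c}$ is attained at $\phi_{c}$ by the scaling $m_{\lambda}(c)=\lambda^{2/(p+1)}m_{1}(c)$ and the identities recorded just before the lemma. The only blemish is a bookkeeping slip: $\Lambda^{2/(p+1)}=2^{2/(p-1)}[m_{1}(c)]^{2/(p-1)}$ rather than $2[m_{1}(c)]^{2/(p-1)}$, so the constrained minimum of $\mathcal{I}_{c}$ is $2^{2/(p-1)}[m_{1}(c)]^{(p+1)/(p-1)}=\mathcal{I}_{c}(\phi_{c})$, which is consistent with $\mathcal{Q}(\phi_{c})=2\mathcal{I}_{c}(\phi_{c})$ and does not affect the argument.
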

\begin{proof}
    Combining (\ref{energy})-(\ref{momentum}) with (\ref{ic0})-(\ref{momentum-t})  yields
    \begin{displaymath}
    \mathcal{E}(U) +c\mathcal{M}(U)=\frac{1}{2}\left\Vert B^{-1/2}\left( w+cu\right) \right\Vert_{L^{2}}^{2}
                +\mathcal{I}_{c}(u) -\frac{1}{p+1}\mathcal{Q}(u).
    \end{displaymath}
    Then
    \begin{equation}
    \mathcal{E}(U) +c\mathcal{M}(U)\geq \mathcal{I}_{c}(u) -\frac{1}{p+1}\mathcal{Q}(u)
                \geq \mathcal{I}_{c}(\phi_{c}) -\frac{1}{p+1}\mathcal{Q}(\phi_{c})
                =\mathcal{E}(\Phi_{c}) +c\mathcal{M}(\Phi_{c})  \label{em-iq}
    \end{equation}
    and the result follows.
\end{proof}
It is worth pointing out that  $\Phi_{c}$ is also a minimizer for $\mathcal{E}(U) + c\mathcal{M}(U)$ subject to the constraint
\begin{equation}
    \left\Vert L^{1/2}B^{-1/2}u\right\Vert_{L^{2}}^{2}-c^{2}\left\Vert B^{-1/2}u\right\Vert_{L^{2}}^{2}  -\Vert  u\Vert_{L^{p+1}}^{p+1}
        =2\mathcal{I}_{c}(u)-\mathcal{Q}(u)=0,~~~~u \neq 0
\end{equation}
(see \cite{erbay2} for more details).

We now define the function $d(c)$ by
\begin{equation}
    d(c) = \inf \left\{ \mathcal{E}(U) +c\mathcal{M}(U) :  U\in X, ~~~
        \mathcal{Q}(u)=2^{\frac{p+1}{p-1}}\left[m_{1}(c)\right]^{\frac{p+1}{p-1}}\right\}.
\end{equation}
From Lemma \ref{lem4.2} it follows that
\begin{displaymath}
    d(c) = \mathcal{E}(\Phi_{c}) +c\mathcal{M}(\Phi_{c}),
\end{displaymath}
or
\begin{equation}
    d(c)=\left(\frac{p-1}{p+1}\right)\mathcal{I}_{c}(\phi_{c})
            ={1\over 2}\left(\frac{p-1}{p+1}\right)\mathcal{ Q}(\phi_{c})
            =2^{\frac{2}{p-1}}\left(\frac{p-1}{p+1}\right)[m_{1}(c)]^{\frac{p+1}{p-1}}.
            \label{d3}
\end{equation}

\begin{lemma} \label{lem4.3}
    Suppose $d$ is differentiable; then $d^{\prime }(c)=\mathcal{M}(\Phi_{c})$.
\end{lemma}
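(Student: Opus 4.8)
The plan is to establish the formula $d'(c) = \mathcal{M}(\Phi_c)$ by differentiating the identity $d(c) = \mathcal{E}(\Phi_c) + c\,\mathcal{M}(\Phi_c)$ from Lemma \ref{lem4.2}, treating the dependence on $c$ carefully. Write $F(U,c) = \mathcal{E}(U) + c\,\mathcal{M}(U)$, so that $d(c) = F(\Phi_c, c)$. If one naively applies the chain rule, $d'(c) = D_U F(\Phi_c,c)[\partial_c \Phi_c] + \partial_c F(\Phi_c,c)$, and the second term is exactly $\mathcal{M}(\Phi_c)$; the whole point is that the first term vanishes. The subtlety is that $\Phi_c$ is only a constrained minimizer: it minimizes $F(\cdot,c)$ subject to $\mathcal{Q}(u) = 2^{(p+1)/(p-1)}[m_1(c)]^{(p+1)/(p-1)}$, a constraint level that itself depends on $c$. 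So the variation $\partial_c \Phi_c$ need not be tangent to a fixed constraint surface, and one must use the Lagrange multiplier structure.

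Concretely, I would argue as follows. First, recall from the proof of Theorem \ref{theo3.13} (and its analogue) that a minimizer $\psi_c$ for $m_1(c)$ solves the Euler–Lagrange equation $(L - c^2 I)B^{-1}\psi_c = \theta(p+1)|\psi_c|^{p-1}\psi_c$, and after scaling, $\Phi_c = (\phi_c, -c\phi_c)$ satisfies $(L-c^2I)B^{-1}\phi_c = |\phi_c|^{p-1}\phi_c$. Equivalently, at $U = \Phi_c$ the Fréchet derivative $D_U[\mathcal{E}(U) + c\mathcal{M}(U)]$ is proportional to $D_U \mathcal{Q}(u)$: indeed $D_U[\mathcal{E} + c\mathcal{M}](\Phi_c)[\,(v,z)\,] = \langle (L-c^2I)B^{-1}\phi_c, v\rangle = \langle |\phi_c|^{p-1}\phi_c, v\rangle = \tfrac{1}{p+1}D_U\mathcal{Q}(\phi_c)[v]$. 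Next, assuming $c \mapsto \Phi_c$ is (weakly) differentiable — which is tacitly what "suppose $d$ is differentiable" buys us, together with the fact that $m_1$ is then differentiable via \eqref{d3} — differentiate the constraint identity $\mathcal{Q}(\phi_c) = 2^{(p+1)/(p-1)}[m_1(c)]^{(p+1)/(p-1)}$ in $c$ to get $D_U\mathcal{Q}(\phi_c)[\partial_c\phi_c] = \tfrac{d}{dc}\big(2^{(p+1)/(p-1)}[m_1(c)]^{(p+1)/(p-1)}\big)$. Then
\begin{align*}
d'(c) &= D_U F(\Phi_c,c)[\partial_c\Phi_c] + \mathcal{M}(\Phi_c) \\
&= \tfrac{1}{p+1}D_U\mathcal{Q}(\phi_c)[\partial_c\phi_c] + \mathcal{M}(\Phi_c) \\
&= \tfrac{1}{p+1}\,\tfrac{d}{dc}\Big(2^{\frac{p+1}{p-1}}[m_1(c)]^{\frac{p+1}{p-1}}\Big) + \mathcal{M}(\Phi_c).
\end{align*}
But from \eqref{d3}, $d(c) = \tfrac{1}{p+1}\cdot 2^{\frac{p+1}{p-1}}[m_1(c)]^{\frac{p+1}{p-1}}\cdot \tfrac{p-1}{2}\cdot\tfrac{2}{p-1}$... more directly, $d(c) = \tfrac12\big(\tfrac{p-1}{p+1}\big)\mathcal{Q}(\phi_c)$, so $\tfrac{1}{p+1}\tfrac{d}{dc}\mathcal{Q}(\phi_c) = \tfrac{2}{p-1}d'(c)$, and plugging back gives $d'(c) = \tfrac{2}{p-1}d'(c) + \mathcal{M}(\Phi_c)$, i.e. $\big(1 - \tfrac{2}{p-1}\big)d'(c) = \mathcal{M}(\Phi_c)$. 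This is the wrong constant unless one is more careful — which signals that the cleaner route is the alternative characterization noted right after Lemma \ref{lem4.2}.

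So the approach I would actually push through is the second one: use that $\Phi_c$ minimizes $\mathcal{E}(U) + c\mathcal{M}(U)$ subject to the \emph{homogeneous} constraint $2\mathcal{I}_c(u) - \mathcal{Q}(u) = 0$. This constraint surface is $c$-dependent too, but now the minimum value is pinned down differently, and one exploits that along the minimizer $\mathcal{E}(\Phi_c) + c\mathcal{M}(\Phi_c) = \big(\tfrac{p-1}{p+1}\big)\mathcal{I}_c(\phi_c)$. The robust way to finish is the standard "envelope theorem" argument of Grillakis–Shatah–Strauss / Levandosky: since $\Phi_c$ is a critical point of $U \mapsto \mathcal{E}(U) + c\mathcal{M}(U)$ on the relevant constraint manifold, and the constraint functional's variation pairs with $\partial_c\Phi_c$ in a controlled way, the interior-variation term drops and only the explicit $c$-derivative $\partial_c[c\,\mathcal{M}(U)]\big|_{U=\Phi_c} = \mathcal{M}(\Phi_c)$ survives. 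I would state this using the scaling relation $m_\lambda(c) = \lambda^{2/(p+1)}m_1(c)$ to handle the moving constraint level cleanly, reducing everything to differentiating $d(c) = 2^{2/(p-1)}\big(\tfrac{p-1}{p+1}\big)[m_1(c)]^{(p+1)/(p-1)}$ against $d(c) = \mathcal{E}(\Phi_c)+c\mathcal{M}(\Phi_c)$ and checking the cross terms cancel.

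The main obstacle is exactly this bookkeeping of the $c$-dependent constraint: one must justify that the derivative of the minimizer in the "constrained" directions contributes nothing, which requires knowing $\Phi_c$ satisfies the Euler–Lagrange equation (available from Section 3) and that the map $c \mapsto \phi_c$ has enough regularity to differentiate — the latter being implicitly granted by the hypothesis "$d$ is differentiable," or alternatively sidestepped entirely by working only with the scalar function $m_1(c)$ and the two formulas for $d(c)$, never differentiating $\phi_c$ itself. I expect the clean write-up to avoid differentiating $\Phi_c$ altogether: combine $d(c) = \big(\tfrac{p-1}{p+1}\big)\mathcal{I}_c(\phi_c)$ with the fact that $\phi_c$ minimizes, use that perturbing $c$ while keeping the minimizer fixed gives $\tfrac{\partial}{\partial c}\mathcal{I}_c(\phi_c) = -c\|B^{-1/2}\phi_c\|_{L^2}^2 = \mathcal{M}(\Phi_c)$ (since $\mathcal{M}(\Phi_c) = -c\|B^{-1/2}\phi_c\|_{L^2}^2$), and invoke the envelope principle to conclude that this partial derivative equals the total derivative $d'(c)$ up to the factor $\tfrac{p-1}{p+1}$, which is compensated by the homogeneity relation $d(c) = \tfrac12\big(\tfrac{p-1}{p+1}\big)\mathcal{Q}(\phi_c) = \big(\tfrac{p-1}{p+1}\big)\mathcal{I}_c(\phi_c)$ tying $\mathcal{I}_c(\phi_c)$ to $m_1(c)$.
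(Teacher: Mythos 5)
Your guiding idea is the right one --- differentiate $d(c)=\mathcal{E}(\Phi_c)+c\mathcal{M}(\Phi_c)$, argue that the term pairing with $\partial_c\Phi_c$ vanishes, and keep only the explicit $c$-derivative $\mathcal{M}(\Phi_c)$ --- and this is exactly what the paper does. But your execution of it contains a computational error that then sends you down an unnecessary detour. You compute
$D_U[\mathcal{E}+c\mathcal{M}](\Phi_c)[(v,z)]=\langle (L-c^2I)B^{-1}\phi_c,v\rangle$, which omits the contribution $-\langle|\phi_c|^{p-1}\phi_c,v\rangle$ coming from the nonlinear term $-\frac{1}{p+1}\Vert u\Vert_{L^{p+1}}^{p+1}$ inside $\mathcal{E}$ (and also ignores that the $z$-components cancel because $w=-c\phi_c$). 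Once that term is included, the Euler--Lagrange equation (\ref{ooo}) gives
\begin{displaymath}
D_U[\mathcal{E}+c\mathcal{M}](\Phi_c)[(v,z)]=\bigl\langle (L-c^2I)B^{-1}\phi_c-|\phi_c|^{p-1}\phi_c,\,v\bigr\rangle=0 ,
\end{displaymath}
i.e.\ the \emph{scaled} traveling wave $\Phi_c$ is an \emph{unconstrained} critical point of $\mathcal{E}+c\mathcal{M}$; the Lagrange multiplier has been absorbed by the scaling $\phi_c=[2m_1(c)]^{1/(p-1)}\psi$. Consequently no bookkeeping of the $c$-dependent constraint level is needed at all: the naive chain rule is already correct and yields $d'(c)=\mathcal{M}(\Phi_c)=-c\Vert B^{-1/2}\phi_c\Vert_{L^2}^2$ in one line. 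This is precisely the paper's proof of Lemma \ref{lem4.3}. Your ``wrong constant'' $(1-\tfrac{2}{p-1})d'(c)=\mathcal{M}(\Phi_c)$ is an artifact of the dropped term, not a sign that the first route fails.

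Your proposed repair is also not sound as stated. Applying the envelope principle to $d(c)=\bigl(\tfrac{p-1}{p+1}\bigr)\mathcal{I}_c(\phi_c)$ would give $d'(c)=\bigl(\tfrac{p-1}{p+1}\bigr)\mathcal{M}(\Phi_c)$, again the wrong constant; the envelope argument must be applied to the functional $\mathcal{I}_c-\tfrac{1}{p+1}\mathcal{Q}$ (of which $\phi_c$ is a genuine critical point), not to $\mathcal{I}_c$ alone (whose only critical point is $0$), and the numerical identity $\mathcal{I}_c(\phi_c)-\tfrac{1}{p+1}\mathcal{Q}(\phi_c)=\bigl(\tfrac{p-1}{p+1}\bigr)\mathcal{I}_c(\phi_c)$ does not survive differentiation through $\phi_c$. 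If you want a route that genuinely avoids differentiating $\phi_c$, the correct one is the scalar route you mention in passing: by Lemma \ref{lem4.6}(iv), $m_1'(c)=-c\Vert B^{-1/2}\psi_c\Vert_{L^2}^2$ (here the constraint $\mathcal{Q}(\psi_c)=1$ is $c$-independent, so the envelope theorem applies cleanly), and then differentiating (\ref{d3}) together with $\phi_c=[2m_1(c)]^{1/(p-1)}\psi_c$ gives $d'(c)=-c\Vert B^{-1/2}\phi_c\Vert_{L^2}^2=\mathcal{M}(\Phi_c)$. Either fix closes the gap; as written, neither of your two computations reaches the stated identity.
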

\begin{proof}
    We have
    \begin{eqnarray*}
    d^{\prime }(c) &=&\frac{d}{dc}\int \left[ \frac{1}{2}\left(L^{1/2}B^{-1/2}\phi_{c}\right)^{2}
                     -\frac{c^{2}}{2}\left(B^{-1/2}\phi_{c}\right)^{2}
                     -\frac{1}{p+1}|\phi_{c}|^{p+1}\right] dx,   \\
     ~             &=& \int \left[\left(L-c^{2}I\right)B^{-1}\phi_{c}
                        -|\phi_{c}|^{p-1}\phi_{c}\right]\frac{d\phi_{c}}{dc}dx
                        -\int c\left(B^{-1/2}\phi_{c}\right)^{2}dx.
    \end{eqnarray*}
    Since $(L-c^{2}I)B^{-1}\phi _{c}-|\phi _{c}|^{p-1}\phi _{c}=0$ (see \ref{ooo}), we have the desired result;
    \begin{equation}
        d^{\prime }(c)=-\int c\left(B^{-1/2}\phi_{c}\right)^{2}dx=\mathcal{M}(\Phi_{c}). \label{dc1}
    \end{equation}
\end{proof}
As $\mathcal{M}(\Phi_{c})=-c \left\Vert B^{-1/2}\phi_{c}\right\Vert_{L^{2}}^{2}$, it follows from (\ref{dc1}) that, whenever differentiable on some interval not containing the origin,  the function $d(c)$ is monotone on the interval. We can state now the main result on orbital stability.
\begin{theorem}\label{theo4.4}
    Let $\rho \geq 0$, $r+{\rho \over 2}\geq 1$, $(\rho, r)\neq (0,1)$ and $c^{2}<c_{1}^{2}$. Suppose $d$ is differentiable and strictly convex on some interval $J$ containing $c$. Then the set $\mathcal{G}_{c}$ is $X-$stable.
\end{theorem}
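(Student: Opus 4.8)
The plan is to follow the classical Cazenave–Lions / Grillakis–Shatah–Strauss strategy, in the streamlined concentration–compactness form used by Levandosky \cite{levandosky}, adapted to the pair of conserved quantities $\mathcal{E}$ and $\mathcal{M}$ and the constraint functional $\mathcal{Q}$. The key structural facts are already in hand: by Lemma~\ref{lem4.2} every $\Phi_c \in \mathcal{G}_c$ minimizes $\mathcal{E}+c\mathcal{M}$ on the constraint set $\{\mathcal{Q}(u)=q(c)\}$ with $q(c)=2^{(p+1)/(p-1)}[m_1(c)]^{(p+1)/(p-1)}$, and $d(c)=\mathcal{E}(\Phi_c)+c\mathcal{M}(\Phi_c)$, with $d'(c)=\mathcal{M}(\Phi_c)$ by Lemma~\ref{lem4.3}. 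I would argue by contradiction: suppose $\mathcal{G}_c$ is not $X$-stable. Then there exist $\epsilon_0>0$, a sequence of initial data $U_{0,n}\in X$ with $\inf_{\Phi_c\in\mathcal{G}_c}\|U_{0,n}-\Phi_c\|_X\to 0$, and times $t_n$ such that the solutions $U_n(t_n)$ satisfy $\inf_{\Phi_c\in\mathcal{G}_c}\|U_n(t_n)-\Phi_c\|_X\ge\epsilon_0$. Write $V_n=U_n(t_n)$. Since $\mathcal{E}$ and $\mathcal{M}$ are continuous on $X$ and conserved in time, $\mathcal{E}(V_n)+c\mathcal{M}(V_n)=\mathcal{E}(U_{0,n})+c\mathcal{M}(U_{0,n})\to\mathcal{E}(\Phi_c)+c\mathcal{M}(\Phi_c)=d(c)$ and similarly $\mathcal{Q}((V_n)_1)\to q(c)$.

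The next step is to massage $\{V_n\}$ into an (almost) minimizing sequence for the constrained problem defining $d(c)$, and in particular to reduce matters to a minimizing sequence for $m_1(c)$ of Section~3, so that Theorem~\ref{theo3.11} applies. From the identity $\mathcal{E}(U)+c\mathcal{M}(U)=\tfrac12\|B^{-1/2}(w+cu)\|_{L^2}^2+\mathcal{I}_c(u)-\tfrac1{p+1}\mathcal{Q}(u)$ used in Lemma~\ref{lem4.2}, and using $\mathcal{Q}((V_n)_1)\to q(c)$ together with the scaling $m_\lambda(c)=\lambda^{2/(p+1)}m_1(c)$, one sees that $\tfrac12\|B^{-1/2}((V_n)_2+c(V_n)_1)\|_{L^2}^2 \to 0$ and that $(V_n)_1$, after rescaling to exactly satisfy $\mathcal{Q}=1$, is a minimizing sequence for $m_1(c)$. (Here Lemma~\ref{lem3.1}, i.e. the coercivity $\gamma_1\|\cdot\|_{H^{s_0}}^2\le\mathcal{I}_c\le\gamma_2\|\cdot\|_{H^{s_0}}^2$ valid for $c^2<c_1^2$, gives the uniform $H^{s_0}$ bound and is what prevents the variational structure from degenerating.) Theorem~\ref{theo3.11} then yields translates $(V_n)_1(\cdot+y_n)$ converging strongly in $H^{s_0}$ to a minimizer of $m_1(c)$, hence — after undoing the rescaling and invoking Theorem~\ref{theo3.13} — to some $\phi_c\in G_c$ (with the strong $H^{s_0}$ convergence coming from Remark~\ref{rem3.12}). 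Combined with $\|B^{-1/2}((V_n)_2+c(V_n)_1)\|_{L^2}\to 0$ and the coercivity of $B^{-1/2}$ on $H^{s_0-\rho/2}$ (from \eqref{bn-b}), this forces $(V_n)_2(\cdot+y_n)\to -c\phi_c$ in $H^{s_0-\rho/2}$, so $V_n(\cdot+y_n)\to\Phi_c$ in $X$, i.e. $\inf_{\Phi_c\in\mathcal{G}_c}\|V_n-\Phi_c\|_X\to 0$, contradicting $\ge\epsilon_0$.

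The role of strict convexity of $d$ — the hypothesis not yet used — is to guarantee the minimizer one recovers along the recovered subsequence genuinely has wave speed $c$ and not some nearby $c'$, which is exactly the subtlety that distinguishes orbital stability from mere compactness of minimizing sequences. The point is that the conserved level $\mathcal{E}(V_n)+c'\mathcal{M}(V_n)$ is pinned at $d(c)$ for the true speed $c$, and one must preclude drift in the speed; strict convexity of $d$ (equivalently strict monotonicity of $d'=\mathcal{M}(\Phi_{(\cdot)})$) together with the fact that $\mathcal{M}(V_n)\to\mathcal{M}(\Phi_c)=d'(c)$ separates the constraint levels $q(c')$ for $c'\ne c$ near $c$ from $q(c)$, so any subsequential limit must sit exactly on $\mathcal{G}_c$. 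Concretely I would show: if not, one extracts a limit $\Phi_{c'}\in\mathcal{G}_{c'}$ with $c'$ in the interval $J$ and $c'\ne c$, yet with $\mathcal{E}(\Phi_{c'})+c\mathcal{M}(\Phi_{c'})=d(c)$ and $\mathcal{Q}(\phi_{c'})=q(c)$; but $\mathcal{E}(\Phi_{c'})+c'\mathcal{M}(\Phi_{c'})=d(c')$ forces $(c-c')\mathcal{M}(\Phi_{c'})=d(c)-d(c')$, and $\mathcal{M}(\Phi_{c'})=d'(c')$, so $d(c)=d(c')+(c-c')d'(c')$, which contradicts strict convexity of $d$ on $J$ unless $c'=c$. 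The excluded parameter value $(\rho,r)=(0,1)$ and the standing restriction $r+\rho/2\ge1$ enter only through the applicability of the local well-posedness Theorem~\ref{theo2.3} and the coercivity lemmas, ensuring $X$ is the right phase space and solutions persist.

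The main obstacle I anticipate is the passage in the second paragraph: extracting from the bare convergence $\mathcal{E}(V_n)+c\mathcal{M}(V_n)\to d(c)$, $\mathcal{Q}((V_n)_1)\to q(c)$ a bona fide minimizing sequence for $m_1(c)$ to which Theorem~\ref{theo3.11} literally applies. One has to (i) rescale $(V_n)_1$ so the constraint is met exactly, checking the rescaling factors tend to $1$; (ii) argue the cross term $\|B^{-1/2}((V_n)_2+c(V_n)_1)\|_{L^2}^2$ vanishes rather than merely stays bounded, which uses that $\mathcal{I}_c(u)-\tfrac1{p+1}\mathcal{Q}(u)\ge d(c)$ on the constraint manifold with equality only at translates of $\phi_c$ — this is the ``$\mathcal{E}+c\mathcal{M}$ sees the extra positive term'' observation and needs $m_1(c)$ to be attained; and (iii) handle the reconstruction of the $w$-component from the vanishing of that cross term. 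Once these are in place, the concentration–compactness machinery of Section~3 does the rest and the convexity argument closes the loop.
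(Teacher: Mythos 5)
Your overall architecture --- argue by contradiction, use the conserved quantities to turn $u_n(t_n)$ into a minimizing sequence for $m_1(c)$, invoke Theorem \ref{theo3.11} to get strong $H^{s_0}$ convergence to a traveling wave, and recover the $w$-component from the vanishing of $\Vert B^{-1/2}(w+cu)\Vert_{L^2}$ --- is the same as the paper's. But there is a genuine gap at the pivotal step: you assert that $\mathcal{Q}(u_n(t_n))\to q(c)$ (your $q(c)=2^{(p+1)/(p-1)}[m_1(c)]^{(p+1)/(p-1)}$) ``similarly'' to $\mathcal{E}+c\mathcal{M}$, i.e.\ by conservation and continuity. $\mathcal{Q}$ is \emph{not} a conserved quantity of the flow, so conservation gives you nothing about $\mathcal{Q}$ at time $t_n$; and the identity $\mathcal{E}+c\mathcal{M}=\tfrac12\Vert B^{-1/2}(w+cu)\Vert_{L^2}^2+\mathcal{I}_c(u)-\tfrac{1}{p+1}\mathcal{Q}(u)$ together with the homogeneity bound $\mathcal{I}_c(u)\ge \mathcal{Q}(u)^{2/(p+1)}m_1(c)$ only yields $f(\mathcal{Q}(u_n(t_n)))\le d(c)+o(1)$, where $f(\lambda)=\lambda^{2/(p+1)}m_1(c)-\lambda/(p+1)$ attains its \emph{maximum} $d(c)$ precisely at $\lambda=q(c)$ --- an inequality satisfied by every $\lambda\ge 0$, hence pinning down nothing. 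What one actually gets, by choosing $t_n$ as an exit time so that $\inf_{\Phi_c}\Vert U_n(t_n)-\Phi_c\Vert_X=\epsilon$ exactly, is only $|\mathcal{Q}(u_n(t_n))-q(c)|=O(\epsilon)$, a fixed nonzero discrepancy. Closing this gap is exactly where strict convexity must do work: the paper defines $c_n$ by $\mathcal{Q}(u_n(t_n))=q(c_n)$ (possible since $d$ is monotone on $J$), applies Lemma \ref{lem4.2} at speed $c_n$ to get $\mathcal{E}(U_n(t_n))+c_n\mathcal{M}(U_n(t_n))\ge d(c_n)$, writes $d(c_n)=d(c)+d'(c)(c_n-c)+\int_c^{c_n}[d'(s)-d'(c)]\,ds$, and uses the conservation laws together with Lemma \ref{lem4.3} to force the strictly positive integral to vanish in the limit, whence $c_n\to c$ and only then $\mathcal{Q}(u_n(t_n))\to q(c)$.

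Relatedly, your account of where convexity enters is misplaced. In your setup, once you (incorrectly) have $\mathcal{Q}(u_n(t_n))\to q(c)$ and a genuine minimizing sequence for $m_1(c)$, Theorem \ref{theo3.11} already delivers a limit in $G_c$; there is no mechanism by which the limit could land in $\mathcal{G}_{c'}$ with $c'\ne c$, so your concluding convexity argument is applied to an object your construction never produces, and convexity would appear dispensable. That should itself be a warning sign: the paper shows $d$ fails to be convex for small $c$ and that $c=0$ is unstable by blow-up, so any proof in which convexity carries no load cannot be complete. The remaining ingredients you list (rescaling to meet the constraint exactly, vanishing of the cross term, recovering $w\to -c\phi_c$ in $H^{s_0-\rho/2}$ from coercivity of $B^{-1/2}$, and the role of $(\rho,r)\ne(0,1)$ in keeping $s_0>\tfrac12$) are correct and match the paper; you should also record, as the paper does, that boundedness of the trajectory in $X$ is what allows the a priori local solution to be continued to all $t>0$, as required by Definition \ref{def4.1}.
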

\begin{proof}
    Suppose that $\mathcal{G}_{c}$ is $X-$unstable. Then there are some $\epsilon >0$,  initial data $U_{n}(0)$ and points $t_{n}>0$ such that
    \begin{displaymath}
    \inf_{\Phi_{c} \in \mathcal{G}_{c}}\Vert U_{n}(0)-\Phi_{c} \Vert _{X}<\frac{1}{n}     \label{t0}
    \end{displaymath}%
    but
    \begin{displaymath}
    \inf_{\Phi_{c} \in \mathcal{G}_{c}}\Vert U_{n}(t_{n})-\Phi_{c} \Vert_{X}\geq \epsilon,
    \end{displaymath}%
    where $U_{n}(t)=(u_{n}(t), w_{n}(t))$ is the solution of the Cauchy problem (\ref{sys1})-(\ref{sys3}) with $U_{n}(0)=(u_{n}(0), w_{n}(0))$. By continuity of $U_{n}(t)$ we can take $\epsilon $ sufficiently small and choose $t_{n}$ such that
    \begin{displaymath}
    \inf_{\Phi_{c} \in \mathcal{G}_{c}}\Vert U_{n}(t_{n})-\Phi_{c} \Vert_{X}=\epsilon .
    \end{displaymath}
    In addition to this, we also choose $\Phi_{c}^{n} \in \mathcal{G}_{c}$ such that
    \begin{displaymath}
    \lim_{n \rightarrow \infty}\Vert U_{n}(0)-\Phi_{c}^{n} \Vert_{X}=0 .
    \end{displaymath}
    Since the invariants $\mathcal{E}$ and $\mathcal{M}$ are continuous on $X$, we have
    \begin{eqnarray*}
    && \lim_{n\rightarrow \infty }\mathcal{E}(U_{n}(t_{n}))=\lim_{n\rightarrow \infty }\mathcal{E}(U_{n}(0))=\mathcal{E}(\Phi_{c}^{n}),      \\
    && \lim_{n\rightarrow \infty }\mathcal{M}(U_{n}(t_{n}))=\lim_{n\rightarrow \infty }\mathcal{M}(U_{n}(0))=\mathcal{M}(\Phi_{c}^{n}),
    \end{eqnarray*}%
    noting that the terms on the right-hand side are independent of $n$.
    By taking $\epsilon $ to be sufficiently small, we can make the values of $u_{n}(t_{n})$ arbitrarily close to $\phi_{c}^{n}$ and consequently the values of   $\mathcal{Q}(u_{n}(t_{n}))$  arbitrarily close to $\mathcal{Q}(\phi_{c}^{n})=2\left(\frac{p+1}{p-1}\right)d(c)$.   Since $d( c)$ is monotone on $J$, for each $n$, there is a unique $c_{n}$ satisfying
    \begin{displaymath}
     \mathcal{Q}( u_{n}( t_{n}) )=\mathcal{Q}( \phi_{c_{n}} )=2\left(\frac{p+1}{p-1}\right)d(c_{n}),
    \end{displaymath}
     for the traveling wave solution  $\phi_{c_{n}}$. This means  $\mathcal{Q}( u_{n}( t_{n}) )= \mathcal{Q}( \phi_{c_{n}} )=2^{\frac{p+1}{p-1}}[m_{1}(c_{n})]^{\frac{p+1}{p-1}}$. By Lemma \ref{lem4.2} we have
    \begin{equation}
    \mathcal{E}(U_{n}(t_{n}))+c_{n}\mathcal{M}(U_{n}(t_{n})) \geq d(c_{n}) . \label{em-dcn}
    \end{equation}
    On the other hand, we can write
    \begin{equation}
        d( c_{n}) =d( c) +d^{\prime }( c) (c_{n}-c) +\int_{c}^{c_{n}}\left[ d^{\prime}( s) -d^{\prime}( c) \right] ds. \label{dcn}
    \end{equation}
    By assumption, $d$ is strictly convex and consequently $d^{\prime }$ is strictly increasing. From this, it follows that the integral on the right-hand side is positive for $c \neq c_{n}$.   Using Lemma \ref{lem4.3}, we have
    \begin{eqnarray*}
        d( c) +d^{\prime }( c) ( c_{n}-c) &=& \mathcal{E}\left( \Phi_{c}^{n}\right) + c\mathcal{M}\left( \Phi_{c}^{n}\right) + \mathcal{M}\left( \Phi_{c}^{n}\right) ( c_{n}-c) \\
        ~                                 &=& \mathcal{E}\left( \Phi_{c}^{n}\right) + c_{n}\mathcal{M}\left( \Phi_{c}^{n}\right).
    \end{eqnarray*}
    Combining this with (\ref{em-dcn}) and (\ref{dcn}) yields
    \begin{displaymath}
        \mathcal{E}(U_{n}(t_{n}))+c_{n}\mathcal{M}(U_{n}(t_{n}))
            \geq \mathcal{E}\left( \Phi _{c}^{n}\right)+c_{n}\mathcal{M}\left( \Phi _{c}^{n}\right)
                +\int_{c}^{c_{n}}\left[ d^{\prime}(s) -d^{\prime}(c) \right] ds,
     \end{displaymath}
     or
     \begin{displaymath}
        \mathcal{E}(U_{n}(t_{n}))-\mathcal{E}\left( \Phi_{c}^{n}\right) +c_{n}\left( \mathcal{M}(U_{n}(t_{n})) -\mathcal{M}( \Phi_{c}^{n})\right)
                \geq  \int_{c}^{c_{n}}\left[ d^{\prime }( s) -d^{\prime}(c) \right] ds.
    \end{displaymath}
    But as $n\rightarrow \infty $, the left-hand side of the inequality converges to zero. As $d^{\prime }( s)$ is strictly  increasing this is possible only when $\lim_{n\rightarrow \infty }c_{n}=c$. Continuity of $ d$ implies that
    \begin{displaymath}
        \lim_{n\rightarrow \infty }\mathcal{Q}(u_{n}(t_{n}))
            =\lim_{n\rightarrow \infty}2\left(\frac{p+1}{p-1}\right)d(c_{n})= 2\left(\frac{p+1}{p-1}\right)d(c)=\mathcal{Q}(\phi_{c}^{n}).
    \end{displaymath}
    Taking the limit of both sides of the following inequality as $n\rightarrow \infty $
    \begin{displaymath}
     \mathcal{I}_{c}(u_{n}(t_{n}))-\frac{1}{p+1} \mathcal{Q}(u_{n}(t_{n}))
       \leq \mathcal{E}(U_{n}(t_{n}))+c \mathcal{M}(U_{n}(t_{n})),
    \end{displaymath}
    and using (\ref{d3}) we get
     \begin{displaymath}
    \lim_{n\rightarrow \infty }\mathcal{I}_{c}(u_{n}(t_{n}))\leq \lim_{n\rightarrow \infty } {2\over {p-1}}d(c_{n})+d(c)={{p+1}\over {p-1}}d(c)
    \end{displaymath}
    or
    \begin{displaymath}
    \lim_{n\rightarrow \infty }\mathcal{I}_{c}(u_{n}(t_{n}))\leq \mathcal{I}_{c}(\phi_{c}).
    \end{displaymath}
    This result implies that $\{u_{n}(t_{n})\}$ is a minimizing sequence. By the existence theorem of  traveling waves solutions, Theorem \ref{theo3.13}, there is a shifted subsequence  that converges in $H^{s_{0}}$ to some $\phi_{c}^{0}\in G_{c}$. We further note that
    \begin{displaymath}
    \frac{1}{2}\left\Vert B^{-1/2}\left( w_{n}(t_{n})+cu_{n}(t_{n})\right)\right\Vert_{L^{2}}^{2}
           =\mathcal{E}(U_{n}(t_{n}))+c\mathcal{M}(U_{n}(t_{n}))+\frac{1}{p+1}\mathcal{Q}(u_{n}(t_{n}))-\mathcal{I}_{c}(u_{n}(t_{n}))
    \end{displaymath}%
    converges to zero as $n\rightarrow \infty$. This gives $\lim_{n\rightarrow \infty }\left(w_{n}(t_{n})+cu_{n}(t_{n})\right)=0$  in $H^{s_{0}-\frac{\rho}{2}}$. Therefore, a shifted subsequence of $U_{n}(t_{n})$ converges in $X$ to $\Phi_{c}^{0}=(\phi _{c}^{0}, -c\phi _{c}^{0})$. In conclusion, we have
    \begin{displaymath}
    \inf_{\phi \in G_{c}}\Vert U_{n}(t_{n})-\Phi_{c} \Vert_{X}=0,
    \end{displaymath}%
    which contradicts our assumption. Note that $s_{0}={r \over 2}+{\rho \over 2}>{1\over 2}$ when $(\rho, r)\neq (0, 1)$. Hence  Theorem \ref{theo2.3} guarantees local well-posedness in $H^{s_{0}}\times H^{s_{0}-{\rho \over 2}}$. The above argument, at first attempt, can only hold locally, i.e. for $0 \leq t  <T$. On the other hand, the same argument  shows that $U(t)$ stays bounded in $H^{s_{0}}\times H^{s_{0}-{\rho \over 2}}$; hence can be  continued beyond $T$. This in fact shows that $U(t)$ is indeed global and stays close to the orbit for all times.
 \end{proof}
 \begin{remark}\label{rem4.5}
      In the case $(\rho, r)=(0, 1)$, namely, $s_{0}={1\over 2}$, the above proof shows that we have a weaker version of orbital stability in the following sense: If the initial data $U(0)\in H^{s}\times H^{s}$ (for some $s>{1\over 2}$) is close to the orbit in the weaker $H^{1\over 2}\times H^{1\over 2}$ norm,  then the  solution, as long as as it exists, remains close to the orbit in the same norm.
\end{remark}

 We now discuss convexity of $d(c)$. To this end we  investigate more closely the properties of $m_{1}(c)$. Let $M_{c}$ denote the set of minimizers for $m_{1}( c)$:
\begin{displaymath}
    M_{c}=\left \{\psi \in H^{s_{0}}:~ \mathcal{Q}(\psi )=1,~~~~\mathcal{I}_{c}(\psi)=m_{1}(c)\right\}.
\end{displaymath}
As $m_{1}( c)$ is an even function, it suffices to consider the interval $[0, c_{1})$.
\begin{lemma} \label{lem4.6}
    On the interval  $[0, c_{1})$ where $c_{1}$ is the coercivity constant of $L$, the following statements hold.
    \begin{enumerate}
    \item[(i)] The map $m_{1}(c) $ is strictly decreasing.
    \item[(ii)] The maps
        \begin{displaymath}
            \alpha^{-}( c) =\inf \left\{ \left\Vert B^{-{1/2}}\psi_{c}\right\Vert^{2}_{L^{2}} : \psi_{c}\in M_{c}\right\}, ~~~~
            \alpha^{+}( c) =\sup \left\{ \left\Vert B^{-{1/2}}\psi_{c}\right\Vert^{2}_{L^{2}} : \psi_{c}\in M_{c}\right\}
        \end{displaymath}%
    are strictly increasing.
    \item[(iii)] Except for countably many points, $\alpha^{-}( c) =\alpha^{+}( c) $ hence $\left\Vert B^{-{1/2}}\psi_{c}\right\Vert^{2}_{L^{2}}$ is constant on $M_{c}$.
    \item[(iv)] The map $m_{1}( c) $ is continuous on $[0,c_{1}) $, is differentiable and $m_{1}^{\prime}( c) =-c\left\Vert B^{-{1/ 2}}\psi_{c}\right\Vert^{2}_{L^{2}}$ at all points where $\alpha^{-}( c) =\alpha^{+}( c) $.
    \item[(v)] The map $m_{1}\left( c\right) $ is concave.
    \end{enumerate}
\end{lemma}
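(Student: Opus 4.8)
The engine is the elementary identity $\mathcal{I}_{c}(\psi)=\mathcal{I}_{0}(\psi)-\tfrac{c^{2}}{2}\Vert B^{-1/2}\psi\Vert_{L^{2}}^{2}$ on $H^{s_{0}}$: for fixed $\psi$ with $\mathcal{Q}(\psi)=1$ the map $c\mapsto\mathcal{I}_{c}(\psi)$ is a downward quadratic in $c$, with derivative $-c\Vert B^{-1/2}\psi\Vert_{L^{2}}^{2}$. I will use that $M_{c}\neq\emptyset$ (Theorem \ref{theo3.11}), that every $\psi_{c}\in M_{c}$ obeys $\gamma_{1}\Vert\psi_{c}\Vert_{H^{s_{0}}}^{2}\le m_{1}(c)$ (Lemma \ref{lem3.1}), and that $\psi\mapsto\Vert B^{-1/2}\psi\Vert_{L^{2}}^{2}$ is a bounded quadratic form on $H^{s_{0}}$ (since $s_{0}\ge r/2$ and $b^{-1}(\xi)\le c_{3}^{-2}(1+\xi^{2})^{r/2}$). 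Two soft consequences of the concentration--compactness argument behind Theorem \ref{theo3.11} are needed: (a) the inf and sup defining $\alpha^{\pm}(c)$ are attained — a sequence in $M_{c}$ realizing them is a minimizing sequence, so a subsequence of its translates converges in $H^{s_{0}}$ to a minimizer — hence $0<\alpha^{-}(c)\le\alpha^{+}(c)<\infty$; and (b) limits of minimizers along velocities $c_{n}\to c<c_{1}$ are again minimizers at $c$, which makes $\alpha^{-}$ left-continuous and $\alpha^{+}$ right-continuous and, together with the relation in (ii) below, forces $\alpha^{-},\alpha^{+}$ to be continuous at every point where they agree.

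\emph{(i), (v), and the continuity assertion of (iv).} For $0\le a<b<c_{1}$ and $\psi_{a}\in M_{a}$, the identity gives $m_{1}(b)\le\mathcal{I}_{b}(\psi_{a})=m_{1}(a)-\tfrac{b^{2}-a^{2}}{2}\Vert B^{-1/2}\psi_{a}\Vert_{L^{2}}^{2}<m_{1}(a)$, which is (i). Writing $m_{1}(c)=\inf\{\mathcal{I}_{c}(\psi):\mathcal{Q}(\psi)=1\}$ as a pointwise infimum of concave functions of $c$, finite on $(-c_{1},c_{1})$ by Lemma \ref{lem3.1}, shows $m_{1}$ is concave there — this is (v) — and therefore continuous on $[0,c_{1})$.

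\emph{(ii).} For $a<b$ and arbitrary $\psi_{a}\in M_{a}$, $\psi_{b}\in M_{b}$, adding $m_{1}(b)\le\mathcal{I}_{b}(\psi_{a})$ and $m_{1}(a)\le\mathcal{I}_{a}(\psi_{b})$ and cancelling leaves $\Vert B^{-1/2}\psi_{b}\Vert_{L^{2}}^{2}\ge\Vert B^{-1/2}\psi_{a}\Vert_{L^{2}}^{2}$, with equality only if $\psi_{a}\in M_{b}$ and $\psi_{b}\in M_{a}$; passing to sup over $\psi_{a}$ and inf over $\psi_{b}$ gives $\alpha^{-}(b)\ge\alpha^{+}(a)$, so $\alpha^{\pm}$ are non-decreasing. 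For strictness, suppose $\alpha^{+}(a)=\alpha^{-}(b)$ with $a<b$; choosing (attainment) $\psi_{a}\in M_{a}$ and $\psi_{b}\in M_{b}$ realizing these values, the equality case forces $\psi_{a}\in M_{a}\cap M_{b}$. But no profile minimizes $m_{1}$ at two distinct velocities: subtracting the Euler--Lagrange equations $(L-a^{2}I)B^{-1}\psi_{a}=2m_{1}(a)|\psi_{a}|^{p-1}\psi_{a}$ and $(L-b^{2}I)B^{-1}\psi_{a}=2m_{1}(b)|\psi_{a}|^{p-1}\psi_{a}$ gives $B^{-1}\psi_{a}=\lambda|\psi_{a}|^{p-1}\psi_{a}$ with $\lambda>0$ by (i), and feeding this back into the first equation gives $L(|\psi_{a}|^{p-1}\psi_{a})=\kappa\,|\psi_{a}|^{p-1}\psi_{a}$ for a constant $\kappa$; thus the nonzero $L^{2}$ function $|\psi_{a}|^{p-1}\psi_{a}$ would be an eigenfunction of the translation-invariant operator $L$, which is impossible unless $L$ is a scalar multiple of $I$ — and that degenerate case is precisely the Klein--Gordon situation treated in Section 5. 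Hence $\alpha^{-}(b)>\alpha^{+}(a)$ whenever $a<b$, so $\alpha^{-}$ and $\alpha^{+}$ are strictly increasing.

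\emph{(iii) and the derivative assertion of (iv).} Inserting a fixed $\psi_{c}\in M_{c}$ into $m_{1}(c+h)\le\mathcal{I}_{c+h}(\psi_{c})$, and a minimizer $\psi_{c+h}$ for $m_{1}(c+h)$ into $m_{1}(c)\le\mathcal{I}_{c}(\psi_{c+h})$, produces the envelope bracket
\[
  -\Bigl(ch+\tfrac{h^{2}}{2}\Bigr)\Vert B^{-1/2}\psi_{c+h}\Vert_{L^{2}}^{2}\ \le\ m_{1}(c+h)-m_{1}(c)\ \le\ -\Bigl(ch+\tfrac{h^{2}}{2}\Bigr)\Vert B^{-1/2}\psi_{c}\Vert_{L^{2}}^{2}.
\]
Dividing by $h$ and letting $h\to0^{\pm}$: at a point $c\in(0,c_{1})$ where $m_{1}'(c)$ exists the right-hand term forces $-c\alpha^{-}(c)\le m_{1}'(c)\le-c\alpha^{+}(c)$, which with $\alpha^{-}\le\alpha^{+}$ and $c>0$ yields $\alpha^{-}(c)=\alpha^{+}(c)$ and $m_{1}'(c)=-c\Vert B^{-1/2}\psi_{c}\Vert_{L^{2}}^{2}$; since $m_{1}$, being concave, is differentiable off a countable set, this proves (iii). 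Conversely, at any $c$ with $\alpha^{-}(c)=\alpha^{+}(c)$ both $\alpha^{\pm}$ are continuous there by consequence (b), so both sides of the bracket have the same one-sided limits $-c\Vert B^{-1/2}\psi_{c}\Vert_{L^{2}}^{2}$, whence $m_{1}$ is differentiable at $c$ with $m_{1}'(c)=-c\Vert B^{-1/2}\psi_{c}\Vert_{L^{2}}^{2}$; at $c=0$ the factor $c$ collapses the bracket and gives $m_{1}'(0)=0$. This completes (iv).

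\emph{Main obstacle.} Everything except the strict monotonicity in (ii) is routine manipulation of the displayed identity and of concave functions. The genuine difficulty is (ii): the infimum estimates only deliver that $\alpha^{\pm}$ are non-decreasing, and excluding a constant stretch forces one to show that a single profile cannot be a minimizer for two wave velocities — which is where the structure of the Euler--Lagrange equation and the absence of $L^{2}$-eigenfunctions of $L$ become essential.
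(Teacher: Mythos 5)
Your proposal is correct in substance and, for parts (i), (iii) and (iv), runs on the same engine as the paper: test a minimizer for one velocity in the functional at a nearby velocity and squeeze the resulting two-sided bound on $m_{1}(c+h)-m_{1}(c)$. The paper's version of your ``envelope bracket'' is the displayed chain of inequalities at the start of its proof, and its proof of (iii) replaces your ``a concave function is differentiable off a countable set'' by the equivalent observation that the intervals $\left[\alpha^{-}(c),\alpha^{+}(c)\right]$ have disjoint interiors. You genuinely diverge in two places. For (v) you get concavity directly, as a pointwise infimum of the concave functions $c\mapsto\mathcal{I}_{c}(\psi)$, whereas the paper deduces it a posteriori from the monotonicity of $m_{1}^{\prime}$ coming out of (ii)--(iv); your route is shorter and, usefully, independent of the delicate point below. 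For (ii) the paper simply writes $\mathcal{I}_{\tilde{c}}(\psi_{c})>m_{1}(\tilde{c})$ with a strict inequality, i.e.\ it tacitly assumes that no profile minimizes at two distinct velocities; you correctly isolate this as the crux and attack it through the Euler--Lagrange equations. Your analysis in fact exposes that strict monotonicity in (ii) fails in the degenerate case $L=I$ of Section 5, where $\mathcal{I}_{c}=(1-c^{2})\mathcal{I}_{0}$, so $M_{c}=M_{0}$ for every $c$ and $\alpha^{\pm}$ are constant --- a point the paper's proof passes over.

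One caveat on your own exclusion argument: $Lf=\kappa f$ with $0\neq f\in L^{2}$ only forces $\widehat{f}$ to be supported in the level set $\{\xi: l(\xi)=\kappa\}$, and for a merely smooth symbol this set can have positive measure without $l$ being a constant multiple of the symbol of $I$. So strictness in (ii) really requires an extra hypothesis (e.g.\ that $l$ is nowhere locally constant); what your additivity argument proves unconditionally is that $\alpha^{\pm}$ are non-decreasing and that $\alpha^{-}(b)\geq\alpha^{+}(a)$ for $a<b$, which is all that (iii)--(v) and the subsequent non-convexity discussion of $d(c)$ actually use.
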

\begin{proof}
    Let $\tilde{c}\in \left[ 0,c_{1}\right) $ such that $c\not=\tilde{c}$. Suppose that $\psi_{c}$ and $\psi_{\tilde{c}}$ are two  minimizers corresponding to  $c$ and $\tilde{c}$, respectively. Then we have
    \begin{eqnarray*}
        m_{1}(c) &=&\mathcal{I}_{c}(\psi_{c}) =\frac{1}{2}\left\Vert L^{{1/2}}B^{-{1/2}}\psi_{c}\right\Vert_{L^{2}}^{2}
                -\frac{c^{2}}{2} \left\Vert B^{-{1/2}}\psi_{c}\right\Vert^{2}_{L^{2}} \\
        &=& \mathcal{I}_{\tilde{c}}\left( \psi_{c}\right) +\frac{\tilde{c}^{2}-c^{2}}{2} \left\Vert B^{-{1/2}}\psi_{c}\right\Vert^{2}_{L^{2}} \\
        &>& m_{1}( \tilde{c}) +\frac{\tilde{c}^{2}-c^{2}}{2}\left\Vert B^{-{1/2}}\psi_{c}\right\Vert^{2}_{L^{2}}.
    \end{eqnarray*}%
    By symmetry we get
    \begin{displaymath}
        \frac{\tilde{c}^{2}-c^{2}}{2}  \left\Vert B^{-{1/2}}\psi_{c}\right\Vert^{2}_{L^{2}}
            < m_{1}(c)-m_{1}(\tilde{c}) <\frac{\tilde{c}^{2}-c^{2}}{2}  \left\Vert B^{-{1/2}}\psi_{\tilde{c}}\right\Vert^{2}_{L^{2}}  .
    \end{displaymath}
    This proves assertions (i) and (ii) of the lemma. It also implies that $m_{1}( c) $ is continuous. From (ii) we conclude that $\alpha^{+}(c) $ and $\alpha^{-}( c) $ are continuous except for countably many points in $[0, c_{1})$. For (iii) notice that the intervals $ \left[ \alpha^{-}( c),  \alpha^{+}( c) \right] $ have disjoint interior; this is possible only if $ \alpha^{-}( c) =\alpha^{+}( c) $ except for countably many $c$, implying (iii). Take some $c$ where $\alpha^{-} $ is continuous and $ \alpha^{-}( c) =\alpha^{+}( c) $. For $c> \tilde{c}$,
    \begin{displaymath}
    -\frac{\tilde{c}+c}{2}\left\Vert B^{-{1/2}}\psi_{c}\right\Vert^{2}_{L^{2}}
            <\frac{m_{1}( c) -m_{1}( \tilde{c}) }{c-\tilde{c}}
            <-\frac{\tilde{c}+c}{2} \left\Vert B^{-{1/2}}\psi_{\tilde{c}}\right\Vert^{2}_{L^{2}},
    \end{displaymath}%
    with  the reverse inequality holding for $c<\tilde{c}$. Then
   \begin{displaymath}
        m_{1}^{\prime }( c) =\lim_{\tilde{c}\rightarrow c}\frac{m_{1}(c) -m_{1}(\tilde{c}) }{c-\tilde{c}}
                            =-c \left\Vert B^{-{1/2}}\psi_{c}\right\Vert^{2}_{L^{2}}
    \end{displaymath}
    as was predicted in Lemma \ref{lem4.3}.  Then, by assertion (ii), $m_{1}^{\prime }( c)$, whenever it exists, is strictly decreasing for $c>0$.  At the points where $m_{1}^{\prime }( c)$ does not exist we have corners  with the slopes  decreasing as we pass through the corners. Thus $m_{1}(c)$ is strictly concave. We also note that $m_{1}^{\prime }(0)=0$.
\end{proof}
We obtain from (\ref{d3})  that $d^{\prime}(c)=2^{\frac{2}{p-1}}[m_{1}(c)]^{\frac{2}{p-1}}m_{1}^{\prime }(c)$. Both $m_{1}(c)$ and $m_{1}^{\prime}(c)$ are decreasing for $c>0$. Since $m_{1}(c)>0$,  $m_{1}^{\prime }(0)=0$ and $m_{1}^{\prime}(c)<0$ we observe that $d^{\prime}$ decreases when $c$ is near zero. This means that $d(c)$ will not be convex for small $c$. Therefore, the stability result of Theorem \ref{theo4.4} will not apply to traveling waves with small velocity.  In fact, following the approach in \cite{liu2}, we now show that there is instability by blow up in the case $c = 0$. To that end we state Theorem 3.5 of \cite{erbay2} in the following form:
\begin{theorem}\label{theo4.7}
    Let $U_{0} = (u_{0}, w_{0})$ with $u_{0} = (v_{0})_{x}$ for some $v_{0}\in L^{2}$. Suppose $\mathcal{E}(U_{0}) < d(0)$ and $2\mathcal{I}_{0}(u_{0})-\mathcal{Q}(u_{0})< 0$. Then the solution $U(t)$ of the Cauchy problem (\ref{sys1})-(\ref{sys3}) with initial data $U_{0}$ blows up in finite time.
\end{theorem}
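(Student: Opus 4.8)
Since the statement is, as noted in the text, a reformulation of Theorem~3.5 of \cite{erbay2}, one legitimate option is simply to invoke that result; what follows is the plan I would carry out to prove it directly. The overall strategy is the classical potential-well argument combined with a concavity (Levine-type) estimate. The first step is to show that the \emph{unstable set}
\begin{displaymath}
 \mathcal{A}=\bigl\{(u,w)\in X:~\mathcal{E}(u,w)<d(0),~~2\mathcal{I}_{0}(u)-\mathcal{Q}(u)<0\bigr\}
\end{displaymath}
is invariant under the flow of (\ref{sys1})--(\ref{sys3}). This rests on the variational identity $d(0)=\inf\{\mathcal{E}(U):~2\mathcal{I}_{0}(u)=\mathcal{Q}(u),~u\neq0\}$ (which follows from Lemma~\ref{lem4.2} with $c=0$ and the remark after it) together with the sharper fact, obtained by examining the scaling $\lambda\mapsto\mathcal{I}_{0}(\lambda u)-\tfrac{1}{p+1}\mathcal{Q}(\lambda u)$, that $\mathcal{I}_{0}(u)>\tfrac{p+1}{p-1}d(0)$ whenever $2\mathcal{I}_{0}(u)<\mathcal{Q}(u)$. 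Since $\mathcal{E}$ is conserved and $t\mapsto 2\mathcal{I}_{0}(u(t))-\mathcal{Q}(u(t))$ is continuous, a first crossing of the Nehari set $\{2\mathcal{I}_{0}(u)=\mathcal{Q}(u)\}$ would force $\mathcal{E}(U(t))\geq d(0)$, a contradiction; and the value $u(t)=0$ is excluded because Lemma~\ref{lem3.1} (with $c=0$) keeps $\|L^{1/2}B^{-1/2}u\|_{L^{2}}$ bounded away from $0$ on $\mathcal{A}$. Hence the solution issued from $U_{0}\in\mathcal{A}$ stays in $\mathcal{A}$ on its maximal interval $[0,T_{\max})$, and there $\mathcal{Q}(u(t))>2\mathcal{I}_{0}(u(t))>\tfrac{2(p+1)}{p-1}d(0)$.

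Next I would introduce the virial functional. Because $u_{0}=(v_{0})_{x}$ with $v_{0}\in L^{2}$ and $u_{t}=w_{x}$, the antiderivative $v(t)=v_{0}+\int_{0}^{t}w(s)\,ds$ satisfies $v_{x}=u$, $v_{t}=w$ and $v_{tt}=Lv_{xx}-B(|u|^{p-1}u)_{x}$, and $v(t)\in L^{2}$ on compact subintervals of $[0,T_{\max})$; consequently $F(t):=\|B^{-1/2}v(t)\|_{L^{2}}^{2}$ is finite and, for the solutions to which Theorems~\ref{theo2.1}--\ref{theo2.3} apply (this is where $p\geq[s]+1$ is used, cf. Remark~\ref{rem2.5}), twice differentiable. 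Differentiating twice, integrating by parts, and substituting the energy identity $\|B^{-1/2}w\|_{L^{2}}^{2}=2\mathcal{E}(U_{0})-2\mathcal{I}_{0}(u)+\tfrac{2}{p+1}\mathcal{Q}(u)$ gives
\begin{displaymath}
 F''(t)=4\,\|B^{-1/2}w(t)\|_{L^{2}}^{2}-4\,\mathcal{E}(U_{0})+\tfrac{2(p-1)}{p+1}\,\mathcal{Q}(u(t)).
\end{displaymath}
By the bounds from Step~1, $\tfrac{2(p-1)}{p+1}\mathcal{Q}(u(t))>4d(0)$, so $F''(t)\geq 4\bigl(d(0)-\mathcal{E}(U_{0})\bigr)>0$ uniformly in $t\in[0,T_{\max})$.

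To turn the strict convexity of $F$ into blow-up in \emph{finite} time (strict convexity alone is not enough), I would combine the formula for $F''$ with the Cauchy--Schwarz bound $(F'(t))^{2}=4\langle B^{-1/2}v,B^{-1/2}w\rangle^{2}\le 4F(t)\|B^{-1/2}w(t)\|_{L^{2}}^{2}$ and the same lower bounds on $\mathcal{I}_{0},\mathcal{Q}$ to derive a Levine-type differential inequality $F(t)F''(t)\geq(1+\nu)(F'(t))^{2}$ for a fixed $\nu\in(0,\tfrac{p-1}{4})$. Since $F''\geq 4(d(0)-\mathcal{E}(U_{0}))>0$, $F'$ is eventually positive, say on $[t_{1},T_{\max})$; there $F^{-\nu}$ is positive, concave (because $(F^{-\nu})''=-\nu F^{-\nu-2}(FF''-(1+\nu)(F')^{2})\leq0$) and has strictly negative derivative at $t_{1}$, so it must vanish at some finite $T_{*}\leq t_{1}+F(t_{1})/\bigl(\nu F'(t_{1})\bigr)$. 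Thus $F(t)\to\infty$ at a finite time, which forces $T_{\max}\le T_{*}<\infty$, and then the blow-up alternative of Remark~\ref{rem2.4} yields $\|u(t)\|_{H^{s_{0}}}+\|w(t)\|_{H^{s_{0}-\rho/2}}\to\infty$ as $t\to T_{\max}^{-}$.

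The step I expect to be the main obstacle is the rigorous handling of the antiderivative $v=\partial_{x}^{-1}u$: one must ensure that $v(t)$ is a genuine $L^{2}$ function on every compact time subinterval, so that $F$, $F'$, $F''$ and the Levine inequality are all legitimate — the high frequencies of $v$ are controlled by $u\in H^{s_{0}}$ since $s_{0}\geq r/2$, but the low frequencies need the separate estimate $\|v(t)\|_{L^{2}}\leq\|v_{0}\|_{L^{2}}+\int_{0}^{t}\|w(s)\|_{L^{2}}\,ds$ — and identifying the admissible range of the exponent $\nu$ is a related, smaller nuisance. These are precisely the technical points worked out in \cite{erbay2}.
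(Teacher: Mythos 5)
Your proposal is correct and coincides with what the paper does: Theorem \ref{theo4.7} is not proved in this paper at all but is quoted verbatim from Theorem 3.5 of \cite{erbay2}, and the direct argument you sketch (invariance of the unstable set via the Nehari/potential-well characterization of $d(0)$, the lower bound $\mathcal{I}_{0}(u)>\tfrac{p+1}{p-1}d(0)$, the virial functional $\Vert B^{-1/2}v\Vert_{L^{2}}^{2}$ with $v=\partial_{x}^{-1}u$, and Levine's concavity lemma) is exactly the proof given there and is reproduced almost line for line in the paper's own proof of Theorem \ref{theo5.1}(ii), including the treatment of the antiderivative $v$ and the choice $1+\nu=\tfrac{p+3}{4}$.
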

Using Theorem \ref{theo4.7}, we now prove that the set of standing waves, $\mathcal{G}_{0}$, is unstable by blow-up. As we will need to solve a Cauchy problem in the proof, we assume that the restriction in Remark \ref{rem2.5},   namely $p\geq [s_{0}]+1=[{r\over 2}+{\rho \over 2}]+1$, holds below.
\begin{theorem}\label{theo4.8}
    Let $\epsilon > 0$ and $\Phi_{0}\in \mathcal{G}_{0}$. There exists initial data $U_{0}\in X$  with $\Vert U_{0}-\Phi_{0}\Vert_X <\epsilon$ for which the solution $U(t)$ of the Cauchy problem (\ref{sys1})-(\ref{sys3}) with initial data $U_{0}$ blows up in finite time.
\end{theorem}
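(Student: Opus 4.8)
The plan is to apply the blow‑up criterion of Theorem \ref{theo4.7} to an amplitude dilation of the standing wave. Since $\Phi_{0}=(\phi_{0},0)$, the natural candidate is $(\lambda\phi_{0},0)$ with $\lambda>1$. Pairing the standing‑wave equation $LB^{-1}\phi_{0}=|\phi_{0}|^{p-1}\phi_{0}$ (that is, (\ref{ooo}) with $c=0$) with $\phi_{0}$ gives $2\mathcal{I}_{0}(\phi_{0})=\mathcal{Q}(\phi_{0})$. Combining this with the homogeneity of $\mathcal{I}_{0}$ and $\mathcal{Q}$ (degrees $2$ and $p+1$) and with $d(0)=\tfrac{p-1}{2(p+1)}\mathcal{Q}(\phi_{0})$, which follows from (\ref{d3}), one gets
\[
    2\mathcal{I}_{0}(\lambda\phi_{0})-\mathcal{Q}(\lambda\phi_{0})=(\lambda^{2}-\lambda^{p+1})\mathcal{Q}(\phi_{0})<0,\qquad \lambda>1,
\]
and, with $w_{0}=0$,
\[
    \mathcal{E}(\lambda\phi_{0},0)-d(0)=\bigl[h(\lambda)-h(1)\bigr]\mathcal{Q}(\phi_{0}),\qquad h(\lambda):=\tfrac{\lambda^{2}}{2}-\tfrac{\lambda^{p+1}}{p+1},
\]
where $h'(\lambda)=\lambda-\lambda^{p}<0$ for $\lambda>1$, so $\mathcal{E}(\lambda\phi_{0},0)<d(0)$. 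Since $\Vert(\lambda\phi_{0},0)-\Phi_{0}\Vert_{X}=(\lambda-1)\Vert\phi_{0}\Vert_{H^{s_{0}}}$, this candidate can be made as $X$‑close to $\Phi_{0}$ as we wish by taking $\lambda$ near $1$.

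The one genuine difficulty is that Theorem \ref{theo4.7} requires $u_{0}=(v_{0})_{x}$ for some $v_{0}\in L^{2}$, whereas $\lambda\phi_{0}$ need not be of this form: $\widehat{\phi_{0}}(0)=\int\phi_{0}\,dx$ is in general nonzero (it is positive in the Boussinesq case), so $\widehat{\phi_{0}}(\xi)/\xi\notin L^{2}$. I would repair this by excising a neighbourhood of the origin in frequency: pick an even cut‑off $\chi_{\delta}\in C^{\infty}(\mathbb{R})$ with $\chi_{\delta}(\xi)=0$ for $|\xi|\leq\delta/2$ and $\chi_{\delta}(\xi)=1$ for $|\xi|\geq\delta$, and set $u_{0}=u_{0}^{\lambda,\delta}:=\lambda\,\mathcal{F}^{-1}(\chi_{\delta}\widehat{\phi_{0}})$ and $w_{0}=0$. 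Then $u_{0}$ is real‑valued (by evenness of $\chi_{\delta}$ and conjugate symmetry of $\widehat{\phi_{0}}$), and $\widehat{u_{0}}(\xi)/(i\xi)$ vanishes for $|\xi|\leq\delta/2$ and is bounded by $(2\lambda/\delta)|\widehat{\phi_{0}}(\xi)|$ elsewhere, hence lies in $L^{2}$; thus $u_{0}=(v_{0})_{x}$ with $v_{0}:=\mathcal{F}^{-1}\bigl(\widehat{u_{0}}/(i\xi)\bigr)\in L^{2}$. Moreover $u_{0}^{\lambda,\delta}\to\lambda\phi_{0}$ in $H^{s_{0}}$ as $\delta\to0^{+}$, by dominated convergence with dominating function $(1+\xi^{2})^{s_{0}}|\widehat{\phi_{0}}(\xi)|^{2}$.

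To conclude, given $\epsilon>0$ I would first fix $\lambda>1$ with $(\lambda-1)\Vert\phi_{0}\Vert_{H^{s_{0}}}<\epsilon/2$, so that the two strict inequalities above hold for $\lambda\phi_{0}$. Since $\mathcal{Q}$ is continuous on $H^{s_{0}}$ (through $H^{s_{0}}\hookrightarrow L^{p+1}$) and $\mathcal{I}_{0}$ is continuous on $H^{s_{0}}$ (because $L^{1/2}B^{-1/2}$ is bounded from $H^{s_{0}}$ to $L^{2}$), these strict inequalities, together with the bound $\Vert u_{0}^{\lambda,\delta}-\phi_{0}\Vert_{H^{s_{0}}}<\epsilon$, persist once $\delta$ is small enough. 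The data $U_{0}=(u_{0}^{\lambda,\delta},0)\in X$ then satisfies $\Vert U_{0}-\Phi_{0}\Vert_{X}<\epsilon$, the Cauchy problem is locally well posed by the standing assumption $p\geq[s_{0}]+1$ (Remark \ref{rem2.5}), and all hypotheses of Theorem \ref{theo4.7} are met, so the corresponding solution blows up in finite time. I expect every step but the frequency truncation to be routine; the point to watch is keeping the perturbation simultaneously in the range of $\partial_{x}$ on $L^{2}$ and $H^{s_{0}}$‑close to $\phi_{0}$, which the cut‑off $\chi_{\delta}$ arranges.
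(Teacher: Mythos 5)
Your proposal is correct and follows essentially the same route as the paper: dilate the standing wave by $\lambda>1$ to place it strictly inside the blow-up set of Theorem \ref{theo4.7}, then repair the requirement $u_{0}=(v_{0})_{x}$ with $v_{0}\in L^{2}$ by cutting off low frequencies and invoking continuity of $\mathcal{E}$, $\mathcal{I}_{0}$, $\mathcal{Q}$ on $H^{s_{0}}$. The only (immaterial) difference is that you use a smooth frequency cut-off $\chi_{\delta}$ where the paper uses the sharp truncation $\widehat{v_{0}}(\xi)=\widehat{\phi_{0}}(\xi)/(i\xi)$ for $|\xi|\geq h$, zero otherwise.
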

\begin{proof}
    First, for $\lambda > 1$, consider $\lambda \Phi_{0}=\left(\lambda \phi_{0}, 0\right)$. Then
     \begin{eqnarray*}
     \mathcal{E}(\lambda \Phi_{0})&=&\lambda^{2} \mathcal{I}_{0}(\phi_{0}) -{\lambda^{p+1}\over {p+1}}\mathcal{Q}(\phi_{0}) \\
                            &=&\left( {\lambda^{2}\over 2}- {\lambda^{p+1}\over {p+1}} \right)\mathcal{Q}(\phi_{0})  \\
                                 &< &  \left( {1\over 2}- {1 \over {p+1}} \right)\mathcal{Q}(\phi_{0})=d(0).
     \end{eqnarray*}
     Also
    \begin{displaymath}
       2\mathcal{I}_{0}(\lambda \phi_{0})-\mathcal{Q}(\lambda \phi_{0}) =2\lambda^{2}\mathcal{I}_{0}(\phi_{0})-\lambda^{p+1}  \mathcal{Q}(\phi_{0})
                        = (\lambda^{2}-\lambda^{p+1})  \mathcal{Q}(\phi_{0}) <0.
    \end{displaymath}
    Next, as in  \cite{liu2}, we  define $v_{0}$ via Fourier transform:
    \begin{displaymath}
       \widehat{v_{0}}(\xi)={1\over {i\xi}}\widehat{\phi_{0}}(\xi) ~~ \mbox{for}~~ |\xi | \geq h, ~~~~~\mbox{and}~~~~~
       \widehat{v_{0}}(\xi)=0 ~~ \mbox{for}~~ |\xi | < h.
    \end{displaymath}
    Then $v_{0}\in L^{2}$. In fact, since $\phi_{0}\in H^{s_{0}}$, we have  $v_{0}\in H^{s_{0}+1}$   and thus $(v_{0})_{x}\in H^{s_{0}}$. For any $\epsilon>0$ we can choose $h$ sufficiently small such that $\Vert (v_{0})_{x}-\phi_{0}\Vert_{H^{s_{0}}}< \epsilon $. For $\lambda>1$ we let $U_{0} =\left(\lambda (v_{0})_{x}, 0\right)$. Since $\mathcal{E}$, $\mathcal{I}_{0}$, and $\mathcal{Q}$ are continuous on $H^{s_{0}}$ for $\lambda$ sufficiently close to 1, we get  $\Vert U_{0}-\Phi_{0}\Vert_X <\epsilon$,   $\mathcal{E}(U_{0}) < d (0)$ and $2\mathcal{I}_{0}(u_{0})-\mathcal{Q}(u_{0}) < 0$. But then $U_{0}$  satisfies the conditions of Theorem \ref{theo4.7}, and hence $U(t)$ will blow up in finite time.
\end{proof}
The next example illustrates the application of the above procedure to the Boussinesq equation. \\
{\it Example 1. (The   Boussinesq Equation)}
    If we set $L = I-\partial_{x}^{2}$ and $B = I$, we end up with  (\ref{gen-bouss}) and consequently with (\ref{gen-travel}) for which the solitary waves exist for $c^{2}<1$. Combining these with (\ref{d3}), after a straightforward calculation, we obtain the corresponding function $d(c)$ in the form
    \begin{displaymath}
        d (c) =d(0)(1-c^{2})^{{p+3}\over{2(p-1)}}
    \end{displaymath}
    where  $d(0)={1\over 2}\left({{p-1}\over {p+1}}\right)\left(\Vert \psi \Vert_{L^{2}}^{2}+\Vert \psi^{\prime} \Vert_{L^{2}}^{2}\right)$. Here the function $\psi$ satisfies $\psi^{\prime\prime}-\psi+| \psi |^{p-1}\psi=0$.
    Then we have
    \begin{displaymath}
      d^{\prime\prime} (c)=4 d(0) \frac{p+3}{(p-1)^{2}}(1-c^{2})^{\frac{7-3p}{2(p-1)}}\left(c^{2}-\frac{p-1}{4}\right).
    \end{displaymath}
    So,  when
    \begin{displaymath}
        \frac{p-1}{4}<  c^{2}<1   ~~~~\mbox{and}~~~~  1<p<5,
    \end{displaymath}
    $d(c)$ is convex and by Theorem \ref{theo4.4} the solitary wave solutions of (\ref{gen-bouss}) are orbitally stable.  This is exactly the same result which was obtained by Bona and Sachs \cite{bona2} for the stability of solitary wave solutions of (\ref{gen-bouss}).   On the other hand, Theorem \ref{theo4.4} is not applicable for small values of $c$ since the convexity assumption  is not valid. But  Theorem \ref{theo4.8} tells us that, for suitable initial data close to the standing wave, solutions of (\ref{gen-bouss}) blow up in finite time. For a more general case,  Liu \cite{liu1}  proved that the solitary waves of (\ref{gen-bouss}) are orbitally unstable in suitable function spaces if either
    \begin{displaymath}
        c^{2}\leq \frac{p-1}{4} ~~~\mbox{and}~~~ 1 < p< 5,
    \end{displaymath}
    or
    \begin{displaymath}
         c^{2} <1 ~~~\mbox{and}~~~ p\geq 5.
    \end{displaymath}
    As we have already mentioned,  Liu \cite{liu2} showed that  for $c = 0$,  the solitary waves are strongly unstable by blow-up, that is, certain solutions with initial data sufficiently close to $\phi_{0}$ blow up in finite time.   This result was  extended to the case of a small nonzero wave velocity  in \cite{liu3} and to the case of
    \begin{displaymath}
     0<  c^{2} < \frac{p-1}{2(p+1)}
    \end{displaymath}
    in \cite{liu-ohta}. For a recent discussion of these issues in the case of non-power nonlinearities, we refer the reader to \cite{howing}.

We now consider the double dispersion equation as a special case. \\
{\it Example 2. (The  Double Dispersion Equation)}
    When $L = (I-a_{1}\partial_{x}^{2})^{-1}(I-a_{2}\partial_{x}^{2})$ and $B = (I-a_{1}\partial_{x}^{2})^{-1}$ for two positive constants $a_{1}$ and $a_{2}$, (\ref{nonlocal}) reduces to (\ref{gen-doubly}). Since  $\rho=0$,  both regimes defined by (\ref{case-gen}) and (\ref{case-imp}) occur for the double dispersion equation. That is, solitary waves exist either for  $c^{2} <1 $ and $g(u)=-|u|^{p-1}u$ (i.e., the case $\rho \geq 0$ in Subsection 3.1  ) or for $c^{2} >1 $ and $g(u)=|u|^{p-1}u$ (i.e., the case $\rho \leq 0$ in Subsection 3.2  ).  Regarding the stability properties of solitary waves,  the comments made for the first regime are also valid for the double dispersion equation. We refer the reader to \cite{wang} for a strong instability result obtained in the first regime for that equation.

We conclude this section with the following remark regarding the case $\rho \leq 0$.
\begin{remark}\label{rem4.9}
    When $\rho \leq 0$, although $\phi_{c}$ is a minimizer for $\mathcal{J}_{c}$ (or a maximizer for $\mathcal{I}_{c}$) under a certain constraint, a variant of Lemma \ref{lem4.2} does not hold. In fact, at $\phi_{c}$ we have a saddle point of $\mathcal{E} (U) + c\mathcal{M} (U)$. This can be observed easily from $\mathcal{E}(U) +c\mathcal{M}(U)=\frac{1}{2}\left\Vert B^{-1/2}\left( w+cu\right) \right\Vert_{L^{2}}^{2}-\mathcal{J}_{c}(u) -\frac{1}{p+1}\mathcal{Q}(u)$. This is the main reason that the method used above for the case $\rho \geq 0$ will not work for the present case. In fact the case $\rho \leq 0$ corresponds to the "bad case" in \cite{stubbe}.  We now briefly indicate the results currently available in the literature for the the improved Boussinesq  equation which provides a prototype equation for the case  $\rho \leq 0$. Pego and Weinstein \cite{pego1} proved that solitary waves of (\ref{imp-boussinesq}) are linearly unstable in $H^{1}\times H^{2}$ if
    \begin{displaymath}
        1 < c^{2} <\frac{3(p-1)}{2(p+1)} ~~~\mbox{and}~~~ p>5.
    \end{displaymath}
     When $p=2$,  the linear instability of periodic traveling waves has  recently been shown  in \cite{pava2}.
\end{remark}
In the next section we study stability properties of the traveling waves for the case $L=I$.

\setcounter{equation}{0}
\section{An example: A regularized Klein-Gordon-type equation}
\noindent

 The previous section shows that orbital stability depends on the convexity of $d(c)$. In particular cases, for instance, in the case of the Boussinesq-type equations considered in the previous section, $d(c)$ can be computed explicitly using either the explicit form of the traveling wave solution $\phi_{c}$ or a Pohozaev-type identity, but both of these approaches will not work for the general case we deal with. In other words, we cannot get $d(c)$ explicitly  unless we  make further  assumptions on $L$ and/or $B$.  In this section we consider the particular case $L = I$ for which $\rho=0$ and $c_{1}=c_{2}=1$. Note that $s_{0}=s_{0}-{\rho \over 2}\equiv {r\over 2}  $. We will restrict our attention to the regime $c^{2}<1$  and $g(u)=-|u|^{p-1}u$. Taking $L=I$ allows us to compute $d(c)$  explicitly and hence determine the stability interval. Moreover, we are able to improve the instability result given in Theorem \ref{theo4.8} to get an almost complete characterization for stability of solitary waves in the first regime. When $L=I$, (\ref{nonlocal}) reduces to
\begin{equation}
    u_{tt}-u_{xx}=B(-|u|^{p-1}u)_{xx},    \label{klein}
\end{equation}%
 with the general pseudo-differential operator $B$ of order $-r$. Due to the smoothing effect of $B$, (\ref{klein}) can be considered as a regularized Klein-Gordon-type equation. Note that due to Theorem \ref{theo4.4} we need to take $r> 1$. We now give a full characterization of the orbital stability/instability of  traveling waves for (\ref{klein}) below.
  As we will need to solve a Cauchy problem in the proof of assertion (ii), we again assume that the restriction in Remark \ref{rem2.5},   namely $p\geq [s_{0}]+1=[{r\over 2}]+1$, holds below.
\begin{theorem}\label{theo5.1}
    Let $L=I$, $r>1$, $c^{2}<1$  and $g(u)=-|u|^{p-1}u$. Then
    \begin{enumerate}
    \item[(i)] For \ $c^{2}>\frac{p-1}{p+3}$, the traveling wave solutions of (\ref{sys1})-(\ref{sys3}) with velocity $c$ are orbitally stable.
    \item[(ii)] For $c^{2}<\frac{p-1}{p+3}$, the traveling wave solutions of (\ref{sys1})-(\ref{sys3}) with velocity $c$ are unstable by blow up; namely,  for any $\epsilon >0$ and $\Phi_{c}\in \mathcal{G}_{c}$ there exists initial data $U_{0} \in X$ with $\left\Vert U_{0} -\Phi_{c}\right\Vert _{X}<\epsilon $ for which the solution $U(t)$ of the Cauchy problem (\ref{sys1})-(\ref{sys3}) with initial data  $U_{0}$, blows up in finite time.
    \end{enumerate}
\end{theorem}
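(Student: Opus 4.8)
The plan is to treat the two assertions by quite different means: (i) by writing $d(c)$ in closed form, which is possible precisely because $L=I$, and then invoking Theorem~\ref{theo4.4}; (ii) by upgrading the $c=0$ blow-up criterion of Theorem~\ref{theo4.7} to nonzero velocity. For (i), the key observation is that $L=I$ (so $c_1=c_2=1$) collapses the functional (\ref{ic0}) to $\mathcal{I}_c(\psi)=\tfrac12\|B^{-1/2}\psi\|_{L^2}^2-\tfrac{c^2}2\|B^{-1/2}\psi\|_{L^2}^2=(1-c^2)\,\mathcal{I}_0(\psi)$. Hence the variational level (\ref{m1}) scales as $m_1(c)=(1-c^2)\,m_1(0)$, and (\ref{d3}) gives
\[
  d(c)=d(0)\,(1-c^2)^{\frac{p+1}{p-1}},\qquad c^2<1.
\]
Setting $q=\tfrac{p+1}{p-1}$, two differentiations yield $d''(c)=-2q\,d(0)\,(1-c^2)^{q-2}\bigl[1-(2q-1)c^2\bigr]$, so (since $d(0)>0$) $d''(c)>0$ precisely when $c^2>\tfrac1{2q-1}=\tfrac{p-1}{p+3}$. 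Since $\rho=0$, $r>1$ (hence $r+\tfrac\rho2\ge1$ and $(\rho,r)\ne(0,1)$), and $d\in C^\infty((-1,1))$, Theorem~\ref{theo4.4} applies on any subinterval of $\{\tfrac{p-1}{p+3}<c^2<1\}$, giving the $X$-stability asserted in (i).

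For (ii), the plan is first to prove a velocity-$c$ analogue of Theorem~\ref{theo4.7}: \emph{if $u_0=(v_0)_x$ with $v_0\in L^2$, $\mathcal{E}(U_0)+c\,\mathcal{M}(U_0)<d(c)$ and $2\mathcal{I}_c(u_0)-\mathcal{Q}(u_0)<0$, then $U(t)$ blows up in finite time}; then to exhibit, for any $\Phi_c=(\phi_c,-c\phi_c)\in\mathcal{G}_c$ and any $\epsilon>0$, admissible data within $\epsilon$ of $\Phi_c$ meeting these hypotheses. For the data I would dilate: for $\lambda>1$, using (\ref{ooo}) with $L=I$ one has $\mathcal{Q}(\phi_c)=(1-c^2)\|B^{-1/2}\phi_c\|_{L^2}^2=2\mathcal{I}_c(\phi_c)$, whence
\[
  \mathcal{E}(\lambda\Phi_c)+c\,\mathcal{M}(\lambda\Phi_c)=\Bigl(\tfrac{\lambda^2}2-\tfrac{\lambda^{p+1}}{p+1}\Bigr)\mathcal{Q}(\phi_c),\qquad
  2\mathcal{I}_c(\lambda\phi_c)-\mathcal{Q}(\lambda\phi_c)=(\lambda^2-\lambda^{p+1})\,\mathcal{Q}(\phi_c).
\]
The function $\lambda\mapsto(\tfrac{\lambda^2}2-\tfrac{\lambda^{p+1}}{p+1})\mathcal{Q}(\phi_c)$ is strictly decreasing on $[1,\infty)$ and equals $d(c)$ at $\lambda=1$ (by (\ref{d3})), so both expressions sit strictly below their $\lambda=1$ values for every $\lambda>1$. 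Exactly as in the proof of Theorem~\ref{theo4.8} one then replaces $\lambda\phi_c$ by $\lambda(v_0)_x$ with $\widehat{v_0}(\xi)=(i\xi)^{-1}\widehat{\phi_c}(\xi)$ cut off near $\xi=0$: this keeps $v_0\in L^2$, makes the $X$-distance to $\Phi_c$ less than $\epsilon$ for $\lambda$ close to $1$ and the cutoff fine, and — by continuity of $\mathcal{E},\mathcal{M},\mathcal{I}_c,\mathcal{Q}$ on $H^{s_0}$ — preserves the two strict inequalities; the running assumption $p\ge[s_0]+1$ is what makes the Cauchy problem for this data solvable.

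For the blow-up criterion itself I would argue in two steps, the structure $L=I$ entering crucially. First, the potential-well exterior $\{2\mathcal{I}_c(u)-\mathcal{Q}(u)<0,\ \mathcal{E}(u,w)+c\mathcal{M}(u,w)<d(c)\}$ is invariant: if $2\mathcal{I}_c(u(t_1))=\mathcal{Q}(u(t_1))$ at a first time $t_1$, with necessarily $u(t_1)\ne0$, then writing $\mathcal{E}+c\mathcal{M}=\tfrac12\|B^{-1/2}(w+cu)\|_{L^2}^2+\mathcal{I}_c(u)-\tfrac1{p+1}\mathcal{Q}(u)$ and invoking the variational characterization of $\phi_c$ (the constraint displayed just after Lemma~\ref{lem4.2}) gives $\mathcal{E}(U(t_1))+c\mathcal{M}(U(t_1))\ge\tfrac{p-1}{p+1}\mathcal{I}_c(\phi_c)=d(c)$, contradicting conservation; the same characterization also yields the quantitative bound $\mathcal{I}_c(u(t))>\tfrac{p+1}{p-1}d(c)$ throughout. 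Second, assuming the solution global, put $F(t)=\|B^{-1/2}v(t)\|_{L^2}^2$ with $v_x=u$, $v_t=w$ (legitimate because $u_0=(v_0)_x$); using $v_{tt}-v_{xx}=-B(|v_x|^{p-1}v_x)_x$ together with conservation of $\mathcal{E}$ one obtains the virial identity
\[
  F''(t)=4\mathcal{E}-4\|B^{-1/2}v_x\|_{L^2}^2+\tfrac{2p+6}{p+1}\|v_x\|_{L^{p+1}}^{p+1}.
\]
Combining this with the Cauchy--Schwarz bound $(F')^2\le4F\|B^{-1/2}v_t\|_{L^2}^2$, the bound $\mathcal{E}+c\mathcal{M}<d(c)$ and the well-exterior estimate, the goal is a differential inequality $FF''-\tfrac{p+3}4(F')^2\ge0$, forcing $F^{-(p-1)/4}$ to be positive and concave for all $t$ — impossible — hence blow-up.

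\textbf{The main obstacle} is closing this last inequality on the entire range $c^2<\tfrac{p-1}{p+3}$. The crude Cauchy--Schwarz step is far from sharp near the orbit — there $F'\equiv0$ while $\|B^{-1/2}v_t\|_{L^2}^2\ne0$, and the estimate it would require, $\mathcal{E}\le\tfrac{p-1}{2(p+1)}\|B^{-1/2}v_x\|_{L^2}^2$, actually fails identically at $\Phi_c$ when $c\ne0$ and is an equality exactly at $c=0$, which is why the simpler Theorem~\ref{theo4.7} sufficed there. I therefore expect one must run the refined concavity argument with a modified functional $F(t)=\|B^{-1/2}v(t)\|_{L^2}^2+\beta(t+t_0)^2$ and optimize over $\beta,t_0\ge0$; it should be precisely this optimization, together with the closed form $d(c)=d(0)(1-c^2)^{(p+1)/(p-1)}$ available only for $L=I$, that reproduces the exponent $\tfrac{p+3}4$ and the threshold $\tfrac{p-1}{p+3}$, making the stable and unstable regimes complementary up to the single value $c^2=\tfrac{p-1}{p+3}$.
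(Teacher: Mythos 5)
Part (i) of your proposal is correct and coincides with the paper's argument: $L=I$ gives $\mathcal{I}_c=(1-c^2)\mathcal{I}_0$, hence $d(c)=d(0)(1-c^2)^{(p+1)/(p-1)}$, and the convexity computation together with Theorem \ref{theo4.4} yields stability exactly for $c^2>\frac{p-1}{p+3}$. Your construction of the perturbed data in part (ii) (dilation by $\lambda>1$, the Fourier cutoff producing $v_0\in L^2$ with $(v_0)_x$ close to $\phi_c$, and the invariance of the well exterior $\Sigma_-(c)$, which is Lemma \ref{lem5.2} quoted from \cite{erbay2}) also matches the paper.

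The genuine gap is the one you yourself flag: you do not close the concavity inequality for $c\neq 0$, and the fix you propose is unlikely to work on the full range $c^2<\frac{p-1}{p+3}$. The modified functional $F(t)=\|B^{-1/2}v\|_{L^2}^2+\beta(t+t_0)^2$ with optimization over $\beta,t_0$ is precisely the device used by Liu--Ohta--Todorova for the Boussinesq equation, and there it yields blow-up only for $c^2<\frac{p-1}{2(p+1)}$, which is strictly smaller than $\frac{p-1}{p+3}$; so that route does not make the two regimes complementary. The paper's resolution of your "main obstacle" is different and is the essential idea you are missing: since $u=v_x$, one has $\langle B^{-1/2}v,B^{-1/2}u\rangle=\frac12\int \partial_x(B^{-1/2}v)^2\,dx=0$, hence
\begin{equation*}
H'(t)=\langle B^{-1/2}v,B^{-1/2}w\rangle=\langle B^{-1/2}v,B^{-1/2}(w+cu)\rangle,
\end{equation*}
so Cauchy--Schwarz is applied against $w+cu$ rather than against $v_t=w$. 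The quantity $w+cu$ vanishes identically on the orbit, so this estimate is \emph{not} crude there. Correspondingly, $H''$ is rewritten so that the kinetic term appears as $\frac{p+3}{2}\|B^{-1/2}(w+cu)\|_{L^2}^2$, plus $\bigl(p+1-\frac{2(1+c^2)}{1-c^2}\bigr)\mathcal{I}_c(u)$ (whose coefficient is positive exactly when $c^2<\frac{p-1}{p+3}$ --- this is where the threshold enters, not through any optimization), plus $-2c\mathcal{M}(u,w)-(p+1)[\mathcal{E}+c\mathcal{M}]$. The term $-2c\mathcal{M}$ is controlled because the initial data are engineered to satisfy the extra lower bound $-c\mathcal{M}(U_0)>\frac{2c^2}{1-c^2}\frac{p+1}{p-1}\,d(c)$, which is then propagated by conservation of momentum; together with Lemma \ref{lem5.3} ($\mathcal{I}_c(u)>\frac{p+1}{p-1}d(c)$ on the well exterior) and $\mathcal{E}+c\mathcal{M}=d(c)-\delta$, the terms involving $d(c)$ cancel exactly and one gets $H''>\frac{p+3}{2}\|B^{-1/2}(w+cu)\|_{L^2}^2+(p+1)\delta$, whence $HH''-\frac{p+3}{4}(H')^2\geq(p+1)\delta H\geq0$ and Levine's lemma applies. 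Without the orthogonality trick and the additional momentum bound on the data, your argument cannot reach the stated threshold.
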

We first note from  (\ref{ic0}) that, for $L=I$
    \begin{displaymath}
            \mathcal{I}_{c}(u) =\frac{1}{2}(1-c^{2})\left\Vert B^{-{1/2}}u\right\Vert_{L^{2}}=(1-c^{2})\mathcal{I}_{0}(u).
    \end{displaymath}
So all the minimizers and hence $\phi_{c}$ traveling wave solutions are certain multiples of $\phi_{0}$, namely $\phi_{c}=(1-c^{2})^{\frac{1}{p-1}}\phi_{0}$.  From (\ref{d3}) we have $d(c)=d(0)(1-c^{2})^{\frac{p+1}{p-1}}$. Having disposed of this preliminary step, we can now easily prove the first assertion of Theorem \ref{theo5.1}. A straightforward computation gives
    \begin{displaymath}
        d^{\prime \prime }(c) = d(0)\frac{2( p+1)}{(p-1)^{2}}(1-c^{2})^{\frac{3-p}{p-1}}\left((p+3) c^{2}-p+1\right).
    \end{displaymath}%
Since $d\left( c\right) $ is strictly convex for $c^{2}>\frac{p-1}{p+3}$, it follows from Theorem \ref{theo4.4} that traveling waves are orbitally stable for $c^{2}>\frac{p-1}{p+3}$. This completes the proof of assertion (i) of Theorem \ref{theo5.1}.

The rest of this section will be devoted to the proof of assertion (ii) of Theorem \ref{theo5.1}. That is, we will prove that, when $c^{2}<\frac{p-1}{p+3}$, we can find initial data arbitrarily close to traveling wave solutions such that the solution of the corresponding Cauchy problem blows up in finite time.  Before proving the assertion, we need some preliminary definitions and results. Let us  first define a set $\Sigma _{-}(c)$ as follows.
\begin{displaymath}
    \Sigma _{-}(c)=\{(u,w)\in H^{s_{0}}\times H^{s_{0}-{\frac{\rho }{2}}}:
                    \quad \mathcal{E}(u,w)+c\mathcal{M}(u,w)<d(c),\quad 2\mathcal{I}_{c}(u)-\mathcal{Q}(u)<0\}.
\end{displaymath}%
The following lemma from \cite{erbay2}  shows that,  for  $L=I$ and $\ c^{2}<1$, the set $\Sigma _{-}(c)$ is  invariant under the flow generated by (\ref{sys1})-(\ref{sys3}).
\begin{lemma} \label{lem5.2} (Lemma 3.2 of \cite{erbay2})
    Suppose $(u_{0},w_{0})\in \Sigma _{-}(c)$, and let $(u(t),w(t))$ be the solution of the Cauchy problem (\ref{sys1})-(\ref{sys3}) with initial data $(u_{0},w_{0})$. Then $(u(t),w(t))\in \Sigma_{-}(c)$ for $0<t<T_{\max }$.
\end{lemma}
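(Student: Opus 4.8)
The plan is a continuity--conservation argument of the type standard for invariant sets built from a constrained functional. First I would observe that the first defining inequality of $\Sigma_-(c)$ propagates for free: since $\mathcal{E}$ and $\mathcal{M}$ are conserved along the flow (see (\ref{energy})--(\ref{momentum})), one has $\mathcal{E}(u(t),w(t))+c\mathcal{M}(u(t),w(t))=\mathcal{E}(u_0,w_0)+c\mathcal{M}(u_0,w_0)<d(c)$ for every $t\in[0,T_{\max})$. So the entire content of the lemma is that the sign condition $2\mathcal{I}_c(u(t))-\mathcal{Q}(u(t))<0$ is preserved.

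I would prove this by contradiction. By Theorem \ref{theo2.3} the solution satisfies $u\in C([0,T_{\max}),H^{s_0})$, and on $H^{s_0}$ both $\mathcal{I}_c$ (a quadratic form which, for $c^2<1=c_1^2$, is equivalent to $\Vert\cdot\Vert_{H^{s_0}}^2$ by Lemma \ref{lem3.1}) and $\mathcal{Q}(u)=\Vert u\Vert_{L^{p+1}}^{p+1}$ (finite and continuous via the Sobolev embedding $H^{s_0}\subset L^{p+1}$) depend continuously on $u$; hence $t\mapsto 2\mathcal{I}_c(u(t))-\mathcal{Q}(u(t))$ is continuous. If the conclusion failed there would be a first time $t_1\in(0,T_{\max})$ at which $2\mathcal{I}_c(u(t_1))-\mathcal{Q}(u(t_1))=0$, while $2\mathcal{I}_c(u(t))-\mathcal{Q}(u(t))<0$ for $0\le t<t_1$.

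The crucial step is to rule out $u(t_1)=0$. For $0\le t<t_1$, combining the coercivity $2\mathcal{I}_c(u(t))\ge 2\gamma_1\Vert u(t)\Vert_{H^{s_0}}^2$ with the Sobolev bound $\mathcal{Q}(u(t))\le C\Vert u(t)\Vert_{H^{s_0}}^{p+1}$ gives $2\gamma_1\Vert u(t)\Vert_{H^{s_0}}^2<C\Vert u(t)\Vert_{H^{s_0}}^{p+1}$, hence $\Vert u(t)\Vert_{H^{s_0}}\ge(2\gamma_1/C)^{1/(p-1)}>0$; letting $t\uparrow t_1$ and using continuity in $H^{s_0}$ yields $u(t_1)\neq 0$. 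Therefore $(u(t_1),w(t_1))$ lies on the constraint manifold $\{U:2\mathcal{I}_c(u)-\mathcal{Q}(u)=0,\ u\neq 0\}$, over which --- as recorded in the observation following Lemma \ref{lem4.2}, and using $\phi_c=(1-c^2)^{1/(p-1)}\phi_0$ in the present $L=I$ case --- the functional $\mathcal{E}+c\mathcal{M}$ attains its minimum $d(c)$. Thus $\mathcal{E}(u(t_1),w(t_1))+c\mathcal{M}(u(t_1),w(t_1))\ge d(c)$, contradicting the conservation identity of the first paragraph. Hence $2\mathcal{I}_c(u(t))-\mathcal{Q}(u(t))<0$ on all of $[0,T_{\max})$, i.e. $\Sigma_-(c)$ is invariant.

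The only genuine obstacle is the non-vanishing of $u(t_1)$: without it the point could sit at the origin, where $2\mathcal{I}_c(u)-\mathcal{Q}(u)=0$ holds trivially and the variational lower bound $d(c)$ is unavailable; this is why the strict sign in the definition of $\Sigma_-(c)$ and the coercivity of $\mathcal{I}_c$ are both essential. Everything else is bookkeeping with the two conservation laws and the continuity of $\mathcal{I}_c$ and $\mathcal{Q}$ on $H^{s_0}$; one should, however, be careful to confirm that the infimum of $\mathcal{E}+c\mathcal{M}$ over the signed constraint used in defining $\Sigma_-(c)$ coincides with the value $d(c)$ built from the minimizing sequence of Section~3, which is precisely the content of the observation after Lemma \ref{lem4.2}.
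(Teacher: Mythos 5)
Your argument is correct, but note that the paper itself does not prove this lemma at all: it is imported verbatim from Lemma 3.2 of \cite{erbay2}, so there is no in-paper proof to compare against. What you have written is a sound, self-contained reconstruction of the standard invariant-set argument: conservation of $\mathcal{E}$ and $\mathcal{M}$ handles the first inequality; continuity of $t\mapsto 2\mathcal{I}_c(u(t))-\mathcal{Q}(u(t))$ (valid since $u\in C([0,T_{\max}),H^{s_0})$ by Theorem \ref{theo2.3}, $\mathcal{I}_c$ is a bounded quadratic form by Lemma \ref{lem3.1} with $c_1=1$, and $\mathcal{Q}$ is continuous by Sobolev embedding) gives a first crossing time $t_1$; and your coercivity estimate $\Vert u(t)\Vert_{H^{s_0}}\geq (2\gamma_1/C)^{1/(p-1)}$ correctly excludes the degenerate possibility $u(t_1)=0$, which is indeed the one place the argument could fail.

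The only point worth tightening is your reliance on the unproved observation following Lemma \ref{lem4.2} (that $\Phi_c$ minimizes $\mathcal{E}+c\mathcal{M}$ on the constraint set $2\mathcal{I}_c(u)=\mathcal{Q}(u)$, $u\neq 0$), which the paper also only cites to \cite{erbay2}. You can avoid this entirely: what you actually need is only the lower bound, and it follows from the homogeneity computation in Lemma \ref{lem5.3} run with non-strict inequalities. Namely, $2\mathcal{I}_c(u(t_1))=\mathcal{Q}(u(t_1))$ with $u(t_1)\neq 0$ forces $\mathcal{I}_c(u(t_1))\geq \frac{p+1}{p-1}d(c)$ via $[m_1(c)]^{(p+1)/2}\leq [\mathcal{I}_c(u)]^{(p+1)/2}/\mathcal{Q}(u)$ and (\ref{d3}), whence
\begin{equation*}
\mathcal{E}+c\mathcal{M}\;\geq\;\mathcal{I}_c(u(t_1))-\tfrac{1}{p+1}\mathcal{Q}(u(t_1))\;=\;\tfrac{p-1}{p+1}\,\mathcal{I}_c(u(t_1))\;\geq\;d(c),
\end{equation*}
contradicting conservation. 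With that substitution your proof is complete and uses only facts established in this paper.
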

We also need the following lemma:
\begin{lemma}  \label{lem5.3}
    Suppose $2\mathcal{I}_{c}(u)-\mathcal{Q}(u)<0.$ Then $\ \frac{p+1}{p-1}d(c)<\mathcal{I}_{c}(u)$.
\end{lemma}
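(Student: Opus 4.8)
The plan is to exploit the scaling homogeneity of the functionals $\mathcal{I}_c$ and $\mathcal{Q}$, exactly in the spirit of how the variational problem $m_1(c)$ was set up. Recall that $\mathcal{I}_c$ is homogeneous of degree $2$ and $\mathcal{Q}$ of degree $p+1$, and that from \eqref{d3} we have the identity $d(c) = \left(\frac{p-1}{p+1}\right)\mathcal{I}_c(\phi_c)$, where $\phi_c$ is the traveling wave, equivalently $\mathcal{I}_c(\phi_c) = \tfrac12\mathcal{Q}(\phi_c)$. Given $u$ with $2\mathcal{I}_c(u) - \mathcal{Q}(u) < 0$, the first step is to note $u \neq 0$, so $\mathcal{Q}(u) > 0$ and $\mathcal{I}_c(u) > 0$ (the latter by Lemma \ref{lem3.1}), and to rescale: set $u_\lambda = \lambda u$ for $\lambda > 0$, so that $\mathcal{I}_c(u_\lambda) = \lambda^2 \mathcal{I}_c(u)$ and $\mathcal{Q}(u_\lambda) = \lambda^{p+1}\mathcal{Q}(u)$.

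Next I would choose $\lambda_0$ so that $u_{\lambda_0}$ satisfies the constraint defining $m_1(c)$ after the appropriate normalization — concretely, pick $\lambda_0 = \left(2\mathcal{I}_c(u)/\mathcal{Q}(u)\right)^{1/(p-1)}$, which is $< 1$ precisely because of the hypothesis $2\mathcal{I}_c(u) < \mathcal{Q}(u)$. With this choice one computes directly that $2\mathcal{I}_c(u_{\lambda_0}) = \mathcal{Q}(u_{\lambda_0})$, i.e. $u_{\lambda_0}$ lies on the natural constraint surface, so that $\left(\frac{p-1}{p+1}\right)\mathcal{I}_c(u_{\lambda_0}) \geq d(c)$ by the characterization of $d(c)$ as the infimum of $\mathcal{E}+c\mathcal{M}$ (equivalently, of $\left(\frac{p-1}{p+1}\right)\mathcal{I}_c$) over that surface — this is where Lemma \ref{lem4.2} and \eqref{d3} are invoked. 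Then $\mathcal{I}_c(u) = \lambda_0^{-2}\mathcal{I}_c(u_{\lambda_0}) \geq \lambda_0^{-2}\cdot\frac{p+1}{p-1}d(c)$, and since $\lambda_0 < 1$ gives $\lambda_0^{-2} > 1$, we conclude $\mathcal{I}_c(u) > \frac{p+1}{p-1}d(c)$, which is the claim.

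The only point requiring a little care — and the one I would expect to be the main obstacle — is justifying that $\left(\frac{p-1}{p+1}\right)\mathcal{I}_c(v) \geq d(c)$ for \emph{every} $v$ on the surface $2\mathcal{I}_c(v) = \mathcal{Q}(v)$, not merely for the minimizer $\phi_c$. This follows from the remark after Lemma \ref{lem4.2}: $\Phi_c = (\phi_c, -c\phi_c)$ is a minimizer of $\mathcal{E}(U) + c\mathcal{M}(U)$ subject to exactly the constraint $2\mathcal{I}_c(u) - \mathcal{Q}(u) = 0$, $u \neq 0$, and on that surface $\mathcal{E}(U) + c\mathcal{M}(U) \geq \mathcal{I}_c(u) - \tfrac1{p+1}\mathcal{Q}(u) = \left(1 - \tfrac{2}{p+1}\right)\mathcal{I}_c(u) = \left(\frac{p-1}{p+1}\right)\mathcal{I}_c(u)$, with equality of the value $d(c)$ at $\Phi_c$. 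One should also check the strictness of the final inequality: since the hypothesis is the strict inequality $2\mathcal{I}_c(u) < \mathcal{Q}(u)$, we get $\lambda_0 < 1$ strictly, hence $\lambda_0^{-2} > 1$ strictly, so $\mathcal{I}_c(u) > \frac{p+1}{p-1}d(c)$ as stated. (If one only had $\leq$ in the hypothesis the conclusion would weaken to $\geq$, but the blow-up application needs the strict form, which is what the rescaling delivers.)
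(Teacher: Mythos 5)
Your argument is correct and, at bottom, rests on the same homogeneity computation as the paper's proof, just normalized differently. The paper works with the constraint $\mathcal{Q}=1$: from the definition \eqref{m1} of $m_{1}(c)$ and the homogeneities of $\mathcal{I}_{c}$ (degree $2$) and $\mathcal{Q}$ (degree $p+1$) it gets $[m_{1}(c)]^{\frac{p+1}{2}}\mathcal{Q}(u)\leq[\mathcal{I}_{c}(u)]^{\frac{p+1}{2}}$ for $u\neq 0$, inserts the hypothesis $\mathcal{Q}(u)>2\mathcal{I}_{c}(u)$, and reads off the claim from \eqref{d3}. You instead rescale $u$ onto the set $\{2\mathcal{I}_{c}=\mathcal{Q}\}$ with a factor $\lambda_{0}<1$ and use that $\frac{p-1}{p+1}\mathcal{I}_{c}\geq d(c)$ there; undoing the scaling gives the strict inequality. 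The one step you should tighten is precisely that lower bound on the constraint set: as written you justify it by appealing to the remark after Lemma \ref{lem4.2}, namely that $\Phi_{c}$ minimizes $\mathcal{E}+c\mathcal{M}$ subject to $2\mathcal{I}_{c}(u)-\mathcal{Q}(u)=0$, but the paper only asserts that remark with a citation to earlier work, and invoking the minimality of $\Phi_{c}$ on that set in order to deduce the inequality on that set is mildly circular. The fix is one line and needs nothing beyond \eqref{m1}: for $v\neq 0$ with $2\mathcal{I}_{c}(v)=\mathcal{Q}(v)$, homogeneity gives $\mathcal{I}_{c}(v)\geq m_{1}(c)\,\mathcal{Q}(v)^{2/(p+1)}=m_{1}(c)\,\bigl(2\mathcal{I}_{c}(v)\bigr)^{2/(p+1)}$, hence $\mathcal{I}_{c}(v)\geq 2^{\frac{2}{p-1}}[m_{1}(c)]^{\frac{p+1}{p-1}}=\frac{p+1}{p-1}d(c)$ by \eqref{d3} --- which is exactly the inequality the paper's proof manipulates directly. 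With that substitution your proof is complete and self-contained; the remaining details (that $u\neq 0$, that $\mathcal{I}_{c}(u)>0$ via Lemma \ref{lem3.1}, the choice of $\lambda_{0}$, and the strictness coming from $\lambda_{0}^{-2}>1$ together with $d(c)>0$) are all handled correctly.
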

\begin{proof}
    Recall from (\ref{m1}) that $m_{1}\left( c\right) =\inf \left\{ \mathcal{I}_{c}(u):\mathcal{Q}(u)=1\right\}$. By homogeneity one gets
    \begin{displaymath}
       [m_{1}(c)]^{\frac{p+1}{2}}\leq \frac{[\mathcal{I}_{c}(u)]^{\frac{p+1}{2}}}{\mathcal{Q}(u)}
    \end{displaymath}
    whenever $u\not=0.$ If $2\mathcal{I}_{c}(u)-\mathcal{Q}(u)<0$ then
    \begin{displaymath}
    2[m_{1}(c)]^{\frac{p+1}{2}}\mathcal{I}_{c}(u)
        <  [m_{1}(c)]^{\frac{p+1}{2}}\mathcal{Q}(u)\leq [\mathcal{I}_{c}(u)]^{\frac{p+1}{2}}.
    \end{displaymath}
    Combining this with (\ref{d3}) yields
    \begin{displaymath}
    \frac{p+1}{p-1}d(c)=2^{\frac{2}{p-1}}[m_{1}(c)]^{\frac{p+1}{p-1}}<\mathcal{I}_{c}(u).
    \end{displaymath}
\end{proof}
We are now ready to prove the second assertion of Theorem \ref{theo5.1}:
\begin{proof}
Let $c^{2}<\frac{p-1}{p+3}$ and $\Phi_{c}=\left( \phi _{c},-c\phi _{c}\right) \in \mathcal{G}_{c}$.  We will follow the approach in Theorem
\ref{theo4.8} to construct initial data arbitrarily close to $\Phi_{c}$ such that the solution of the corresponding Cauchy problem blows up in finite time. For $\lambda >1$ consider $\lambda \Phi _{c}=\left( \lambda \phi_{c},-c\lambda \phi _{c}\right) $. Then,  just as in the proof of Theorem \ref{theo4.8},  we obtain
    \begin{eqnarray*}
    \mathcal{E}\left( \lambda\Phi_{c}\right) +c\mathcal{M}\left(\lambda \Phi_{c}\right)
            &=&\lambda^{2} \mathcal{I}_{c}(\phi_{c})-\frac{\lambda^{p+1}}{p+1}\mathcal{Q}(\phi_{c}) \\
            &=& \left( {\lambda^{2}\over 2}- {\lambda^{p+1}\over {p+1}} \right)\mathcal{Q}(\phi_{c})  \\
            &< &  \left( {1\over 2}- {1 \over {p+1}} \right)\mathcal{Q}(\phi_{c})=d(c),
    \end{eqnarray*}
    and
    \begin{displaymath}
    2\mathcal{I}_{c}(\lambda \phi_{c})-\mathcal{Q}(\lambda \phi_{c})
                    =2\lambda^{2}\mathcal{I}_{c}(\phi_{c})-\lambda^{p+1}\mathcal{Q}(\phi_{c})=(\lambda^{2}-\lambda^{p+1})\mathcal{Q}(\phi_{c}) <0.
    \end{displaymath}
    These two results show that  $\lambda \Phi _{c}=\left( \lambda \phi_{c}, -c\lambda \phi_{c}\right) \in \Sigma_{-}(c)$. Moreover,
    \begin{eqnarray*}
            -c\mathcal{M}\left( \lambda \Phi _{c}\right)
            &=& -c\lambda^{2}\mathcal{M}\left( \Phi_{c}\right) =c^{2}\lambda^{2}\left\Vert B^{-1/2}\phi_{c}\right\Vert_{L^{2}}^{2}
                        =\frac{2c^{2}\lambda^{2}}{1-c^{2}}\mathcal{I}_{c}\left( \phi_{c}\right) \\
            &>& \frac{2c^{2}}{1-c^{2}}\left(\frac{p+1}{p-1}\right)d(c)
    \end{eqnarray*}%
    where we have used (\ref{d3}). Next, as in the proof of Theorem \ref{theo4.8},  we choose some $v_{0}\in H^{s_{0}+1}$ such that $\left\Vert \left( v_{0}\right)_{x}-\phi_{c}\right\Vert _{H^{s_{0}}}<\epsilon $. For $\lambda >1$ we let  $U_{0}=\left( u_{0}, w_{0}\right) =\left( \lambda ( v_{0})_{x},-c\lambda (v_{0})_{x}\right) $. Since $\mathcal{E}$, $\mathcal{I}_{c}$, and $\mathcal{Q}$ are continuous on $H^{s_{0}}$ for $\lambda$ sufficiently close to 1, one gets: $\Vert U_{0}-\Phi_{c}\Vert_{X} <\epsilon$, $U_{0} \in \Sigma_{-}(c)$ and
    \begin{equation}
    -c\mathcal{M}\left( U_{0}\right)  > \frac{2c^{2}}{1-c^{2}}\left(\frac{p+1}{p-1}\right)d(c).  \label{estim}
    \end{equation}
    Let $U(t) =(u(t), w(t)) $ be the solution of the Cauchy problem (\ref{sys1})-(\ref{sys3}) with $L=I$. The rest of the proof is quite similar to the one of Theorem 3.5 of \cite{erbay2}. We then have $u=v_{x}$ with
    \begin{displaymath}
    v(., t)=\lambda v_{0}+\int_{0}^{t}  w(.,\tau)d\tau.
    \end{displaymath}
    With an easy computation this yields
    \begin{displaymath}
    \left\Vert B^{-1/2}v(t)\right\Vert_{L^{2}} \leq \lambda \left\Vert  B^{-1/2}v_{0}\right\Vert_{L^{2}}
            +\int_{0}^{t}  \left\Vert B^{-1/2}w(\tau)\right\Vert_{L^{2}}d\tau.
    \end{displaymath}
    This inequality tells us that $\left\Vert B^{-1/2}w(t)\right\Vert_{L^{2}}$,  equivalently $\left\Vert w(t)\right\Vert_{H^{r/2}}$, and thus $U(t)$  blows up in finite time whenever the functional $H(t)=\frac{1}{2}\left\Vert B^{-1/2}v(t)\right\Vert _{L^{2}}^{2}$ does so.  Therefore the proof is completed by showing that $H(t)$ blows up in finite time. Thanks to Levine's Lemma \cite{levine}. It says that if $H^{\prime }\left(t_{0}\right) >0$ for some $t_{0}>0$, and $\ HH^{\prime \prime }-\left( 1+\nu \right)\left( H^{\prime }\right) ^{2}\geq 0$ for some $\nu >0$  then  $H\left(t\right) $ will blow up in finite time. We proceed to show that
    \begin{eqnarray*}
    H^{\prime  }(t) &=& \left\langle B^{-1/2}v,B^{-1/2}v_{t}\right\rangle , \\
    H^{\prime \prime }(t)
        &=& \left\Vert B^{-1/2}v_{t}\right\Vert_{L^{2}}^{2}+\left\langle B^{-1/2}v,B^{-1/2}v_{tt}\right\rangle  \\
        &=& \left\Vert B^{-1/2}v_{t}\right\Vert_{L^{2}}^{2}+\int vB^{-1}v_{tt} dx  \\
        &=& \left\Vert B^{-1/2}v_{t}\right\Vert_{L^{2}}^{2}+\int v \left(B^{-1}v_{xx}-\left(|v_{x}|^{p-1}v_{x}\right)_{x}\right) dx  \\
        &=&  \left\Vert B^{-1/2}v_{t}\right\Vert_{L^{2}}^{2}-\int v_{x} \left(B^{-1}v_{x}-\left(|v_{x}|^{p-1}v_{x}\right)\right) dx \\
        &=&\left\Vert B^{-1/2}w\right\Vert_{L^{2}}^{2}-\left\Vert B^{-1/2}u\right\Vert _{L^{2}}^{2}+\mathcal{Q}(u) \\
        &=&\Vert B^{-1/2}(w+cu)\Vert^{2}_{L^{2}}-\left( 1+c^{2}\right) \left\Vert B^{-1/2}u\right\Vert _{L^{2}}^{2}-2c\mathcal{M}(u,w)+\mathcal{Q}(u) \\
        &=&\Vert B^{-1/2}(w+cu)\Vert^{2}_{L^{2}}-\frac{2\left( 1+c^{2}\right) }{1-c^{2}}\mathcal{I}_{c}\left( u\right) -2c\mathcal{M}(u,w)+\mathcal{Q}(u).
    \end{eqnarray*}%
    By (\ref{energy}), (\ref{momentum}), (\ref{ic0}) and (\ref{momentum-t}) we have
    \begin{displaymath}
    Q(u)=\frac{p+1}{2}\left\Vert B^{-1/2}(w+cu)\right\Vert^{2}_{L^{2}}+(p+1)\mathcal{I}_{c}(u)-(p+1)[\mathcal{E}(u,w)+c\mathcal{M}(u,w)].
    \end{displaymath}%
    Substituting this result into the above equation  we get
    \begin{eqnarray}
     H^{\prime \prime }\left( t\right)
    &=& \frac{p+3}{2}\left \Vert B^{-1/2}(w+cu)\right\Vert^{2}_{L^{2}} +\left( p+1-\frac{2( 1+c^{2}) }{1-c^{2}}\right) \mathcal{I}_{c}(u)   \nonumber \\
    && ~~~~~ -2c\mathcal{M}(u,w)-(p+1)[\mathcal{E}(u,w)+c\mathcal{M}(u,w)].  \label{hprime}
    \end{eqnarray}
    Note that the coefficient of $\mathcal{I}_{c}\left( u\right) $  is positive since $c^{2}<\frac{p-1}{p+3}$.  So the estimate of Lemma \ref{lem5.3}, i.e. $ \frac{p+1}{p-1}d(c)<\mathcal{I}_{c}(u)$,   can be employed above.  Furthermore, using the conservation laws we get
    \begin{displaymath}
    \mathcal{E}\left( U\right) +c\mathcal{M}(U)=\mathcal{E}\left(U_{0}\right) +c\mathcal{M}(U_{0})=d(c)-\delta <d(c)
    \end{displaymath}%
    for some $\delta >0$, and by (\ref{estim})
    \begin{displaymath}
    -c\mathcal{M}( U) =-c\mathcal{M}( U_{0}) > \frac{2c^{2}}{1-c^{2}}\left(\frac{p+1}{p-1}\right)d(c).
    \end{displaymath}
    Combining these with (\ref{hprime}) we obtain
    \begin{eqnarray*}
    H^{\prime \prime }(t)
    &>&\frac{p+3}{2}\left\Vert B^{-1/2}(w+cu)\right\Vert_{L^{2}}^{2}+\left( p+1-\frac{2\left( 1+c^{2}\right) }{1-c^{2}}\right) \left(\frac{p+1}{p-1}\right)d(c)  \\
    & & ~~~~~    +\frac{4c^{2}}{ 1-c^{2} }\left(\frac{p+1}{p-1}\right)d(c)-(p+1)d(c) +\left( p+1\right) \delta  \\
    &=&\frac{p+3}{2}\left\Vert B^{-1/2}(w+cu)\right\Vert_{L^{2}}^{2}+ \left( p+1-\frac{2\left(1+c^{2}\right) }{1-c^{2}} \right. \\
    & & ~~~~~  \left. +\frac{4c^{2}}{ 1-c^{2} }\right) \left(\frac{p+1}{p-1}\right)d(c)-(p+1)d( c) +( p+1) \delta  \\
    &=&\frac{p+3}{2}\left\Vert B^{-1/2}(w+cu)\right\Vert_{L^{2}}^{2}+\left( p+1\right) \delta .
    \end{eqnarray*}%
    So, $H^{\prime \prime }\left( t\right) >\left( p+1\right) \delta $  which in turn implies that $H^{\prime }\left( t_{0}\right) >0$ for some $ t_{0}>0$.  Thus, one of the two conditions of Levine's Lemma holds. What is left is to show that the second condition is also satisfied. Note that as $D_{x}$ commutes with $B^{-1/2}$ we have
    \begin{displaymath}
    \langle B^{-1/2}v,B^{-1/2}u\rangle
        =\int \left(B^{-1/2}v\right)\left( B^{-1/2}v\right) _{x}dx=\frac{1}{2}\int \frac{\partial }{\partial x}\left( B^{-1/2}v\right) ^{2}dx=0.
    \end{displaymath}%
    Since
    \begin{displaymath}
    \langle B^{-1/2}v,B^{-1/2}w\rangle =\langle B^{-1/2}v,B^{-1/2}\left(w+cu\right) \rangle
    \end{displaymath}
    we have
    \begin{displaymath}
    \left( H^{\prime }\left( t\right) \right) ^{2}
        =\left( \langle B^{-1/2}v,B^{-1/2}( w+cu) \rangle \right)^{2}\leq \Vert B^{-1/2}v\Vert _{L^2}^{2}\Vert B^{-1/2}( w+cu) \Vert _{L^2}^{2}.
    \end{displaymath}
    Finally, with \ $1+\nu =\frac{p+3}{4}$ \ we have
   \begin{displaymath}
    H\left( t\right) H^{\prime \prime }(t)-\frac{p+3}{4}\left( H^{\prime }\left(t\right) \right)^{2}
        \geq \frac{p+1}{2}\Vert B^{-1/2}v\Vert _{L^2}^{2} \delta =(p+1) H(t)\delta \geq 0
    \end{displaymath}
    This completes the proof of assertion (ii) of Theorem \ref{theo5.1}
\end{proof}
\vspace*{20pt}

\noindent
{\bf Acknowledgement}: This work has been supported by the Scientific and Technological Research Council of Turkey (TUBITAK) under the project TBAG-110R002.

\end{document}